\let\uml\"
\title{Barrier methods for Minimal Submanifolds in the Gibbons-Hawking Ansatz} 
\author{Federico Trinca}  
\address{Mathematical Institute, University of Oxford, Woodstock Road, Oxford, OX2 6GG,
United Kingdom} 
\email{Federico.Trinca@maths.ox.ac.uk}
\keywords{Minimal Submanifolds, Gibbons--Hawking ansatz, Hyperk\"{a}hler manifolds, Barrier Methods}
\subjclass[2020]{53C40, 53C38}
\newtheorem{thm}{Theorem}[section]
\newtheorem{corollary}{Corollary}[section]
\newtheorem{proposition}{Proposition}[section]
\newtheorem{lemma}{Lemma}[section]
\theoremstyle{definition}
\newtheorem{example}{Example}[section]
\newtheorem{definition}{Definition}[section]
\newtheorem{remark}{Remark}[section]
\newcommand{\R}{\mathbb{R}}
\newcommand{\C}{\mathbb{C}}
\newcommand{\Z}{\mathbb{Z}}
\newcommand{\Tr}{\mathrm{Tr}}
\newcommand{\la}{\langle}
\newcommand{\ra}{\rangle}
\newcommand{\av}[1]{\lvert #1 \rvert}
\newcommand{\mR}{\mathcal{R}}
\newcommand{\mA}{\mathcal{A}}
\newcommand{\mO}{\mathcal{O}}
\DeclareMathOperator{\Length}{Length}
\DeclareMathOperator{\Vol}{Vol}
\newcommand{\Ric}{\mathrm{Ric}}
\renewcommand{\H}{\mathrm{H}}
\begin{document} 
 
\begin{abstract}  
      We describe a barrier argument for compact minimal submanifolds in the multi-Eguchi--Hanson and in the multi-Taub--NUT spaces, which are hyperk\"{a}hler 4-manifolds given by the Gibbons--Hawking ansatz. This approach is used to obtain results towards a classification of compact minimal submanifolds in this setting.  
We also prove a converse of Tsai and Wang's result that relates the strong stability condition to the convexity of the distance function. 
\end{abstract} 
\maketitle
\tableofcontents

%%%% **** The text of the paper starts here **** %%%%
\section{Introduction}
In a Riemannian manifold, we say that a submanifold\footnote{Submanifolds and integral varifolds will always be considered without boundary.} is minimal if it is a critical point of the volume functional. As minimal submanifolds are not only of great geometric interest per se, but also encode information on the ambient manifold, these objects are widely studied.

A way to probe compact minimal submanifolds is by using ambient $k$-convex functions. A function $f$ is (strictly) $k$-convex if the sum of the smallest $k$ eigenvalues of $\mathrm{Hess} f$ is everywhere non-negative (positive). Such a function, when restricted to a compact minimal $k$-submanifold $\Sigma$, is subharmonic and hence forces $\Sigma$ to be contained in the set where $f$ is not strict. Given a smooth open domain $\Omega$, we say that $\partial \Omega$ is $k$-convex if the sum of the smallest $k$ eigenvalues of the second fundamental form, pointing inward, is everywhere positive. In this setting, Harvey and Lawson \cite[Theorem 5.7]{HaLa12} constructed a $k$-convex function in the domain, which is strict near $\partial\Omega$. This implies that compact minimal $k$-submanifolds contained in $\Omega$ cannot be tangent to $\partial \Omega$. Hence, $\partial\Omega$ provides a barrier for compact minimal $k$-submanifolds. The parallel with the generalized avoidance principle for the mean curvature flow, which is the gradient flow for the volume functional, is clear \cite[Theorem 14.1]{Wh15}. Moreover, this allow us to extend our results on minimal submanifolds to integral varifolds\footnote{The reader not familiar with the notion of (stationary) integral varifold can read (minimal) "singular" submanifold instead.}.

A hyperk\"{a}hler 4-manifold is a Riemannian manifold $(X,g)$ that is equipped with an $S^2$ of k\"{a}hler structures. This forces the holonomy group of $X$ to be contained in $\mathrm{Sp(1)}\cong\mathrm{SU(2)}$. Hence, hyperk\"{a}hler 4-manifolds are also Calabi--Yau, and so Ricci-flat. Since complex submanifolds of K\"{a}hler manifolds are homologically area minimizing by Wirtinger's inequality, hyperk\"{a}hler 4-manifolds have a distinguished class of minimal submanifolds, namely the complex curves with respect to one of the compatible complex structures. It is easy to see that these complex curves are also special Lagrangians for a Calabi--Yau structure on $X$. Special Lagrangians are not only of great geometric interest, but they also play a crucial role in theoretical physics, particularly in Mirror Symmetry. 

The Gibbons--Hawking ansatz, first introduced in \cite{GH78}, provides a way to construct a family of simply connected hyperk\"{a}hler 4-manifolds with a tri-Hamiltonian circle action. In this family, we have, for example, the Euclidean $\mathbb{R}^4$, the Eguchi--Hanson space, and the Taub--NUT space. As a generalization of these, the Gibbons--Hawking ansatz also gives infinitely many ALE and ALF spaces called multi-Eguchi--Hanson and multi-Taub--NUT respectively, which are characterized by a distribution of points in $\mathbb{R}^3$. Indeed, these are the total space of an $\mathrm{U(1)}$-bundle over $\mathbb{R}^3$ minus finitely many points $\{p_i\}_{i=1}^k$. We denote by $U$ this punctured $\mathbb{R}^3$, parametrized by $\{x_i\}_{i=1}^3$, and by $\pi$ be the projection map of this bundle. The Euclidean and the Taub--NUT space correspond, respectively, to the one-point multi-Eguchi--Hanson and to the one-point multi-Taub--NUT space. The Eguchi--Hanson space correspond to the two-point multi-Eguchi--Hanson case \cite{GH78,Pr79}. %It is well-known that these spaces are simply connected and only have non-trivial second homology. 

In this paper, we study the $k$-convexity of natural sets and functions on the multi-Eguchi--Hanson and multi-Taub--NUT spaces, which are all the complete simply connected hyperk\"{a}hler 4-manifolds with a tri-Hamiltonian circle action and finite topology \cite{Bie99}. The barriers that we obtain are used towards a classification of compact minimal submanifolds. Moreover, we show that, apart from the one and the two point case, the natural competitors do not provide, not even locally, a complete description of such objects.

\subsection*{Main results} In the setting above, Lotay and Oliveira \cite{LO20} studied minimal submanifolds that are invariant under the circle action. In particular, they proved the existence of circle-invariant closed geodesics, and that circle-invariant compact minimal surfaces correspond to straight lines connecting two of the characterizing points in $U$. These are also all the compact complex submanifolds. 

It is natural to ask whether all compact minimal submanifolds are circle-invariant, or are contained in one. Indeed, it is well-known that this vacuously holds in the Euclidean $\mathbb{R}^4$ and the Taub--NUT space. A way to prove it is by noticing that circle-invariant spheres around the singular point of $\phi$ are convex with respect to its interior \cite[Appendix B]{LO20}. Moreover, Tsai and Wang \cite[Theorem 5.2]{TW18} proved that the claim is also true in the Eguchi--Hanson case. We use, as barriers, all the circle-invariant ellipsoids of foci the singular points of $\phi$ to extend Tsai and Wang result in the two-point multi-Taub--NUT case.

\begin{thm}\label{IntroTheoremTwoPoints}
Let $(X,g)$ be a multi-Eguchi--Hanson or a multi-Taub--NUT space with two singular points of $\phi$. Then, compact minimal submanifolds are $S^1$-invariant or are contained in the unique $S^1$-invariant compact minimal surface.
\end{thm}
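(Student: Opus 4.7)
The plan is to use the $S^1$-invariant preimages of confocal ellipsoids as barriers. Denote the two singular points of $\phi$ by $p_1, p_2$ and set $f(x):=|x-p_1|+|x-p_2|$ on $\mathbb{R}^3$, $F:=f\circ\pi$ on $X$. For every $c>|p_1-p_2|$, the level set $\{f=c\}\subset\mathbb{R}^3$ is a smooth ellipsoid with foci $p_1,p_2$ avoiding the singular points of $\phi$, so $\Sigma_c:=\{F=c\}$ is a smooth $S^1$-invariant hypersurface of $X$ bounding a compact region $\Omega_c:=\{F\leq c\}$ which contains $p_1,p_2$ as smooth interior points of $X$. As $c\downarrow|p_1-p_2|$, the $\Omega_c$ decrease to $\pi^{-1}(\overline{p_1p_2})$, which by \cite{LO20} is the unique $S^1$-invariant compact minimal surface in $X$ and is topologically a $2$-sphere.

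The main technical step is to show that each $\Sigma_c$ is strictly $1$-convex with respect to its interior $\Omega_c$, and in particular strictly $2$-convex. In the Gibbons-Hawking form $g=\phi\,g_{\mathbb{R}^3}+\phi^{-1}\eta^2$, the vertical $S^1$-direction together with horizontal lifts of directions tangent to $\{f=c\}\subset\mathbb{R}^3$ provide a natural orthonormal frame on $\Sigma_c$. I would compute the inward-pointing shape operator of $\Sigma_c$ in this frame, using the fact that $\pi$ is a conformal Riemannian submersion with conformal factor $\phi$. The horizontal block is controlled by the strict convexity of the Euclidean ellipsoid $\{f=c\}$, which follows from a direct Hessian computation, while the vertical eigenvalue and the off-diagonal terms are governed by the normal derivative of $\phi$ and the O'Neill tensor of the submersion. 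The goal is to show that the sign of the Euclidean convexity survives these corrections at every point of $\Sigma_c$, for every admissible $c$.

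Once strict $1$-convexity of $\Sigma_c$ is established, Harvey-Lawson's theorem \cite[Theorem 5.7]{HaLa12} produces strictly $k$-convex functions on $\Omega_c$ (for $k=1,2$) which are strict near $\Sigma_c$. Any compact minimal submanifold $\Sigma\subset X$ has $F|_\Sigma$ bounded, hence $\Sigma\subset\Omega_c$ for some $c$; by the barrier principle, $\Sigma$ cannot touch $\Sigma_c$, so shrinking $c\downarrow|p_1-p_2|$ forces $\Sigma\subset\pi^{-1}(\overline{p_1p_2})$. If $\dim\Sigma=2$ then $\Sigma$ coincides with the $S^1$-invariant $2$-sphere, which is itself $S^1$-invariant; if $\dim\Sigma=1$ then $\Sigma$ is a closed curve inside this $2$-sphere. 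Either way the conclusion of the theorem is obtained, and the argument also extends to stationary integral varifolds since Harvey-Lawson's barrier applies in that generality.

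The main obstacle is the strict convexity calculation for $\Sigma_c$ in the Gibbons-Hawking metric: one has to verify that the Gibbons-Hawking corrections never reverse the sign of the smallest principal curvature inherited from the Euclidean ellipsoid. This is also where the hypothesis of exactly two singular points enters essentially, since the ellipsoids $\{f\leq c\}$ must avoid any further singularity for the construction to apply; with three or more singular points no confocal ellipsoid on a single pair can enclose them all, anticipating the author's later claim that in that regime the natural competitors no longer classify compact minimal submanifolds even locally.
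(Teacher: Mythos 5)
Your proposal follows essentially the same route as the paper: the proof of Theorem \ref{Theoremnnonexistence2P} uses exactly your family of circle-invariant hypersurfaces over confocal Euclidean ellipsoids with foci the two singular points of $\phi$, establishes their strict convexity in Lemma \ref{EllipsoidBarrierLemma} via the second fundamental form formulas of Lemma \ref{SFF S^1 invariant} and Sylvester's criterion in prolate spheroidal coordinates, and concludes with the Harvey--Lawson barrier argument and the shrinking family, also in the varifold setting. The one step you leave as a stated goal---that the Gibbons--Hawking corrections never reverse the Euclidean convexity---is precisely what the paper verifies by explicit computation (first for $m=0$, then absorbing the extra $m>0$ terms), so your plan is correct and coincides with the paper's proof.
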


In particular, we proved that, in the multi-Eguchi--Hanson and multi-Taub--NUT spaces with \emph{at most} 2 singular points of $\phi$, compact minimal submanifolds are circle-invariant, or are contained in one.
When we consider at least 3 singular points of $\phi$, we observe that the natural generalization of the sets used above, i.e. ellipsoids with multiple foci, cannot work. 
Instead, we show that circle-invariant spheres and circle-invariant cylinders are $3$-convex for big enough radii. Moreover, for a weaker constant, spheres are also $1$-convex. Unfortunately, this is not true in the cylindrical case. We deduce that compact minimal submanifolds must lie in a certain compact domain containing the characterizing points of the ambient manifold. In the collinear case, this is enough to show the non-existence of compact minimal hypersurfaces. More precisely, we have:

\begin{thm} \label{IntroHypersurfaceSphere}
Let $(X,g)$ be a multi-Eguchi--Hanson or a multi-Taub--NUT space. Compact minimal hypersurfaces need to be contained in $\pi^{-1} (\{x\in U: \av{x}_{\mathbb{R}^3}\leq4/3 \max_i \av{p_i}_{\mathbb{R}^3}\})$. Moreover, there are no compact minimal hypersurfaces contained in $\pi^{-1} (\{x\in U: \av{x}_{\mathbb{R}^3}< \min \{\av{p_i}_{\mathbb{R}^3}: \av{p_i}_{\mathbb{R}^3}>0\}\}).$
\end{thm}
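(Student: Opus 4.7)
The plan is to use the circle-invariant spheres $\pi^{-1}(S_R)$, with $S_R := \{x \in \mathbb{R}^3 : \av{x}_{\mathbb{R}^3} = R\}$, as barriers and apply the Harvey--Lawson principle \cite[Theorem 5.7]{HaLa12}. Both assertions of the theorem will follow from a single claim: the hypersurface $\pi^{-1}(S_R)$ is $3$-convex with respect to the inward normal pointing toward the origin of $\mathbb{R}^3$ whenever $R < \min\{\av{p_i}_{\mathbb{R}^3} : \av{p_i}_{\mathbb{R}^3} > 0\}$ or $R > \tfrac{4}{3}\max_i\av{p_i}_{\mathbb{R}^3}$. Granting this, if $\Sigma$ is any compact minimal hypersurface and $R_{\max}$ denotes the maximum of $\av{\pi(\cdot)}_{\mathbb{R}^3}$ on $\Sigma$, then $\Sigma \subset \pi^{-1}(\{\av{x}_{\mathbb{R}^3} \leq R_{\max}\})$ is tangent to the boundary $\pi^{-1}(S_{R_{\max}})$, so the barrier principle forces $R_{\max}$ into the interval $[\min\{\av{p_i}_{\mathbb{R}^3} : \av{p_i}_{\mathbb{R}^3} > 0\}, \tfrac{4}{3}\max_i\av{p_i}_{\mathbb{R}^3}]$, yielding both statements simultaneously.

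To establish the claim I compute the inward mean curvature of $\pi^{-1}(S_R)$ directly. For the circle-invariant function $r := \av{\cdot}_{\mathbb{R}^3}$, the block structure of the Gibbons--Hawking metric gives $\Delta_X(r\circ\pi) = \phi^{-1}\Delta_{\mathbb{R}^3}r = 2/(\phi r)$ and $\av{\nabla_X(r\circ\pi)}^2 = \phi^{-1}$. Feeding these into the standard level-set formula produces
\begin{equation*}
H_{\mathrm{in}} = \phi^{-1/2}\left(\frac{2}{R} + \frac{\partial_r\phi}{2\phi}\right),
\end{equation*}
where $\partial_r\phi$ is the radial derivative in $\mathbb{R}^3$. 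The $3$-convexity condition $H_{\mathrm{in}} > 0$ rearranges to $4\phi + R\,\partial_r\phi > 0$. Substituting $\phi = \alpha + \sum_i (2\av{x - p_i}_{\mathbb{R}^3})^{-1}$ and using $\av{x - p_i}_{\mathbb{R}^3}^2 = R^2 - 2R(\hat x \cdot p_i) + \av{p_i}_{\mathbb{R}^3}^2$ on $S_R$, this becomes
\begin{equation*}
4\alpha + \frac{1}{2}\sum_i \frac{3R^2 - 7R(\hat x \cdot p_i) + 4\av{p_i}_{\mathbb{R}^3}^2}{\av{x - p_i}_{\mathbb{R}^3}^3} > 0.
\end{equation*}

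Each numerator is linear and decreasing in $\hat x \cdot p_i \in [-\av{p_i}_{\mathbb{R}^3}, \av{p_i}_{\mathbb{R}^3}]$, so its worst value is attained at $\hat x \cdot p_i = \av{p_i}_{\mathbb{R}^3}$ and equals $(3R - 4\av{p_i}_{\mathbb{R}^3})(R - \av{p_i}_{\mathbb{R}^3})$. This product is strictly positive precisely when $R < \av{p_i}_{\mathbb{R}^3}$ or $R > \tfrac{4}{3}\av{p_i}_{\mathbb{R}^3}$. Hence in the two stated ranges every summand is strictly positive (terms with $\av{p_i}_{\mathbb{R}^3} = 0$ contribute $3R^2 > 0$ directly), so the inequality holds. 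The genuine obstacle is algebraic rather than analytic: spotting the factorization $3R^2 - 7Rs + 4s^2 = (3R - 4s)(R - s)$, which produces the sharp constant $\tfrac{4}{3}$ in the theorem. The remainder is routine unwinding of the Gibbons--Hawking ansatz, and the multi-Taub--NUT correction $\alpha \geq 0$ only makes the inequality easier.
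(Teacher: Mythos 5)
Your proposal is correct and takes essentially the same approach as the paper: circle-invariant spheres as barriers combined with the Harvey--Lawson principle, with strict $3$-convexity reduced to the scalar inequality $4\phi + R\,\partial_r\phi > 0$ and resolved by the same factorization $3R^2 - 7Rs + 4s^2 = (3R-4s)(R-s)$ that yields the constant $4/3$. The only cosmetic difference is that you derive the inward mean curvature $\phi^{-1/2}\left(2/R + \partial_r\phi/(2\phi)\right)$ from the level-set formula for $r\circ\pi$, whereas the paper reads off the same expression from Lemma \ref{mean curvature S^1 invariant}; the resulting inequality, worst-case estimate at $\hat{x}\cdot p_i = \av{p_i}_{\mathbb{R}^3}$ (the paper's Cauchy--Schwarz step), treatment of $\av{p_i}_{\mathbb{R}^3}=0$ terms, and tangency argument at $R_{\max}$ all coincide with the paper's proof of Lemma \ref{SphereBarrierHypLemma} and Theorem \ref{SphereBarrierHyp}.
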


\begin{thm} \label{IntroHyperusrfaceCylinders}
Let $(X,g)$ be a multi-Eguchi--Hanson or a multi-Taub--NUT space. Compact minimal hypersurfaces need to be contained in $\pi^{-1} (\{x\in U: \sqrt{x_1^2+x_2^2}\leq 2 \max_i r_i\})$, where $r^2_i=(p_i)_1^2+(p_i)_2^2$. Moreover, there are no compact minimal hypersurfaces contained in $\pi^{-1} (\{x\in U: \sqrt{x_1^2+x_2^2}< \min \{r_i: r_i>0\}\})$.
\end{thm}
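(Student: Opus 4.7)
The plan is to follow the template of Theorem~\ref{IntroHypersurfaceSphere}, using the circle-invariant cylinders $C_R := \pi^{-1}(\{x \in U : \sqrt{x_1^2+x_2^2} = R\})$ as barriers. Because for a hypersurface in a $4$-manifold the sum of the three smallest eigenvalues of the second fundamental form is exactly the mean curvature, $3$-convexity of $C_R$ is just mean convexity. The task thus reduces to computing the mean curvature of $C_R$ in the Gibbons--Hawking metric $g=\phi(dr^2+r^2\,d\theta^2+dx_3^2)+\phi^{-1}\eta^2$ (in cylindrical coordinates $(r,\theta,x_3)$ on $U$) and locating its sign. With volume element $\phi r$ and outward unit normal $N=\phi^{-1/2}\partial_r$, the divergence formula yields
\[
H_{\mathrm{out}}(C_R)=\frac{1}{r\phi^{1/2}}+\frac{\partial_r\phi}{2\phi^{3/2}},
\]
so mean convexity with respect to the \emph{inward} normal is equivalent to the pointwise inequality $2\phi+r\,\partial_r\phi>0$ along $C_R$.

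Writing $\phi=c+\sum_i(2|x-p_i|)^{-1}$ with $c\in\{0,1\}$, and using $|x-p_i|^2=r^2-2rs_i+r_i^2+(x_3-(p_i)_3)^2$ with $s_i:=(p_i)_1\cos\theta+(p_i)_2\sin\theta$ satisfying $|s_i|\leq r_i$, the contribution of the $i$-th pole becomes
\[
\frac{r^2-3rs_i+2r_i^2+2(x_3-(p_i)_3)^2}{2|x-p_i|^3}\ \geq\ \frac{(r-r_i)(r-2r_i)}{2|x-p_i|^3}.
\]
Summing over $i$ (the constant $c\geq 0$ only helps), this exhibits two regimes in which $C_R$ is strictly mean convex inward: $R>2\max_i r_i$, where both factors are positive; and $0<R<\min\{r_i:r_i>0\}$, where both factors are negative (and axis poles with $r_i=0$ contribute the manifestly positive $r^2+2(x_3-(p_i)_3)^2$). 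The threshold $r=2r_i$ is exactly what produces the constant $2$ in the outer bound.

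With $3$-convexity established, the barrier principle of Harvey--Lawson \cite[Theorem 5.7]{HaLa12} then closes both claims by the tangent-point argument already used in Theorem~\ref{IntroHypersurfaceSphere}. For a compact minimal hypersurface $\Sigma$, set $R^*:=\max_\Sigma\sqrt{x_1^2+x_2^2}$. If $R^*>2\max_i r_i$, then $\Sigma$ is tangent from the interior to $C_{R^*}$, contradicting the existence of a strictly $3$-convex function diverging along $C_{R^*}$. If instead $\Sigma\subset\pi^{-1}(\{\sqrt{x_1^2+x_2^2}<\min\{r_i:r_i>0\}\})$, then $R^*>0$ (the preimage of the axis is at most $2$-dimensional and so cannot contain a compact hypersurface), and the same tangency at $C_{R^*}$ furnishes the contradiction. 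The only step requiring care is the estimate on $r^2-3rs_i+2r_i^2$ and its factorisation $(r-r_i)(r-2r_i)$; this is also what transparently explains why $1$-convexity must fail (the shape-operator eigenvalue along $\partial_{x_3}$ is too small), hence the restriction from arbitrary minimal submanifolds to hypersurfaces.
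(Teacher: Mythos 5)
Your proposal is correct and follows essentially the same route as the paper: Lemma \ref{CylinderBarrierHypLemma} likewise reduces strict $3$-convexity of the circle-invariant cylinders to the inequality $\la \nabla\phi,\nu\ra+2\phi/r>0$ (your $2\phi+r\,\partial_r\phi>0$), proves it by the identical estimate culminating in the factorisation $(r-r_i)(r-2r_i)$ with the axis poles $r_i=0$ handled in the same way, and Theorem \ref{CylinderBarrierHyp} then closes with the same maximal-radius tangency argument. Two minor remarks: the unit normal is the horizontal lift of $\phi^{-1/2}\partial_r$ rather than the coordinate field itself (your divergence computation is unaffected, by $S^1$-invariance), and the paper routes the final step through Corollary \ref{BarrierGMT} so as to cover non-orientable hypersurfaces and varifolds, whereas the smooth Harvey--Lawson argument you invoke requires orientability as in Corollary \ref{BarrierSmooth}.
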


\begin{corollary} \label{IntroCorollaryCollinearCase}
Let $(X,g)$ be a multi-Eguchi--Hanson or a multi-Taub--NUT space with the $\{p_i\}_{i=1}^k$ lying on a line. Then, there are no compact minimal hypersurfaces in $X$.
\end{corollary}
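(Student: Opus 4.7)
The plan is to deduce the corollary as an immediate consequence of Theorem \ref{IntroHyperusrfaceCylinders} by aligning coordinates with the common line of the $\{p_i\}$. First, I would observe that the Gibbons--Hawking Riemannian metric $V^{-1}\theta^2+V\,g_{\mathbb{R}^3}$ depends on the configuration only through the Euclidean distances to the points: applying an orthogonal rotation $R\in O(3)$ to the $\{p_i\}$ yields an isometric Riemannian manifold (the hyperk\"{a}hler triple is rotated, but that is irrelevant for the minimal submanifold question). Hence Theorem \ref{IntroHyperusrfaceCylinders} can be applied with cylinders around any axis $\ell$ in $\mathbb{R}^3$, where the constants $r_i$ are replaced by the distances from $p_i$ to $\ell$.

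Next, I would exploit the collinearity hypothesis by choosing $\ell$ to be the line through the $\{p_i\}$, so that the corresponding $r_i$ all vanish. The first part of (the rotated) Theorem \ref{IntroHyperusrfaceCylinders} then forces any compact minimal hypersurface $\Sigma^3\subset X$ to be contained in $\pi^{-1}(\ell\cap U)$, which has real dimension $2$ in the $4$-manifold $X$ because the $S^1$-fibers of $\pi$ are one-dimensional. A $3$-dimensional hypersurface cannot sit inside a $2$-dimensional set, giving the desired contradiction. Equivalently, the second part of Theorem \ref{IntroHyperusrfaceCylinders}, read with the convention $\min\emptyset=+\infty$, rules out compact minimal hypersurfaces in $\pi^{-1}(\{\sqrt{x_1^2+x_2^2}<+\infty\})=X$ directly.

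The only point requiring attention is the rotation step, namely that we may freely align $\ell$ with the $x_3$-axis appearing in Theorem \ref{IntroHyperusrfaceCylinders}. I would justify this either by noting that the isometry class of $(X,g)$ is preserved under rotating the configuration (with the $S^1$-action and complex structures transported accordingly), or simply by remarking that the proof of Theorem \ref{IntroHyperusrfaceCylinders} is axis-agnostic: the $3$-convexity and $1$-convexity computations for the cylinders $\sqrt{x_1^2+x_2^2}=c$ carry over verbatim to cylinders around any line in $\mathbb{R}^3$, with $r_i$ interpreted as the distance of $p_i$ from that line. Given this, the deduction is routine and the corollary follows without further calculation.
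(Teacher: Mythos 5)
Your proposal is correct and is essentially the paper's own proof: the paper invokes the same remark that rotations and translations of $\mathbb{R}^3$ induce isometries of $(X,g)$ to place the $\{p_i\}$ on the $x_3$-axis, so that $r_i=0$ for all $i$, and then concludes directly from the cylinder barrier theorem (Theorem \ref{IntroHyperusrfaceCylinders}, in its GMT form Theorem \ref{CylinderBarrierHyp}). Your explicit dimension count, showing a $3$-dimensional hypersurface cannot lie in the $2$-dimensional set $\pi^{-1}(\ell\cap U)$, merely spells out what the paper leaves implicit.
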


\begin{thm}  \label{IntroHigherCodimensionSpheres}
Let $(X,g)$ be a multi-Eguchi--Hanson or a multi-Taub--NUT space. Compact minimal submanifolds need to be contained $\pi^{-1} (\{x\in U: \av{x}\leq C \max_i\av{p_i}_{\mathbb{R}^3}\})$, where $C\approx 5.07$ is the only real root of the polynomial: $-x^3+4x^2+5x+2$. Moreover, if $p_i=0$ for some $i$, then, there are no compact minimal submanifolds contained in $\pi^{-1} \left(\left\{x\in U: \av{x}< r_0\right\}\right)$, for some $r_0$ small enough. 
\end{thm}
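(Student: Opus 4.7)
The plan is to construct a circle-invariant hypersurface barrier that is strictly $1$-convex (and therefore $k$-convex for every $k$), apply the Harvey--Lawson barrier principle to rule out compact minimal submanifolds crossing it, and then add a separate local argument near any vanishing singular point.

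\medskip

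\textbf{Step 1: Second fundamental form of invariant spheres.} Let $P := \max_i |p_i|_{\mathbb{R}^3}$, write $\Sigma_R := \pi^{-1}(\{x\in U : |x|_{\mathbb{R}^3}=R\})$, and use the Gibbons--Hawking metric
\[
g = \phi \, dx\cdot dx + \phi^{-1}\eta\otimes\eta.
\]
By $S^1$-invariance the principal directions of $\Sigma_R$ with respect to the inward normal split into the fibre direction, the base-radial direction, and the two base-tangential directions, and each principal curvature is given by an explicit expression in $R$, $\phi|_{\{|x|=R\}}$ and $\partial_r\phi|_{\{|x|=R\}}$. This is essentially the computation already carried out in the paper to prove $3$-convexity; here I need to keep track of every eigenvalue individually, not just the sum of the smallest three.

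\medskip

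\textbf{Step 2: The cubic threshold.} Writing $\phi = \varepsilon + \sum_i \frac{1}{2|x-p_i|}$ with $\varepsilon\in\{0,1\}$, the triangle inequality gives sharp one-sided bounds on $\phi$, $|\nabla\phi|$ and $\partial_r\phi$ on $\{|x|=R\}$ in terms of $R\pm P$. Substituting these extremal bounds into the principal-curvature formulas of Step 1 and asking that the \emph{smallest} eigenvalue be strictly positive for every admissible configuration of $\{p_i\}$ reduces, after clearing denominators, to a polynomial inequality in $t := R/P$. The critical threshold is the unique positive real root $C \approx 5.07$ of $-t^3+4t^2+5t+2=0$, and for every $R > CP$ the hypersurface $\Sigma_R$ is strictly $1$-convex with respect to the domain $\Omega_R := \pi^{-1}(\{|x|<R\})$.

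\medskip

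\textbf{Step 3: Barrier argument.} For any such $R$, \cite[Theorem 5.7]{HaLa12} produces a smooth function on $\Omega_R$ that is strictly $1$-subharmonic and strict near $\partial\Omega_R$. Restricted to any compact minimal submanifold $\Sigma\subset\Omega_R$ it is subharmonic, so the maximum principle forbids $\Sigma$ from touching $\partial\Omega_R$. Letting $R\downarrow CP$ gives the stated containment, and the same reasoning applies to stationary integral varifolds.

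\medskip

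\textbf{Step 4: Exclusion near a zero singular point.} If $p_i=0$, then near $p_i$ the Gibbons--Hawking geometry looks like a small punctured neighbourhood of the origin in $\mathbb{R}^4$ (the $i$th summand of $\phi$ dominates), and $S^1$-invariant spheres around $p_i$ of small radius are strictly convex with respect to their exterior in $X$, as recalled after \cite[Appendix B]{LO20}. Applying the same Harvey--Lawson barrier argument to the exterior of such a small sphere (inside a fixed larger ball avoiding the other $p_j$) shows that no compact minimal submanifold can be contained in $\pi^{-1}(\{|x|<r_0\})$ for $r_0$ small enough.

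\medskip

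The main obstacle is Step 2: unlike for $3$-convexity, where one only needs the sum of three eigenvalues, here the worst-case eigenvalue must be controlled individually, and the worst admissible configuration of the $p_i$'s has to be identified carefully. Tracking the resulting loss through the principal-curvature formulas is precisely what produces the cubic $-x^3+4x^2+5x+2$ and the constant $C\approx 5.07$.
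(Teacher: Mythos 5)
Your global strategy coincides with the paper's: invariant spheres $N_r=\pi^{-1}(S_r(0))$ as hypersurface barriers, strict convexity for $r>C\max_i\av{p_i}_{\mathbb{R}^3}$, the Harvey--Lawson construction (Theorem \ref{boundarytointerior}) plus the maximum principle, and small invariant spheres near a point $p_i=0$. The genuine gap is in Steps 1--2, which carry the entire content of the theorem and which, as formulated, would fail. The second fundamental form of $N_r$ does \emph{not} diagonalize in the invariant frame $(\phi^{1/2}\xi,\phi^{-1/2}u,\phi^{-1/2}v)$: by Lemma \ref{SFF S^1 invariant} there are off-diagonal fibre--tangential entries $-(2\phi)^{-1}\la u\times\nabla_{\mathbb{R}^3}\phi,\tilde\nu\ra$, and since $\la u\times\nabla\phi,\nu\ra=\mp\la\nabla\phi,v\ra$ these are exactly the \emph{tangential} components of $\nabla\phi$, generically nonzero for a multi-centre $\phi$. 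Hence the principal curvatures are not ``explicit expressions in $R$, $\phi|_{\{\av{x}=R\}}$ and $\partial_r\phi$,'' and one-sided radial bounds on $\phi$ and $\partial_r\phi$ give no control of the smallest eigenvalue. The paper avoids eigenvalues altogether via Sylvester's criterion: the first two leading minors reduce to $\la\nabla\phi,x\ra+2\phi>0$, which factors as $\sum_i(\av{x}-2\av{p_i})(\av{x}-\av{p_i})/(2\av{x-p_i}^3)$ and holds for $r>2\max_i\av{p_i}$; the determinant factors as the first minor times a term whose positivity is equivalent to $\av{\nabla\phi}^2+2\phi\la\nabla\phi,x\ra/\av{x}^2<0$, where $\av{\nabla\phi}^2$ appears precisely because the two off-diagonal tangential-gradient entries recombine with the normal component. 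Estimating this quantity --- a double sum over pairs $(p_i,p_j)$ --- term by term with Cauchy--Schwarz is what produces the numerator $-\av{x}^3+4A\av{x}^2+5A^2\av{x}+2A^3$ with $A=\max_i\av{p_i}$, i.e.\ the cubic and $C\approx 5.07$. Your Step 2 names this cubic as output but supplies no mechanism that could generate it: the proposed reduction (worst-case eigenvalue from extremal bounds on $\phi$, $\partial_r\phi$) has no access to the tangential-gradient terms that the determinant condition must dominate, so the ``polynomial inequality in $t=R/P$'' is an assertion rather than an argument.

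Step 4 has a smaller gap of the same nature: ``the geometry looks like flat $\mathbb{R}^4$ near $p_i=0$'' does not by itself yield strict convexity of small invariant spheres, since strict convexity of a hypersurface is not stable under merely qualitative perturbation of the metric; one must show the flat model's strictness beats the error terms coming from the other centres and from $m$. The paper does this quantitatively in the second half of Lemma \ref{SphereBarrierLemma}: for small $\av{x}$ one has $\av{\nabla\phi}^2\leq \frac{1}{4\av{x}^4}+B_1\av{x}^{-2}+B_2$ while $2\phi\la\nabla\phi,x\ra/\av{x}^2\leq -\frac{1}{2\av{x}^4}+C_1\av{x}^{-3}+C_2\av{x}^{-2}$, so the sum is negative and Sylvester's criterion again applies. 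Your Step 3 and the extension to stationary integral varifolds are fine and match the paper (Corollary \ref{BarrierGMT}).
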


The results discussed so far can be extended to multi-centred Gibbons--Hawking spaces, which are incomplete generalizations of the multi-Eguchi--Hanson and of the multi-Taub--NUT spaces. 

For a generic multi-Eguchi--Hanson or multi-Taub--NUT space we have considered several natural barriers for compact minimal submanifolds. However, these are not enough to prove a result as strong as in the one or two points case. Hence, one would like to find, at least, local barriers around the circle-invariant ones. To this scope, we recall that, in a general Riemannian manifold, the square of the distance function from any strongly stable orientable compact minimal submanifold of dimension $k$ is locally a $k$-convex function \cite[Proposition 4.1]{TW}. Here, a minimal submanifold is said to be strongly stable if the part not involving the Laplacian of the Jacobi operator, $-\mathcal{R}-\mathcal{A}$, is pointwise positive. Strong stability actually characterize the convexity of the square of the distance function. Indeed, we prove the following converse.

\begin{proposition} \label{Final converse Strong Stability}
Let $(X,g)$ be a Riemannian manifold, let $\Sigma\subset M$ be an orientable compact minimal submanifold of dimension $k$ such that $-\mR-\mA$ is a negative operator at a point $p\in \Sigma$, and let $f\in C^{\infty}(\mathbb{R};\mathbb{R})$ increasing. Denoting by $\psi$ the square of the distance function from $\Sigma$, then, for every neighbourhood of $\Sigma$ there exists a point in it where $f\circ\psi$ is not $k$-convex. Moreover, the same holds for every suitable $C^2$-small perturbation of $f\circ \psi$.
\end{proposition}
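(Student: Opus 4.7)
The plan is to mirror Tsai and Wang's forward argument (Proposition 4.1 of \cite{TW}) by tracking the trace of $\mathrm{Hess}(\psi)$ on a distinguished tangential $k$-plane at points $q_t:=\exp_p(tn)$ approaching $\Sigma$, and to show that once $-\mR-\mA$ fails positivity at $p$ this trace becomes strictly negative for small $t>0$. Pick a unit $n\in N_p\Sigma$ with $\lambda:=\langle(\mR+\mA)(n),n\rangle>0$; then $q_t$ lies in the tubular neighbourhood of $\Sigma$, $\psi(q_t)=t^2$, and $q_t\to p$ as $t\to 0$, so it suffices to exhibit failure of $k$-convexity of $f\circ\psi$ at $q_t$ for arbitrarily small $t$.

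\emph{Core computation.} Let $\{e_1,\dots,e_k\}$ be an orthonormal frame of $T_p\Sigma$ and let $J_i$ be the Jacobi field along $\gamma(s)=\exp_p(sn)$ with $J_i(0)=e_i$ and $J_i'(0)=-A_n(e_i)$, obtained as the variational field of $(s,u)\mapsto\exp_{\sigma_i(s)}(u\widetilde n(\sigma_i(s)))$ for $\sigma_i'(0)=e_i$ and $\widetilde n$ a $\nabla^\perp$-parallel extension of $n$. Both $\langle J_i,\gamma'\rangle$ and $\langle J_i',\gamma'\rangle$ vanish identically, so $V(t):=\mathrm{span}\{J_i(t)\}$ is a $k$-plane in $\partial_r^\perp\subset T_{q_t}X$; torsion-freeness of $\nabla$ yields $S(J_i(t))=J_i'(t)$ for the shape operator $S:=\nabla\partial_r$ of $\{r=t\}$. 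Taylor-expanding $J_i$ (via the Jacobi equation $J_i''=-R(J_i,\dot\gamma)\dot\gamma$) together with $g_{ij}(t):=\langle J_i(t),J_j(t)\rangle=\delta_{ij}-2th^n_{ij}+O(t^2)$ gives
\[
\mathrm{tr}\bigl(\mathrm{Hess}(r)|_{V(t)}\bigr)=\sum_{i,j}g^{ij}(t)\langle S(J_i(t)),J_j(t)\rangle=-\mathrm{tr}(A_n)-t\bigl(|A_n|^2+\Ric^T(n,n)\bigr)+O(t^2).
\]
Minimality kills $\mathrm{tr}(A_n)=\langle H,n\rangle=0$, and the standard identifications $\langle\mR(n),n\rangle=\Ric^T(n,n)$, $\langle\mA(n),n\rangle=|A_n|^2$ turn the bracketed quantity into $\lambda$. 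Combining with $\mathrm{Hess}(\psi)|_{V(t)}=2t\,\mathrm{Hess}(r)|_{V(t)}$---valid because $d\psi=2r\,dr$ vanishes on $\partial_r^\perp$---produces the key identity
\[
\mathrm{tr}\bigl(\mathrm{Hess}(\psi)|_{V(t)}\bigr)=-2\lambda t^2+O(t^3),
\]
which is strictly negative for all sufficiently small $t>0$.

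\emph{From $\psi$ to $f\circ\psi$, perturbations, and obstacle.} The chain rule $\mathrm{Hess}(f\circ\psi)=f'(\psi)\mathrm{Hess}(\psi)+f''(\psi)\,d\psi\otimes d\psi$ together with the vanishing of $d\psi$ on $V(t)$ gives $\mathrm{tr}(\mathrm{Hess}(f\circ\psi)|_{V(t)})=-2\lambda f'(t^2)t^2+O(t^3)<0$ for small $t$ at which $f'(t^2)>0$, and such $t$ exist in every right neighbourhood of $0$ since $f$ is increasing. As the sum of the $k$ smallest eigenvalues of a symmetric bilinear form is bounded above by its trace on any $k$-plane, $f\circ\psi$ fails $k$-convexity at $q_t$; varying $t$ produces such points in every neighbourhood of $\Sigma$. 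For a perturbation $h=f\circ\psi+\eta$, the trace of $\mathrm{Hess}(\eta)$ on $V(t)$ is bounded by $k\|\eta\|_{C^2}$, so once the neighbourhood and $t$ are fixed, any $\eta$ with $\|\eta\|_{C^2}<\lambda f'(t^2)t^2/k$ preserves strict negativity---this is the sense of ``suitable''. The main obstacle is the sign-careful Taylor expansion behind the coefficient $-2\lambda$: the Jacobi equation, the choice $S=\nabla\partial_r$, the inversion of $e_i\mapsto J_i(t)$, and the conventions for $\mR,\mA$ must all align, since a single sign error would reproduce the forward direction of Tsai--Wang instead of its converse.
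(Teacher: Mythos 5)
Your proof is correct, and while it shares the paper's overall strategy---find a $k$-plane orthogonal to the radial direction at points tending to $p$ on which $\Tr\,\mathrm{Hess}\,\psi=2\la(-\mR-\mA)_p n,n\ra t^2+\mO(t^3)<0$, then use the chain rule with $d\psi$ vanishing on that plane, then absorb $C^2$-small perturbations---the derivation of the key expansion is genuinely different. The paper works in Tsai--Wang's ``partial'' geodesic frame at arbitrary nearby Fermi points $(0,y)$, takes $L$ spanned by the parallel-transported tangent frame, and simply quotes the expansion of \cite[Proposition 2.6]{TW} as in \cite[Proposition 4.1]{TW}; you restrict to the single normal geodesic $q_t=\exp_p(tn)$ and re-derive the expansion self-containedly via Jacobi fields $J_i$ with $J_i(0)=e_i$, $J_i'(0)=-A_n e_i$, the tube identity $\mathrm{Hess}\,r(J_i,J_j)=\la J_i',J_j\ra$, and the Gram-matrix expansion $g_{ij}=\delta_{ij}-2th^n_{ij}+\mO(t^2)$. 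Your observation that $\la J_i,\gamma'\ra\equiv 0$ \emph{exactly} (not just to leading order) is the step that makes this work: it guarantees $d\psi$ vanishes on $V(t)$ on the nose, so the $\mO(1)$ normal part of $\mathrm{Hess}\,\psi$ cannot contaminate the $\mO(t^2)$ main term, and your trace computation checks out ($-\Tr A_n-t(\av{A_n}^2+\Ric^T(n,n))+\mO(t^2)$, with minimality killing the constant term). What your route buys is independence from the machinery of \cite{TW}; what the paper's buys is brevity and negativity at all points $(0,y)$ with $\av{y}$ small, not just along one geodesic---though for the statement one bad point per neighbourhood suffices, so this costs nothing. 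Two minor points: your displayed inequality $-2\lambda f'(t^2)t^2+\mO(t^3)<0$ should be written in product form $f'(t^2)\left(-2\lambda t^2+\mO(t^3)\right)$, which is what the chain rule with $d\psi|_{V(t)}=0$ actually gives (otherwise a tiny $f'(t^2)$ could be swamped by an error term not proportional to it); with that fix, your choice of $t$ with $f'(t^2)>0$ is in fact \emph{more} careful than the paper's proof, which tacitly assumes $f'(\psi)>0$, and your quantification of ``suitable'' perturbations via $\|\eta\|_{C^2}<\lambda f'(t^2)t^2/k$ matches the paper's condition $\sum_{l=0}^2\av{\nabla^l h}\leq\epsilon\av{y}^2$ in spirit.
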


As in all examples where this method is used \cite{TW, TW1, TW18} the barriers are solely depending on the distance function, we showed that the strong stability condition is equivalent to the existence of natural local barriers.

Going back to the multi-Eguchi--Hanson and multi-Taub--NUT spaces, we observe that strongly stable compact minimal submanifolds need to be $2$-dimensional and also circle-invariant under suitable topological conditions. In particular, we can only consider the circle-invariant surfaces connecting two singular points of $\phi$. If these singular points of $\phi$ are sufficiently separated from the others, then, we prove that the related surface is strongly stable. This is a slight generalization of \cite[Proposition A.1]{LO20}, where we do not assume collinearity.

\begin{proposition} \label{strong stability GH}
Let $(X,g)$ be a multi-Eguchi--Hanson or a multi-Taub--NUT space with $k\geq2$ singular points of $\phi$ $\{p_i\}_{i=1}^k$, let $N$ be a compact $S^1$-invariant minimal surface in $(X,g)$, let $\gamma:=\pi(N)$ be the associated straight line in $U$ connecting $p_1$ and $p_2$, let $q$ be the midpoint of $\gamma$ and let $2a:=\Length_{\mathbb{R}^3}(\gamma)$. Suppose that, for all $i>2$, the Euclidean distance from $q$ to $p_i$ is strictly greater than $(s+1)a$ for $s\geq\max\{\sqrt{{(k-2)}/{2}},R_k\}$, where $R_k$ is the only real root of $-4x^3+16x^2+2x+(k-2)$. Then, $N$ is strongly stable.
\end{proposition}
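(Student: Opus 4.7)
The plan is to extend the collinear computation of Lotay--Oliveira \cite[Proposition A.1]{LO20}, isolating the contribution of $p_1, p_2$, which already drives stability, and treating the remaining $p_i$ as a perturbation controlled by the distance hypothesis. Recall that $g = \phi\, g_{\mathbb{R}^3} + \phi^{-1}(d\theta + \alpha)^2$ and that $N = \pi^{-1}(\gamma)$ is an $S^1$-bundle over $\gamma$. Picking an adapted orthonormal frame $\{e_1, e_2\}$ of $TN$ (one along $\gamma$, one along the $S^1$-fibre) and $\{e_3, e_4\}$ of the normal bundle (the two $\mathbb{R}^3$-directions perpendicular to $\gamma$, paired by the complex structure that makes $N$ complex), one can express both the second fundamental form of $N$ and the ambient curvature endomorphism in closed form in terms of $\phi$ and its first and second partial derivatives on $\gamma$. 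The outcome is an explicit $2\times 2$ symmetric endomorphism $-\mathcal{R} - \mathcal{A}$ of the normal bundle at every point of $N$.

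Next, split $\phi = \phi_{12} + \psi$ with $\phi_{12} = \epsilon + \frac{1}{2|x-p_1|} + \frac{1}{2|x-p_2|}$ and $\psi = \sum_{i\geq 3}\frac{1}{2|x-p_i|}$. The $\phi_{12}$-piece is axially symmetric about $\gamma$ and, by the argument in the cited appendix, produces a positive operator whose smallest eigenvalue is minimised at the midpoint $q$. For the perturbation $\psi$, the hypothesis gives $|x - p_i| \geq sa$ for every $x \in \gamma$ and every $i \geq 3$, so on $\gamma$
\[
\psi \leq \frac{k-2}{2sa}, \qquad |\nabla \psi| \leq \frac{k-2}{2s^2 a^2}, \qquad |\mathrm{Hess}\,\psi| \leq \frac{k-2}{s^3 a^3}.
\]
Substituting these bounds into the expression for $-\mathcal{R} - \mathcal{A}$ reduces strong stability to two scalar positivity conditions on the eigenvalues of the normal-bundle endomorphism. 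One is quadratic, equivalent to $s^2 \geq (k-2)/2$, and the other is cubic, equivalent to $-4s^3 + 16s^2 + 2s + (k-2) \leq 0$. The stated hypothesis $s \geq \max\{\sqrt{(k-2)/2}, R_k\}$ is precisely what is needed to satisfy both simultaneously.

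The main obstacle is the bookkeeping in the explicit computation of $-\mathcal{R} - \mathcal{A}$ and, in particular, the diagonalisation of the $2\times 2$ normal-bundle endomorphism: one must verify that after inserting the worst-case bounds for $\psi$ along $\gamma$ the two eigenvalue conditions really collapse to the clean polynomials stated, rather than to more complicated expressions in $s$, $a$, and $k$. A secondary technical point is ensuring the estimate is uniform in both the ALE ($\epsilon = 0$) and ALF ($\epsilon = 1$) cases; since $\epsilon$ contributes non-negatively to $\phi$ without affecting its derivatives, it only aids positivity and poses no additional difficulty.
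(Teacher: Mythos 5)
Your high-level architecture is the same as the paper's --- treat the points $p_i$, $i>2$, as a far-away perturbation, use the hypothesis plus the triangle inequality to get $\av{x-p_i}>sa$ along $\gamma$, and recover the quadratic and the cubic as the two halves of the $\max$ in the hypothesis --- but as written the proposal has a genuine gap: the central computation is asserted rather than performed, and the step you yourself flag as ``the main obstacle'' is precisely what the proof consists of. Moreover, the paper never diagonalises a $2\times 2$ normal-bundle endomorphism. It invokes Proposition \ref{Strong Stability in HK} (Tsai--Wang), by which strong stability of a minimal surface in a hyperk\"ahler $4$-manifold is equivalent to the single scalar condition $K>0$, and then uses the closed formula of Lemma \ref{GaussianCurvature}, in which $K$ has sign opposite to $M+N$ with $N$ the manifestly negative term. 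The two polynomials do not arise as two eigenvalue conditions: they come from grouping terms in the single inequality $M+N<0$, namely $(I)=2(\partial_{x_3}\tilde{\phi})^2(a^2-x_3^2)^3$ is absorbed by the $-2a^2$ part of $N$ (giving $s\geq\sqrt{(k-2)/2}$), while $(II)+(III)+(IV)$ is absorbed by the $-2a\tilde{\phi}(a^2-x_3^2)$ part (giving $-4s^3+16s^2+2s+(k-2)\leq 0$). Nothing in your proposal derives this collapse, and your planned route makes it harder than it is, since Proposition \ref{Strong Stability in HK} removes the diagonalisation problem entirely.

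Two quantitative points would also derail the plan as stated. First, the additive splitting ``$\phi_{12}$ gives a positive operator, $\psi$ is a small perturbation'' cannot be run as such: $K$ is a nonlinear function of $\phi$ (it contains $(\partial_{x_3}\tilde{\phi})^2$, $\tilde{\phi}\,\partial^2_{x_3}\tilde{\phi}$, and the denominator $(a+\tilde{\phi}(a^2-x_3^2))^3$), and the favourable comparison in the paper is against the $\tilde{\phi}$-dependent part of the negative term --- the perturbation terms $(II),(III),(IV)$ are bounded by multiples of $a\tilde{\phi}(a^2-x_3^2)/s$, not by a fixed gap supplied by the two-point operator, so positivity of the unperturbed piece plus smallness of $\psi$ in your three norms is not the mechanism. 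Second, your Hessian bound $\av{\mathrm{Hess}\,\psi}\leq (k-2)/(s^3a^3)$ is the operator-norm bound and is twice too weak: what the argument needs is the one-sided estimate $\partial^2_{x_3}\tilde{\phi}\geq -\sum_{l\geq 3} 1/(2r_l^3)$, which follows from the eigenvalue structure of $\mathrm{Hess}(1/r)$ (most negative eigenvalue $-1/(2r^3)$, while the norm is $1/r^3$). With your constant, the bound on $(IV)$ doubles and the resulting cubic is $-4s^3+16s^2+2s+2(k-2)\leq 0$, whose real root strictly exceeds $R_k$ for $k>2$, so the stated hypothesis $s\geq\max\{\sqrt{(k-2)/2},R_k\}$ would no longer suffice. (A minor slip: in the multi-Taub--NUT case the mass is an arbitrary $m>0$, not $\epsilon\in\{0,1\}$; it indeed only helps, but through the term $N$, since $\tilde{\phi}\geq m$ enlarges the negative contribution.)
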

 
It is easy to see that Proposition \ref{strong stability GH} cannot provide strong stability for all circle-invariant compact minimal surfaces when we have at least 3 singular points of $\phi$.

Finally, we provide a family of multi-Eguchi--Hanson and multi-Taub--NUT spaces with a circle-invariant minimal surface admitting a point where $-\mR-\mA$ is a negative operator. 

\begin{proposition} \label{CounterexampleSS}
Let $(X,g)$ be the multi-Eguchi--Hanson or a multi-Taub--NUT space with singular points of $\phi$ $p_1=(0,0,a)$, $p_2=(0,0,-a)$ and $p_3=(0,\epsilon,0)$, for some $a,\epsilon>0$. Then, fixed $a$ ($\epsilon$) there exists an $\epsilon$ small enough (an $a$ big enough) such that $-\mR-\mA$ is a negative operator at $\pi^{-1}(0)$.
\end{proposition}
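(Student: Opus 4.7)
The plan is to exploit the discrete symmetries of the configuration to diagonalise $-\mR-\mA$ at a convenient point of $\pi^{-1}(0)$, and then to perform an asymptotic analysis of its two resulting eigenvalues in each prescribed limit. Since $N$ is $S^1$-invariant and $\mR+\mA$ is $S^1$-equivariant, it suffices to work at a single point $q\in\pi^{-1}(0)$.

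First, the reflection $\sigma_1\colon(x_1,x_2,x_3)\mapsto(-x_1,x_2,x_3)$ preserves the set $\{p_1,p_2,p_3\}$ and hence the Gibbons--Hawking data, so it lifts to an isometry $\tilde\sigma_1$ of $X$. Because $\sigma_1^{*}(*d\phi)=-*d\phi$, this lift acts by inversion on each $S^1$-fibre and has two fixed points on $\pi^{-1}(0)$; pick such a $q$. In the orthonormal frame $e_1=\phi^{-1/2}\partial_{x_1}$, $e_2=\phi^{-1/2}\partial_{x_2}$, $e_3=\phi^{-1/2}\partial_{x_3}$, $e_4=\phi^{1/2}\xi$ at $q$, with $e_1,e_2$ spanning $N^{\perp}|_q$ and $e_3,e_4$ spanning $T_qN$, the differential $d\tilde\sigma_1|_q$ is $\mathrm{diag}(-1,1,1,-1)$. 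Since $\tilde\sigma_1$ preserves $N$ and $\mR+\mA$ commutes with $d\tilde\sigma_1|_{N^{\perp}|_q}$, which has two distinct eigenvalues, $\mR+\mA$ is diagonal in $\{e_1,e_2\}$. The second reflection $\sigma_3\colon x_3\mapsto-x_3$, which swaps $p_1,p_2$ while fixing $p_3$, further forces every odd-order $x_3$-derivative of $\phi$ at $0$ to vanish.

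Plugging the Gibbons--Hawking formulas for the Riemann tensor and for the second fundamental form of an $S^1$-invariant minimal surface fibering over a straight segment into the diagonal entries $\langle(\mR+\mA)e_i,e_i\rangle$ at $q$, the symmetry reduction leaves expressions involving only $\phi(0)$, $\partial_{x_2}\phi(0)$ and the diagonal entries of $\mathrm{Hess}\,\phi(0)$. Using $\phi(x)=c+\tfrac{1}{2}\sum_i|x-p_i|^{-1}$ (with $c=0$ in multi-Eguchi--Hanson and $c>0$ in multi-Taub--NUT), these are explicit algebraic functions of $a,\epsilon,c$, and in particular, as $\epsilon\to 0$ with $a$ fixed,
\[
\phi(0)\sim\frac{1}{2\epsilon},\qquad \partial_{x_2}\phi(0)\sim\frac{1}{2\epsilon^{2}},\qquad \partial^2_{x_j}\phi(0)\sim\frac{\beta_j}{\epsilon^{3}}
\]
for explicit non-zero constants $\beta_j$. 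A scaling check shows that the $\mathrm{Hess}\,\phi$-terms coming from $\langle\mR(e_i),e_i\rangle$ dominate $\langle\mA(e_i),e_i\rangle$ with a sign making both eigenvalues of $\mR+\mA$ strictly positive for $\epsilon$ small, equivalently $-\mR-\mA$ a strictly negative operator at $q$. This proves the ``fixed $a$'' part of the statement.

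For the ``fixed $\epsilon$, $a\to\infty$'' part, the multi-Eguchi--Hanson case follows from the homothety $x\mapsto\lambda x$, $p_i\mapsto\lambda p_i$, which rescales $g$ by $\lambda$ and hence preserves the signature of $-\mR-\mA$: taking $\lambda=1/\epsilon$ identifies this regime with the one already treated. In the multi-Taub--NUT case the constant $c$ breaks the homothety, but as $a\to\infty$ the contributions of $p_1,p_2$ to $\phi$ and all its derivatives at $0$ decay to zero, so the pointwise expressions at $q$ converge to their single-centred Taub--NUT analogues; a similar dominant-$\mathrm{Hess}\,\phi$ analysis in this limiting geometry (valid for $\epsilon$ in the appropriate range) yields the claim by continuity in $a$. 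The main technical obstacle is the explicit computation: the Riemann tensor and second fundamental form together produce several competing terms and one must track them carefully enough to isolate a leading term of definite sign for both eigenvalues simultaneously. The $\sigma_1$-diagonalisation carried out at the outset is what keeps this tractable by eliminating off-diagonal cancellations.
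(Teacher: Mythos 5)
Your symmetry reductions are fine as far as they go, but the proof has a genuine gap at its decisive step: the assertion that ``the $\mathrm{Hess}\,\phi$-terms coming from $\la\mR(e_i),e_i\ra$ dominate $\la\mA(e_i),e_i\ra$''. They do not. With your own asymptotics $\phi(0)\sim(2\epsilon)^{-1}$, $\partial_{x_2}\phi(0)\sim(2\epsilon^{2})^{-1}$, $\mathrm{Hess}\,\phi(0)\sim\epsilon^{-3}$, the curvature terms in the orthonormal frame of Lemma \ref{Structure} scale like $\mathrm{Hess}\,\phi/\phi^{2}\sim\epsilon^{-1}$ and $\av{\nabla\phi}^{2}/\phi^{3}\sim\epsilon^{-1}$, while the second fundamental form of $N$ has entries of size $\partial_{x_2}\phi/\phi^{3/2}\sim\epsilon^{-1/2}$, hence $\la\mA(e_i),e_i\ra\sim\av{A}^{2}\sim\epsilon^{-1}$: exactly the same order as the $\mR$-contributions. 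So no scaling check can settle the sign; the answer depends on a competition of same-order constants, and the ``explicit computation'' you defer at the end is precisely the content of the proposition. The same objection applies to your $a\to\infty$ discussion in the multi-Taub--NUT case, where moreover the caveat ``valid for $\epsilon$ in the appropriate range'' falls short of the statement, which fixes an arbitrary $\epsilon>0$ (and whatever mass parameter $m$ the space carries).

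The paper's proof shows both how to make the computation tractable and why your $\sigma_1$-diagonalisation is unnecessary: circle-invariant minimal surfaces are holomorphic (Proposition \ref{S^1 invariant geodesics are calibrated}), and in a hyperk\"ahler 4-manifold the operator $-\mR-\mA$ on the normal bundle of a minimal surface is pointwise governed by the Gaussian curvature (Proposition \ref{Strong Stability in HK}), so your two eigenvalues coincide automatically and negativity of $-\mR-\mA$ at $\pi^{-1}(0)$ is equivalent to $K<0$ there. Lemma \ref{GaussianCurvature}, applied with $\tilde{\phi}=m+\tfrac{1}{2}\av{x-p_3}^{-1}$, gives the closed formula $K=-(M+N)/\bigl(2(a+\tilde{\phi}(a^{2}-x_3^{2}))^{3}\bigr)$, and evaluating at $x_3=0$ yields
\begin{equation*}
(M+N)(p)=-2a^{2}-2a^{3}m-\frac{a^{3}}{\epsilon}+\frac{a^{5}}{2\epsilon^{3}}+\frac{ma^{6}}{2\epsilon^{3}}+\frac{a^{6}}{4\epsilon^{4}},
\end{equation*}
whose leading term is $\tfrac{a^{6}}{4\epsilon^{4}}>0$ as $\epsilon\to0$ with $a$ fixed, and $a^{6}\left(\tfrac{m}{2\epsilon^{3}}+\tfrac{1}{4\epsilon^{4}}\right)>0$ as $a\to\infty$ with $\epsilon$ fixed, for every $m\geq0$ and every $\epsilon>0$; hence $K(p)<0$ in both regimes with no restriction on $\epsilon$, and the same-order cancellations your heuristic glosses over are handled exactly. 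Your homothety reduction of the second limit to the first is correct in the multi-Eguchi--Hanson case, but since the first limit itself is not established, the argument as a whole does not close.
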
 

Hence, we have shown that the natural barriers are not strong enough, not even locally, to prove that compact minimal submanifolds are circle-invariant or contained in one for a generic multi-Eguchi--Hanson or multi-Taub--NUT space.

\subsection*{Acknowledgements}
The author wishes to thank his supervisor Jason D. Lotay for suggesting this project and for his enormous help in its development. The author would also like to thank Gon\c{c}alo Oliveira for pointing out a mistake in the first version of this paper and the referee for the useful comments. This work was supported by the Oxford-Thatcher Graduate Scholarship.

\section{The Gibbons--Hawking Ansatz}
In this section, we will describe the Gibbons--Hawking ansatz. We refer to \cite{LO20} and \cite{GW00} for further details. Note that our construction differs by an orientation choice to the one in \cite{LO20}. 

\subsection{Construction} Let $U$ be an open subset of $\mathbb{R}^3$ and let $\pi:{X}\to U$ be a principal $S^1$-bundle over $U$. Let $\xi$ be the infinitesimal generator of the $S^1$ action and let $\eta\in\Omega^1 ({X},\mathbb{R})$ be a connection for the principal bundle, i.e. $\eta$ is $S^{1}$-invariant and satisfies $\eta(\xi)=1$. It is an immediate consequence of these properties, together with Cartan's formula, that $d\eta$ is horizontal and hence $d\eta=\pi^*\alpha$, for some 2-form $\alpha$ on $U$. Let $\phi$ be a positive $\mathbb{R}$-valued function on $U$ satisfying the monopole equation:
\[
\ast_{\mathbb{R}^3} d\phi=\alpha.
\]
Note that, since $d\alpha=0$, the monopole equation forces $\phi$ to be harmonic with respect to the flat metric on $\mathbb{R}^3$. We now construct a hyperk\"{a}hler structure on ${X}$. If $\{x_i\}_{i=1}^3$ are coordinates on $U\subset\mathbb{R}^3$, then we can define: 
\[
\omega_1=dx_1\wedge\eta+\phi dx_2\wedge dx_3,\hspace{5pt}\omega_2=dx_2\wedge\eta+\phi dx_3\wedge dx_1,\hspace{5pt}\omega_3=dx_3\wedge\eta+\phi dx_1\wedge dx_2.
\]
It is straightforward that $\omega_i^2$ are nowhere vanishing and that $\omega_i\wedge\omega_j=0$, for $i\neq j$. These forms are closed, indeed, for $(i, j, k)$ cyclic permutation, the monopole equation implies:
\[
d\omega_i=-dx_i\wedge d\eta+d\phi\wedge dx_j\wedge dx_k=0.
\]
It is clear that these forms, together with the Riemannian metric:
\[
g=\phi^{-1}\eta^2+\phi g_{\mathbb{R}^3},
\]
induce a hyperk\"{a}hler structure on ${X}$.

As in \cite{LO20}, we compute the structure equations. 
\begin{lemma} \label{Structure}
Let $({X},g)$ be a space constructed by the Gibbons--Hawking ansatz using the harmonic function $\phi$. Let $\{e^i\}_{i=0}^3$ be the orthonormal coframe given by: 
\[
e^0=\phi^{-1/2}\eta, \hspace{15pt} e^i=\phi^{1/2}dx_i \hspace{5pt}i=1,...,3.
\]
Then:

\begin{align*}
  &\nabla_{e_0} e_0=\frac{1}{2\phi^{3/2}}\sum_{i=1}^3 \frac{\partial\phi}{\partial x_i}e_i;\\
    &\nabla_{e_i} e_0=-\frac{1}{2\phi^{3/2}}\sum_{j,k=1}^3 \epsilon_{ijk} \frac{\partial\phi}{\partial x_j}e_k;\\
    \end{align*}
    \begin{align*}
    &\nabla_{e_0} e_i=-\frac{1}{2\phi^{3/2}}\left( \frac{\partial\phi}{\partial x_i}e_0+\sum_{j,k=1}^3 \epsilon_{ijk} \frac{\partial\phi}{\partial x_j}e_k\right);\\
    &\nabla_{e_i} e_j=\frac{1}{2\phi^{3/2}}\left( \frac{\partial\phi}{\partial x_j}e_i-\sum_{k=1}^3 \left(\epsilon_{ijk} \frac{\partial\phi}{\partial x_k}e_0+\delta_{ij}\frac{\partial\phi}{\partial x_k} e_k\right)\right),
\end{align*}

where $\epsilon_{ijk}$ is the permutation symbol and $\{e_i\}_{i=1}^3$ is the orthonormal frame dual to $\{e^i\}_{i=1}^3$.
\end{lemma}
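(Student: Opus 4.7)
The plan is to derive these covariant derivative formulas from Cartan's first structure equation applied to the orthonormal coframe $\{e^a\}_{a=0}^{3}$. Recall that for a torsion-free connection with antisymmetric connection forms (in an orthonormal frame), one has $de^a = -\omega^a{}_b \wedge e^b$ with $\omega^a{}_b = -\omega^b{}_a$, and the covariant derivatives are recovered by $\nabla_{e_a} e_b = \omega^c{}_b(e_a)\, e_c$. So the work reduces to computing $de^0$ and $de^i$, identifying the $\omega^a{}_b$, and then reading off the formulas.

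The first step is the exterior derivative computation. From $e^0 = \phi^{-1/2}\eta$ we get
\[
de^0 = -\tfrac{1}{2}\phi^{-3/2}\, d\phi \wedge \eta + \phi^{-1/2} d\eta,
\]
and now the monopole equation $d\eta = \pi^*(\ast_{\mathbb{R}^3} d\phi) = \sum_{i,j,k} \tfrac{1}{2}\epsilon_{ijk}\,\phi_{x_i} dx_j \wedge dx_k$ lets me rewrite everything in terms of $e^a$ by substituting $dx_i = \phi^{-1/2} e^i$ and $\eta = \phi^{1/2} e^0$. Similarly,
\[
de^i = \tfrac{1}{2}\phi^{-1/2}\, d\phi \wedge dx_i = \tfrac{1}{2}\phi^{-3/2} \sum_j \phi_{x_j} e^j \wedge e^i.
\]
After these substitutions, $de^0$ and $de^i$ are expressed purely as linear combinations of the wedge products $e^a \wedge e^b$ with coefficients involving $\phi^{-3/2}\phi_{x_i}$ and the permutation symbol.

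The second step is to solve the linear system $de^a + \omega^a{}_b \wedge e^b = 0$ subject to $\omega^a{}_b + \omega^b{}_a = 0$. The form of $de^0$ suggests the ansatz
\[
\omega^0{}_i = \tfrac{1}{2}\phi^{-3/2}\Bigl(\phi_{x_i}\, e^0 + \sum_{j,k}\epsilon_{ijk}\,\phi_{x_j}\, e^k\Bigr),
\]
and the form of $de^i$ together with antisymmetry suggests
\[
\omega^i{}_j = \tfrac{1}{2}\phi^{-3/2}\Bigl(\phi_{x_j}\, e^i - \phi_{x_i}\, e^j - \sum_k \epsilon_{ijk}\,\phi_{x_k}\, e^0\Bigr).
\]
A direct check that these satisfy both structure equations and antisymmetry, which by uniqueness of the Levi-Civita connection must give the right answer, is the bulk of the computation.

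The third and final step is purely mechanical: evaluating $\omega^c{}_b$ on each basis vector $e_a$ and assembling $\nabla_{e_a} e_b = \sum_c \omega^c{}_b(e_a)\, e_c$ produces exactly the four formulas in the statement, with the $\delta_{ij}$ term in $\nabla_{e_i} e_j$ arising from the $-\phi_{x_i} e^j$ contribution when $i=j$ forces a regrouping of the diagonal and off-diagonal pieces. I do not anticipate any conceptual obstacle; the only thing requiring care is keeping track of signs in the cyclic permutation symbol and the powers of $\phi$, since an error in either propagates through all four formulas.
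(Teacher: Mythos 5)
Your strategy --- compute $de^0$ and $de^i$, solve the first structure equation $de^a+\omega^a{}_b\wedge e^b=0$ subject to $\omega^a{}_b=-\omega^b{}_a$, and read off $\nabla_{e_a}e_b=\omega^c{}_b(e_a)\,e_c$ --- is exactly the method behind the proof the paper invokes (it simply cites \cite[Lemma 2.2]{LO20}, noting the sign of $\eta$ differs), and your computations of $de^0$ and $de^i$ are correct. However, your ansatz for $\omega^0{}_i$ has the wrong overall sign, and the ``direct check'' you defer to as the bulk of the computation would \emph{fail} as written, not succeed. Substituting your $\omega^0{}_i=+\tfrac{1}{2}\phi^{-3/2}(\phi_{x_i}e^0+\sum_{j,k}\epsilon_{ijk}\phi_{x_j}e^k)$ into the structure equation gives
\[
-\sum_i\omega^0{}_i\wedge e^i=\tfrac{1}{2}\phi^{-3/2}\sum_i\phi_{x_i}\,e^i\wedge e^0-\tfrac{1}{2}\phi^{-3/2}\sum_{i,j,k}\epsilon_{ijk}\phi_{x_i}\,e^j\wedge e^k,
\]
which is precisely $-de^0$ by your own first-step computation; likewise the equation for $de^i$ then fails by the term $\phi^{-3/2}\sum_{j,k}\epsilon_{ijk}\phi_{x_j}\,e^k\wedge e^0$. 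The correct form is $\omega^0{}_i=-\tfrac{1}{2}\phi^{-3/2}(\phi_{x_i}e^0+\sum_{j,k}\epsilon_{ijk}\phi_{x_j}e^k)$, consistent with $\omega^0{}_i(e_0)=\langle\nabla_{e_0}e_i,e_0\rangle=-\tfrac{1}{2}\phi^{-3/2}\phi_{x_i}$ from the statement; your $\omega^i{}_j$ is correct.

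Note this is not a removable convention ambiguity: with your sign one would conclude $\nabla_{e_0}e_0=-\tfrac{1}{2}\phi^{-3/2}\sum_i\phi_{x_i}e_i$, and that particular formula can be checked independently of any orientation or $\epsilon$-sign choices, since $\xi$ is Killing and hence $\nabla_\xi\xi=-\tfrac{1}{2}\nabla|\xi|^2=\tfrac{1}{2}\phi^{-2}\nabla\phi$, which after rescaling by $\phi^{1/2}$ yields the lemma's $+$ sign. So the fix is simply to flip the sign of $\omega^0{}_i$ (equivalently of $\omega^i{}_0$), after which the verification and the final reading-off go through and reproduce all four formulas, including the $\delta_{ij}$ term as you describe. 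The lesson is that the uniqueness argument only certifies forms that \emph{actually pass} the check; since you assert rather than perform it, the proposal as it stands contains an error that the promised computation would have exposed --- exactly the sign sensitivity flagged by the paper's remark that its formulas differ from \cite{LO20} by the choice of sign of $\eta$.
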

\begin{proof}
For a proof, see \cite[Lemma 2.2]{LO20}. Note that the differences arise from the different choice of the sign of $\eta$.
\end{proof}
\subsection{Examples} Here we describe the spaces needed in the following sections.

\begin{example} [Flat metric] Let $U=\mathbb{R}^3\setminus \{0\}$ and let $\phi=1/2r$, where $r=\av{x}_{\mathbb{R}^3}$. By the substitution $\rho=\sqrt{2r}$, we can see that $({X},g)$ is the description in polar coordinates of $(\mathbb{R}^4\setminus \{0\},g_{\mathbb{R}^4})$. It is clear that we can extend the metric $g$ to 0 and obtain the whole $\mathbb{R}^4$. 
\end{example}

\begin{example} [Eguchi--Hanson metric]
Let $p_1, p_2$ be two points in $\mathbb{R}^3$ and let $U=\mathbb{R}^3\setminus\{p_1,p_2\}$. If we define $\phi$ as follows:
\[
\phi=\frac{1}{2\av{x-p_1}_{\mathbb{R}^3}}+\frac{1}{2\av{x-p_2}_{\mathbb{R}^3}},
\]
we obtain the Eguchi--Hanson metric. Once again, we can add back $p_1$ and $p_2$. Usually, the Eguchi--Hanson metric is described as a metric on $T^*S ^2$. An explicit isometry can be found in \cite{Pr79}.
\end{example}

\begin{example} [Multi-Eguchi--Hanson metric] 
Let $\{p_i\}_{i=1}^k$ be $k$ points in $\mathbb{R}^3$ and let $U=\mathbb{R}^3\setminus\{p_i\}_{i=1}^k$. If we define $\phi$ as follows:
\[
\phi=\sum_{i=1}^k \frac{1}{2\av{x-p_i}_{\mathbb{R}^3}},
\]
we obtain the multi-Eguchi--Hanson metric. Analogously to the Eguchi--Hanson metric, we can add back the points removed. 
\end{example}

\begin{example} [Taub--NUT metric]
Let $m$ be a positive real number and let $U=\mathbb{R}^3\setminus\{0\}$. If we define $\phi$ as follows:
\[
\phi=m+\frac{1}{2\av{x}_{\mathbb{R}^3}},
\]
we obtain the Taub--NUT metric. We can add back $0$ and obtain topologically $\mathbb{R}^4$. 
\end{example}

\begin{example} [Multi-Taub--NUT metric]
Let $m$ be a positive real number, let $\{p_i\}_{i=1}^k$ be $k$ points in $\mathbb{R}^3$ and let $U=\mathbb{R}^3\setminus\{p_i\}_{i=1}^k$. If we define $\phi$ as follows:
\[
\phi=m+\sum_{i=1}^k \frac{1}{2\av{x-p_i}_{\mathbb{R}^3}},
\]
we obtain the multi-Taub--NUT metric. As above, we can add back the points removed.
\end{example}

\begin{example} [Multi-centred Gibbons--Hawking space]
Let $m$ be a non-negative real number, let $\{p_i\}_{i=1}^k$ be $k$ points in $\mathbb{R}^3$, let $\{c_i\}_{i=1}^k\subset\mathbb{N}$ and let $U=\mathbb{R}^3\setminus\{p_i\}_{i=1}^k$. If we define $\phi$ as follows:
\[
\phi=m+\sum_{i=1}^k \frac{c_i}{2 \av{x-p_i}_{\mathbb{R}^3}},
\]
we obtain the multi-centred Gibbons-Hawking space. Unless $c_i=1$, it is not possible to add back the points removed.
\end{example}

\section{Minimal Submanifolds} \label{SectionBarrier}
Let $(M,\la\cdot,\cdot\ra)$ be a Riemannian manifold. 
\begin{definition}
A k-dimensional submanifold $\Sigma$ of $M$ is minimal if it is a critical point of the volume functional. By the first variation formula \cite[Theorem 2.4.1]{Sim68}, $\Sigma$ is minimal if and only if  $\H:=\sum_{i=1}^k A(e_i,e_i)$, the mean curvature of $\Sigma$, vanishes. $A$ is the map defined by $A(X,Y):=\nabla_X^\perp Y$ for $X, Y$ vector fields tangent to $\Sigma$, and $\{e_i\}_{i=1}^k$ is a local orthonormal frame of $\Sigma$.
\end{definition}

\subsection{Barriers for minimal submanifolds} \label{Subsectionbarrier}
 
\begin{definition} \label{def k-convex}
A function $f:M\to\mathbb{R}$ is said to be $k$-convex (or $k$-plurisubharmonic) if 
$$\Tr_W \mathrm{Hess} f_x\geq0 \hspace{10pt} \forall x\in M, \hspace{2pt} \forall\hspace{1pt} W\in G(k,T_x M),$$
where $G(k,T_x M)$ is the Grassmannian of $k$-dimensional subspaces of $T_x M$. If the inequality is strict in a set we will say that $f$ is strictly $k$-convex there.
\end{definition}

The following well-known lemma shows that a $k$-convex function is subharmonic when restricted to a $k$-dimensional minimal submanifold. 
\begin{lemma} \label{kconvrestr}
Let $f:M\to\mathbb{R}$ be a $k$-convex function. Then, any orientable $k$-dimensional compact minimal submanifold $\Sigma$ of $M$ is contained in the set where $f$ is not strict. In particular, $f$ is constant on every connected component of $\Sigma$.
\end{lemma}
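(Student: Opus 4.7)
The plan is to show that $f\vert_\Sigma$ is subharmonic on $\Sigma$, apply the maximum principle in the compact setting, and then read off from the equality case that $f$ cannot be strictly $k$-convex on $\Sigma$.

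First I would compute the Laplacian of the restriction. Pick a point $p\in\Sigma$ and a local orthonormal frame $\{e_i\}_{i=1}^k$ of $\Sigma$ near $p$. The standard composition-of-second-derivatives identity for a submanifold gives
\begin{equation*}
\Delta_\Sigma (f\vert_\Sigma)(p)=\sum_{i=1}^k \mathrm{Hess}\, f(e_i,e_i)+\langle \nabla f,\H\rangle,
\end{equation*}
where $\H$ is the mean curvature vector of $\Sigma$. Since $\Sigma$ is minimal, $\H\equiv 0$, so the second term drops out and
\begin{equation*}
\Delta_\Sigma (f\vert_\Sigma)(p)=\mathrm{Tr}_{T_p\Sigma}\mathrm{Hess}\, f_p.
\end{equation*}
Because $T_p\Sigma\in G(k,T_pM)$, the $k$-convexity of $f$ (Definition \ref{def k-convex}) gives $\Delta_\Sigma (f\vert_\Sigma)\geq 0$ on $\Sigma$, i.e.\ $f\vert_\Sigma$ is subharmonic.

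Next I would invoke the maximum principle: since $\Sigma$ is compact (and, being a submanifold in our convention, has no boundary), a smooth subharmonic function on $\Sigma$ attains its maximum, and the Hopf/strong maximum principle forces $f\vert_\Sigma$ to be constant on each connected component. Alternatively, one may integrate $\Delta_\Sigma (f\vert_\Sigma)\geq 0$ against the volume form of $\Sigma$ and use Stokes' theorem together with orientability to conclude the integral is zero, which combined with the pointwise inequality yields $\Delta_\Sigma (f\vert_\Sigma)\equiv 0$ and hence, together with subharmonicity, constancy on connected components.

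Finally, if $f\vert_\Sigma$ is constant on a connected component $\Sigma_0$, then $\Delta_\Sigma (f\vert_\Sigma)\equiv 0$ there, which by the identity above means $\mathrm{Tr}_{T_p\Sigma}\mathrm{Hess}\, f_p=0$ for every $p\in\Sigma_0$. Taking $W=T_p\Sigma\in G(k,T_pM)$ in Definition \ref{def k-convex} shows that the defining inequality is an equality at every point of $\Sigma_0$, so $f$ is not strictly $k$-convex anywhere on $\Sigma$. This is a routine argument with no real obstacle; the only point worth being careful about is the minimality step, ensuring the $\langle\nabla f,\H\rangle$ term genuinely vanishes so that subharmonicity depends only on the $k$-convexity bound in tangent directions.
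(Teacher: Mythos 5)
Your proof is correct and follows essentially the same route as the paper: the identity $\Tr_{\Sigma}\mathrm{Hess}\, f=\Delta_{\Sigma}f-\H(f)$, minimality to kill the mean curvature term, $k$-convexity applied to $W=T_p\Sigma$ to get subharmonicity, and the maximum principle on the compact $\Sigma$ to conclude. Your spelled-out equality-case step (constancy forces $\Tr_{T_p\Sigma}\mathrm{Hess}\, f=0$, so $f$ is nowhere strict on $\Sigma$) is exactly what the paper's terse ``the maximum principle gives the lemma'' implicitly contains.
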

\begin{proof}
Let $\Sigma$ be an orientable $k$-dimensional compact minimal submanifold of $M$. We immediately have that: 
\[
\Tr_{\Sigma} \mathrm{Hess} f=\Delta_{\Sigma}f-\H(f),
\]
where $\H$ is the mean curvature vector of $\Sigma$, and $\Delta_{\Sigma}$ is the Laplace operator of the induced metric on $\Sigma$. It follows from minimality and $k$-convexity that $\Delta_\Sigma f\geq0$. The maximum principle gives the lemma.
\end{proof}

Let $\Omega\subset M$ be a domain with smooth non-empty boundary $\partial \Omega$.

\begin{definition} \label{def boundary convex}
The boundary $\partial\Omega$ is said to be $k$-convex if 
$$\Tr_W  \Gemini_x\geq0 \hspace{10pt} \forall x\in \partial\Omega, \hspace{2pt} \forall\hspace{1pt} W\in G(k,T_x \partial\Omega),$$
where $\Gemini$ is the second fundamental form of the hypersurface $\partial \Omega$ with respect to the inward pointing normal $\nu$, i.e. $\Gemini(X,Y):=\la A(X,Y), \nu\ra$ for all $X,Y$ vectors tangent to $\partial \Omega$. If the inequality is strict in a set we will say that $\partial\Omega$ is strictly $k$-convex there.
\end{definition}

\begin{remark} \label{LevelSetsConvexFunctions}
Let $M$ be an orientable manifold, let $f$ be a (strictly) $k$-convex function of $M$ and let $a$ be a regular value of $f$. The well-known formula for the second fundamental form of the hypersurface $f^{-1}(a)$:
\[
\Gemini=\frac{1}{\av{\nabla f}} \mathrm{Hess} f,
\]
implies that $f^{-1}(a)$ is a (strictly) $k$-convex hypersurface. \end{remark}

Harvey and Lawson obtained a sort of converse of this remark. 
\begin{thm}[Harvey and Lawson {\cite[Theorem 5.7]{HaLa12}}] \label{boundarytointerior}
Let $\partial\Omega$ be everywhere strictly $k$-convex. Then, there is a k-convex function $f\in C^{\infty}(\overline{\Omega})$ that is strict in a neighbourhood of $\partial\Omega$. This function can be constructed such that it constantly achieves its maximum at $\partial\Omega$. \end{thm}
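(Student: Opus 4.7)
The plan is to realise $f$ as a function of the distance to $\partial\Omega$, suitably cut off. Let $d(x):=\mathrm{dist}(x,\partial\Omega)$ for $x\in\overline{\Omega}$. On a tubular neighbourhood $T_\delta:=\{d<\delta\}$ of $\partial\Omega$, $d$ is smooth with $|\nabla d|\equiv 1$; moreover, standard identities identify $\mathrm{Hess}(d)$, at a boundary point $p$, with the zero operator along $\nabla d$ and with $-\Gemini_p$ on $T_p\partial\Omega$. For any smooth $\psi:[0,\infty)\to\mathbb{R}$,
\[
\mathrm{Hess}(\psi\circ d) = \psi'(d)\,\mathrm{Hess}(d) + \psi''(d)\,\nabla d\otimes\nabla d,
\]
so at $p\in\partial\Omega$ the eigenvalues of $\mathrm{Hess}(\psi\circ d)$ are $\psi''(0)$ in the $\nabla d$-direction together with $-\psi'(0)\kappa_i(p)$ along the principal curvature directions of $\partial\Omega$.

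Set $\psi'(0)=-1$ and $\psi''(0)=\Lambda>0$. By Ky Fan's principle, the smallest $k$-trace of $\mathrm{Hess}(\psi\circ d)_p$ is the sum of its smallest $k$ eigenvalues, which here is the minimum of two quantities: the sum of the smallest $k$ of $\{\kappa_i(p)\}_i$ (realised by taking $W\subset T_p\partial\Omega$), and the sum of the smallest $k-1$ of $\{\kappa_i(p)\}_i$ plus $\Lambda$ (realised by taking $W\ni\nabla d$). The former is strictly positive by the strict $k$-convexity of $\partial\Omega$; the latter is strictly positive uniformly in $p$ for $\Lambda$ large enough, thanks to compactness of $\partial\Omega$. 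By continuity, $\psi\circ d$ is then strictly $k$-convex on some smaller tubular neighbourhood $T_{\delta_0}$.

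To globalise, pick $\psi\in C^\infty([0,\infty))$ non-increasing and convex with the above derivatives at $0$ and $\psi$ identically constant on $[\delta_0/2,\infty)$; such a $\psi$ exists by integrating a suitably mollified, non-positive, non-decreasing function whose slope at $0$ is prescribed to be $\Lambda$. Then
\[
f(x) := \begin{cases} \psi(d(x)), & x\in T_{\delta_0}, \\ \psi(\delta_0/2), & x\in \overline{\Omega}\setminus T_{\delta_0/2}, \end{cases}
\]
is a well-defined $C^\infty$ function on $\overline{\Omega}$ that is constant on the complement of $T_{\delta_0/2}$ and attains its maximum $\psi(0)$ constantly on $\partial\Omega$. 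The main obstacle is verifying $k$-convexity on the transition annulus $T_{\delta_0}\setminus T_{\delta_0/2}$: one must shrink $\delta_0$ (and compensatingly enlarge $\Lambda$) so that, by continuity of the second fundamental forms of nearby smooth level sets of $d$, the smallest $k$-sum of their principal curvatures stays strictly positive; then $\psi'(d)\leq 0$ and $\psi''(d)\geq 0$ ensure that both the tangential and the normal contributions to $\mathrm{Hess}(f)$ remain non-negative throughout $T_{\delta_0}$, while $\mathrm{Hess}(f)\equiv 0$ on the complement. The delicate coupling between the size of $\Lambda$ needed for strictness at $\partial\Omega$ and the size of $\delta_0$ needed for level-set curvature control is the only real technicality; both conditions can be met simultaneously because enlarging $\Lambda$ allows $\delta_0$ to shrink as $O(1/\Lambda)$.
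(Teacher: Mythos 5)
Your strategy (a profile of the distance function, flattened into the interior) is sound and is in fact close in spirit to Harvey--Lawson's own argument, which the paper does not reproduce but merely cites; however, your verification on the transition annulus has a genuine gap. The $k$-convexity of $\mathrm{Hess}\,f=\psi'(d)\,\mathrm{Hess}\,d+\psi''(d)\,\nabla d\otimes\nabla d$ does \emph{not} follow from the two summands being separately ``non-negative'' in the sense you invoke. Since $\nabla d$ remains an eigenvector of $\mathrm{Hess}\,f$ throughout the collar, your own Ky Fan reduction shows that the minimal $k$-trace at an interior point $x$ with $d(x)=t$ equals
\[
\min\bigl(\,\av{\psi'(t)}\,S_k(x),\;\av{\psi'(t)}\,S_{k-1}(x)+\psi''(t)\bigr),
\]
where $S_j(x)$ is the sum of the $j$ smallest principal curvatures of the level set through $x$ and the second entry is realised by $k$-planes containing $\nabla d$. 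Strict $k$-convexity of $\partial\Omega$ controls $S_k$ but says nothing about the sign of $S_{k-1}$, which can be negative --- and this is exactly the regime in this paper's applications: the spherical and cylindrical barriers are $3$-convex but not convex, so $S_2<0$ somewhere on them. Because $\psi'$ and $\psi''$ must both vanish at the flattening radius, the conditions $\psi'\le0$, $\psi''\ge0$ alone do not prevent $\av{\psi'(t)}S_{k-1}(x)+\psi''(t)$ from going negative in the middle of the annulus (e.g.\ for a convex profile with $\psi''$ concentrated near $t=0$). Your closing remark about coupling $\Lambda$ and $\delta_0$ does not repair this, since the constraint lives along the whole profile, not just at $t=0$.

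The fix is a pointwise differential inequality rather than sign conditions: demand $\psi''(t)\ge C\,\av{\psi'(t)}$ on $[0,\delta_0]$, where $C\ge\sup\max(0,-S_{k-1})$ over the collar (finite by compactness after shrinking $\delta_0$). This is compatible with flattening: take $-\psi'(t)=e^{-Ct}\chi(t)$ with $\chi$ smooth, non-increasing, $\chi(0)=1$ and $\chi\equiv0$ past $\delta_0/2$; then $\psi''=Ce^{-Ct}\chi-e^{-Ct}\chi'\ge C\av{\psi'}$, both displayed quantities are non-negative throughout the collar, and they are strictly positive near $\partial\Omega$, which also delivers the strictness and boundary-maximum claims. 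This exponential mechanism is precisely how the Harvey--Lawson proof proceeds (composing a defining function with $t\mapsto e^{\lambda t}$), so with this correction your argument becomes a legitimate distance-function variant of theirs. A minor further point: you use compactness of $\partial\Omega$ twice (uniform $\Lambda$, uniform collar width); this matches Harvey--Lawson's hypotheses, but note the paper also applies the theorem to non-compact barriers such as cylinders, where one needs the same estimates to hold uniformly --- as they do there --- or a localisation to the compact support of the competitor.
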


\begin{remark}
Theorem \ref{boundarytointerior} is going to be crucial in our discussion. 
Indeed, it will allow us to reduce the problem of $1$ dimension. 
\end{remark}

\begin{corollary} \label{BarrierSmooth}
Let $\partial\Omega$ be strictly $k$-convex. Then, there are no orientable $k$-dimensional compact minimal submanifolds contained in $\Omega$ with a point tangent to $\partial\Omega$.
\end{corollary}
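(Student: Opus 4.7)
The plan is a short proof by contradiction that combines Theorem \ref{boundarytointerior} with the computation appearing in the proof of Lemma \ref{kconvrestr}. Suppose that $\Sigma\subset\overline{\Omega}$ is an orientable $k$-dimensional compact minimal submanifold containing a point $p\in\Sigma\cap\partial\Omega$ with $T_p\Sigma\subseteq T_p\partial\Omega$. By Theorem \ref{boundarytointerior}, there exists a $k$-convex function $f\in C^{\infty}(\overline{\Omega})$ that is strictly $k$-convex on some neighbourhood $\mathcal{U}$ of $\partial\Omega$ and that attains its maximum constantly along $\partial\Omega$.

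Two features of this $f$ conspire to yield a contradiction. First, since $f(p)=\max_{\overline{\Omega}} f$ and $\Sigma\subset\overline{\Omega}$, the point $p$ is a global maximum of $f|_\Sigma$; as submanifolds in this paper are taken without boundary, $p$ is an \emph{interior} point of $\Sigma$, and the classical second-order maximum principle applied to $f|_\Sigma$ gives $\Delta_\Sigma f(p)\leq 0$. Second, the tangency $T_p\Sigma\subseteq T_p\partial\Omega$ together with $k\leq\dim M-1$ ensures that $T_p\Sigma$ is a genuine $k$-dimensional subspace of $T_pM$, so strict $k$-convexity of $f$ at $p\in\mathcal{U}$ gives $\Tr_{T_p\Sigma}\mathrm{Hess}\,f|_p>0$. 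Combining this with the formula
\[
\Delta_\Sigma f(p)=\Tr_{T_p\Sigma}\mathrm{Hess}\,f|_p-\H(f)|_p=\Tr_{T_p\Sigma}\mathrm{Hess}\,f|_p,
\]
obtained from the proof of Lemma \ref{kconvrestr} using $\H\equiv 0$, forces $\Delta_\Sigma f(p)>0$, contradicting the previous inequality.

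The only subtle point is extracting a \emph{strict} inequality precisely at the tangent point $p$; this is exactly what the strictness clause in Theorem \ref{boundarytointerior} is engineered to deliver. With mere $k$-convexity of $f$ we would obtain only $\Delta_\Sigma f\geq 0$ everywhere, which is perfectly compatible with an interior maximum on $\Sigma$, and the argument would collapse. Once strictness near $\partial\Omega$ is in hand, the contradiction is immediate and no further ingredients beyond the maximum principle and the first-variation identity of Lemma \ref{kconvrestr} are needed.
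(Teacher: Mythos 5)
Your proof is correct and follows the paper's own (one-line) argument, which deduces the corollary from exactly the same two ingredients: the Harvey--Lawson function of Theorem \ref{boundarytointerior} and the subharmonicity computation behind Lemma \ref{kconvrestr}; you merely inline the maximum principle as a pointwise second-derivative test at the tangency point instead of quoting the lemma's conclusion that $\Sigma$ must lie in the non-strict set. The only blemish is the irrelevant aside about $k\leq\dim M-1$ and tangency being needed for $T_p\Sigma$ to be a $k$-plane --- $T_p\Sigma\in G(k,T_pM)$ automatically, so strict $k$-convexity at $p$ applies without it --- but this does not affect the argument.
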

\begin{proof}
It is a straightforward consequence of Theorem \ref{boundarytointerior} and Lemma \ref{kconvrestr}.
\end{proof}

\begin{remark}
When $k=\dim M-1$, $\partial\Omega$ is $k$-convex if and only if it has mean curvature pointing inward. In this setting, Corollary \ref{BarrierSmooth} can be viewed as a direct consequence of the classical avoidance principle for the mean curvature flow. 
\end{remark}

It is well-known that the trace conditions in Definition \ref{def k-convex} and in Definition \ref{def boundary convex} are actually restrictions on the sum of the smallest eigenvalues of the associated matrix. 

\begin{lemma} \label{Linear Algebra translation}
Let $A\in Sym_n(\mathbb{R})$, with ordered eigenvalues $\lambda_1\leq...\leq \lambda_n$. Then, 
$$
\inf_{W\in G(k,\mathbb{R}^n)} \Tr_W A=\lambda_1+...+\lambda_k.
$$

\end{lemma}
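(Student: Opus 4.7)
The statement is the classical Ky Fan characterization of the sum of the smallest $k$ eigenvalues of a symmetric matrix, so the proof is essentially spectral theory plus a short optimization argument. My plan is as follows.

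First, I would use the spectral theorem to pick an orthonormal basis $\{v_1,\ldots,v_n\}$ of $\mathbb{R}^n$ consisting of eigenvectors of $A$, with $Av_i=\lambda_iv_i$. Given any $W\in G(k,\mathbb{R}^n)$, choose an orthonormal basis $\{w_1,\ldots,w_k\}$ of $W$ and write $w_j=\sum_i c_{ji}v_i$. Then
\[
\Tr_W A=\sum_{j=1}^k\la Aw_j,w_j\ra=\sum_{j=1}^k\sum_{i=1}^n\lambda_i c_{ji}^2=\sum_{i=1}^n\lambda_i d_i,
\qquad d_i:=\sum_{j=1}^k c_{ji}^2.
\]
This rewrites the trace as a linear functional in the coefficients $d_i$.

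Next, I would identify the constraint set. Clearly $d_i\geq 0$, and summing over $i$ gives $\sum_i d_i=\sum_j \|w_j\|^2=k$. The key inequality $d_i\leq 1$ follows by extending $\{w_1,\ldots,w_k\}$ to an orthonormal basis $\{w_1,\ldots,w_n\}$ of $\mathbb{R}^n$: then $\sum_{j=1}^n c_{ji}^2=\|v_i\|^2=1$, so $d_i=\sum_{j=1}^k c_{ji}^2\leq 1$. Therefore $(d_1,\ldots,d_n)$ lies in the polytope $P=\{d\in[0,1]^n:\sum d_i=k\}$.

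Finally, I would minimise the linear functional $d\mapsto\sum_i\lambda_i d_i$ over $P$. Since the $\lambda_i$ are ordered non-decreasingly, a standard rearrangement argument (or the observation that the minimum of a linear functional on a polytope is attained at a vertex, and the vertices of $P$ are the $0/1$-vectors with exactly $k$ ones) shows the minimum equals $\lambda_1+\cdots+\lambda_k$, attained by $d=(1,\ldots,1,0,\ldots,0)$. Choosing $W=\mathrm{span}(v_1,\ldots,v_k)$ realises this value, so the infimum is both a lower bound and attained, completing the proof.

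No step is a real obstacle here; the only mildly non-trivial point is the upper bound $d_i\leq 1$, which relies on completing the partial orthonormal frame to a full one, and so I would state that carefully.
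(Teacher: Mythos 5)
Your proof is correct: the reduction of $\Tr_W A$ to the linear functional $\sum_{i}\lambda_i d_i$ minimized over the polytope $\{d\in[0,1]^n:\sum_i d_i=k\}$, with the key constraint $d_i\leq 1$ justified by completing $\{w_1,\ldots,w_k\}$ to a full orthonormal basis, is the standard Ky Fan argument, and the infimum is indeed attained at $W=\mathrm{span}(v_1,\ldots,v_k)$. The paper states this lemma without any proof, treating it as well known, so there is nothing to diverge from; your write-up supplies precisely the expected argument, with the one subtle point (the bound $d_i\leq 1$) handled correctly.
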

\begin{remark}
It is obvious that $k$-convexity implies $l$-convexity for all $l\geq k$. $1$-convexity will be simply called convexity. 
\end{remark}

Similarly to \cite{LS20}, we can use the generalized barrier principle \cite[Theorem 14.1]{Wh15} to extend previous results to the geometric measure theory setting. In this way, we can also drop the orientability condition in Corollary \ref{BarrierSmooth}. We recall that the integral Brakke flow is a weak version of the mean curvature flow, where stationary integral varifolds are constant solutions.

\begin{corollary} \label{BarrierGMT}
Let $\partial\Omega$ be strictly $k$-convex. Then, there are no stationary compactly supported integral varifolds of dimension $k$ contained in $\Omega$ with support intersecting $\partial\Omega$.
\end{corollary}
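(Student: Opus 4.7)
The plan is to mimic the proof of Corollary~\ref{BarrierSmooth}, replacing the smooth maximum principle (Lemma~\ref{kconvrestr}) with its varifold counterpart, namely the generalized barrier principle of White \cite[Theorem 14.1]{Wh15}.

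The first step is to apply Theorem~\ref{boundarytointerior} to the strictly $k$-convex boundary $\partial\Omega$, obtaining an $f\in C^\infty(\overline\Omega)$ that is $k$-convex on $\overline\Omega$, strictly $k$-convex on some open neighbourhood $U$ of $\partial\Omega$, and which attains its maximum $M:=\max_{\overline\Omega} f$ identically on $\partial\Omega$. By Definition~\ref{def k-convex}, this translates into the pointwise inequality $\Tr_W \mathrm{Hess}\,f_x\ge 0$ for every $x\in\overline\Omega$ and every $k$-plane $W\subset T_xM$, with strict inequality on $U$.

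Next, I would argue by contradiction: assume $V$ is a stationary compactly supported integral $k$-varifold with $\mathrm{supp}(V)\subset\overline\Omega$ meeting $\partial\Omega$ at some point $p$. Viewing $V$ as a constant integral Brakke flow, the $k$-convexity of $f$ is exactly the supersolution hypothesis required by \cite[Theorem 14.1]{Wh15}. The conclusion of the barrier principle, in its strong maximum form, is that $f$ cannot attain the value $M$ on $\mathrm{supp}(V)$ at the contact point $p$ unless $f\equiv M$ on a relatively open subset of $\mathrm{supp}(V)$ about $p$. But $p\in U$ where $f$ is strictly $k$-convex, so the first variation identity for the stationary varifold $V$, applied to $\nabla f$ localised near $p$, forces $\Tr_{T_xV}\mathrm{Hess}\,f(x)>0$ on a set of positive $\mathcal H^k$-measure in $\mathrm{supp}(V)\cap U$, which is incompatible with $f$ being locally constant on $\mathrm{supp}(V)$ near $p$.

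The main obstacle is extracting the correct ``tangential contact'' form of White's theorem: its standard statement concerns non-crossing for evolving Brakke flows, whereas here one needs a static strong maximum principle for stationary varifolds that touch a strictly $k$-convex barrier. The translation is essentially formal once one notes that a stationary integral varifold is a constant Brakke flow, so that the strict $k$-convexity in the neighbourhood $U$ upgrades ``non-crossing'' to ``instantaneous separation'', which is impossible for a constant flow; but pinning down this form of the principle is the one point that requires care. Everything else is a direct transcription of the proof of Corollary~\ref{BarrierSmooth}, and orientability of $\mathrm{supp}(V)$ is no longer needed because White's statement is purely measure-theoretic.
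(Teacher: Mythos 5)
Your proposal is correct and takes essentially the same route as the paper: obtain the Harvey--Lawson function from Theorem \ref{boundarytointerior}, view the stationary varifold as a constant integral Brakke flow, and apply White's barrier principle \cite[Theorem 14.1]{Wh15} with the time-independent function to contradict the fact that it attains its maximum constantly on $\partial\Omega$. The additional first-variation argument you include to rule out a plateau of $f$ on the support is redundant in the paper's reading of White's theorem, where strict $k$-convexity near $\partial\Omega$ already forces the maximum of $f$ along the (constant) flow to strictly decrease, which is the desired contradiction.
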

\begin{proof}
Assume by contradiction that there exists such integral varifold $V$ with support $T$, and let ${u}$ be the function constructed in Theorem \ref{boundarytointerior}. Applying \cite[Theorem 14.1]{Wh15}  to the constantly $V$ Brakke flow and to the function $u$, which is independent from time, we have that $u$ restricted to $T$ cannot have a maximum at the points of $\partial\Omega\cap T\neq\emptyset$. This contradicts Theorem \ref{boundarytointerior}.
\end{proof}

\subsection{Strong stability} 
We now focus on the second variation of the volume.
\begin{definition} \label{definition stability}
A minimal submanifold $\Sigma$ of $M$ is stable if the second variation is a non-negative quadratic form. By the second variation formula \cite[Theorem 3.2.2]{Sim68}, $\Sigma$ is stable if and only if 
\begin{align*}
\int_{\Sigma} \av{\nabla^\perp V}^2 -\la \mR(V), V\ra  - \la\mA(V),V\ra\geq 0,
\end{align*}
for all $V$, compactly supported vector fields normal to $\Sigma$. Here, $\mR$ is the normal trace of the Riemann tensor,  $\mR(V):=\Tr_{\Sigma} (R_M(\cdot ,V)\cdot)^{\perp}$, and $\mA(V)$ is the Simons' operator, which can be expressed, in a local orthonormal frame $\{e_i\}$ of $\Sigma$, as $\mA(V)=\sum_{i,j}\la A(e_i,e_j),V\ra$ $A(e_i,e_j)$.
\end{definition}

We now deal with a stronger condition than stability, which was first studied by Tsai and Wang in \cite{TW}. This condition is strictly related to subsection \ref{Subsectionbarrier}.
\begin{definition}
A minimal submanifold $\Sigma$ of $M$ is said to be strongly stable, if $-\mR-\mA$ is a (pointwise) positive operator on the normal bundle of $\Sigma$.
\end{definition}

\begin{remark}
It is clear that strongly stable submanifolds are in particular stable.
\end{remark}

In hyperk\"{a}hler 4-manifolds, the strong stability condition for surfaces greatly simplifies.
\begin{proposition} [Tsai and Wang {\cite[Appendix A.1]{TW}}] \label{Strong Stability in HK}
Let $(M,g)$ be a 4-dimensional hyperk\"{a}hler manifold and let $\Sigma$ be a minimal surfaces in $M$. Then, $\Sigma$ is strongly stable if and only if the Gaussian curvature of $\Sigma$ is everywhere positive.
\end{proposition}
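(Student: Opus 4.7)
The plan is to prove the pointwise algebraic identity
\[
-\mathcal{R} - \mathcal{A} \;=\; K_\Sigma \cdot \mathrm{Id}_{N\Sigma}
\]
on the rank-$2$ normal bundle of $\Sigma$; given this identity, both implications of the proposition are immediate, since a scalar operator on a rank-$2$ bundle is positive exactly when its scalar is positive.

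Fix $p \in \Sigma$ and pick an oriented orthonormal frame $\{e_1,e_2,e_3,e_4\}$ of $T_pM$ with $\{e_1,e_2\}$ spanning $T_p\Sigma$ and $\{e_3,e_4\}$ spanning $N_p\Sigma$. Set $a_\alpha = \langle A(e_1,e_1),e_\alpha\rangle$ and $b_\alpha = \langle A(e_1,e_2),e_\alpha\rangle$ for $\alpha \in \{3,4\}$; minimality gives $A(e_2,e_2) = -A(e_1,e_1)$, from which a short expansion yields $\mathcal{A}_{\alpha\beta} = 2(a_\alpha a_\beta + b_\alpha b_\beta)$.

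\emph{Step 1: reducing $\mathcal{R}$.} A hyperk\"ahler $4$-manifold is Ricci-flat (as $\mathrm{Sp}(1) \subset \mathrm{SU}(2)$). The identity $\mathrm{Ric}(e_3,e_4) = 0$ gives $\mathcal{R}_{34} = R_{1314} + R_{2324} = 0$, while combining the diagonal Ricci-flat equations with the symmetries of the Riemann tensor yields $\mathcal{R}_{33} = \mathcal{R}_{44} = -R_{3434}$ and $R_{3434} = R_{1212} = K_M(T_p\Sigma)$. Hence $-\mathcal{R} = K_M(T_p\Sigma) \cdot \mathrm{Id}_{N_p\Sigma}$. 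This step uses only Ricci-flatness, not the full hyperk\"ahler structure.

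\emph{Step 2: reducing $\mathcal{A}$.} Here we use the hyperk\"ahler condition essentially. Every oriented $2$-plane in $\mathbb{R}^4 = \mathbb{H}$ is a complex line for a unique element $\mathcal{J}$ of the sphere of compatible complex structures; choose the adapted frame so that $\mathcal{J} = I$ makes $T_p\Sigma$ complex, and then use the remaining members of the hyperk\"ahler triple to normalize $e_3 = Je_1$ and $e_4 = Ke_1$. Writing $A$ in this quaternionic frame and exploiting the relations $I^2 = J^2 = K^2 = IJK = -1$ together with the symmetry and the minimality of $A$ produces the algebraic identities
\[
a_1^2 + b_1^2 \;=\; a_2^2 + b_2^2, \qquad a_1 a_2 + b_1 b_2 \;=\; 0,
\]
equivalent to $\mathcal{A}_{33} = \mathcal{A}_{44}$ and $\mathcal{A}_{34} = 0$; that is, $\mathcal{A} = \tfrac{1}{2}|A|^2 \cdot \mathrm{Id}_{N_p\Sigma}$.

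\emph{Step 3: conclusion.} The Gauss equation for a minimal surface reads $K_\Sigma = K_M(T_p\Sigma) - \tfrac12|A|^2$, so combining the two scalar identities produces $-\mathcal{R} - \mathcal{A} = K_\Sigma \cdot \mathrm{Id}_{N_p\Sigma}$, as desired. Since $p$ was arbitrary, the proposition follows.

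The main obstacle is Step 2: the reduction $-\mathcal{R} = K_M \cdot \mathrm{Id}$ holds in any $4$-dimensional Ricci-flat manifold, but the corresponding scalar identity for $\mathcal{A}$ genuinely requires the vanishing of the self-dual Weyl tensor of $M$ (equivalent to the hyperk\"ahler hypothesis in the chosen orientation) and is not a consequence of minimality and Ricci-flatness alone. The clean way to extract the two displayed quadratic identities is to work in the quaternionic frame adapted to the complex structure that makes $T_p\Sigma$ complex, exploiting the identifications of $T_p\Sigma$ with $N_p\Sigma$ provided by $J$ and $K$.
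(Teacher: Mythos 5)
Your Step 2 is false, and with it the central identity $-\mR-\mA=K_\Sigma\,\Id_{N\Sigma}$. Take the flat hyperk\"ahler $\mathbb{H}=\R^4$ and a non-totally-geodesic minimal surface inside the totally geodesic $\R^3\times\{0\}$ (a catenoid, say). At a non-umbilic point, in a principal frame with $e_3=\nu$ the normal in $\R^3$ and $e_4$ the constant fourth direction, one has $a=(\kappa,0)$, $b=(0,0)$ with $\kappa\neq0$, so $\mA=2(a\otimes a+b\otimes b)=\mathrm{diag}(2\kappa^2,0)$, which is not $\tfrac12\av{A}^2\Id$ in any basis; your claimed identities (which should read $a_3^2+b_3^2=a_4^2+b_4^2$ and $a_3a_4+b_3b_4=0$ --- the indices $1,2$ are a slip) assert $\kappa^2=0$. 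The root cause is that the pointwise fact that $T_p\Sigma$ is a complex line for some $\mathcal{J}$ in the twistor sphere constrains nothing about $A$: the second fundamental form is first-order data of the tangent plane field, and the relation $A(\mathcal{J}X,Y)=\mathcal{J}A(X,Y)$ you would need holds only when $\Sigma$ is genuinely $\mathcal{J}$-holomorphic (so that $T\Sigma$ is $\mathcal{J}$-invariant in a neighbourhood), not when $T_p\Sigma$ is $\mathcal{J}_p$-complex at one point. Your closing claim that $W^+=0$ forces the identity conflates constraints on the ambient curvature (which do govern $\mR$, as in your Step 1) with constraints on $\mA$, which is embedding data invisible to the curvature at a point. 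The correct pointwise statement is that the eigenvalues of $-\mR-\mA$ are
\[
K_\Sigma \pm \sqrt{\left(\av{a}^2-\av{b}^2\right)^2+4\la a,b\ra^2},
\]
so pointwise strong stability is strictly stronger than $K_\Sigma>0$ unless the defect term vanishes, which happens exactly at points where $A$ is complex-linear (as along holomorphic curves) --- the catenoid above has defect $\kappa^4\neq0$.

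What survives: Steps 1 and 3 are correct, and since Ricci-flatness alone gives $-\mR=K_M(T_p\Sigma)\,\Id$ while the Gauss equation gives $K_\Sigma=K_M(T_p\Sigma)-\tfrac12\av{A}^2$, the smallest eigenvalue above is at most $K_\Sigma$, so the implication ``strongly stable $\Rightarrow K_\Sigma>0$'' goes through (and needs only Ricci-flatness, as you note). The converse is where the real work lies, and it cannot be pointwise linear algebra: one must show that under the hypotheses the defect term vanishes identically, equivalently that the K\"ahler-angle map $u=(\omega_1(e_1,e_2),\omega_2(e_1,e_2),\omega_3(e_1,e_2)):\Sigma\to S^2$ is constant, i.e.\ $\Sigma$ is holomorphic for a fixed compatible complex structure. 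That is the actual content of the sources the paper cites in lieu of a proof --- \cite[Appendix A.1]{TW} together with the special-Lagrangian-type argument of \cite[Proposition 3.1]{TW1}, where the scalar identity is obtained by identifying $N\Sigma$ with tangent/cotangent data of $\Sigma$ via the \emph{parallel} K\"ahler forms along a holomorphic (equivalently, suitably calibrated) surface. To repair your argument you would need to supply that global step; as written, your Step 2 silently assumes it.
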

\begin{proof}

For a proof see \cite[Appendix A.1]{TW} and the special Lagrangian type argument of \cite[Proposition 3.1]{TW1}.
\end{proof}

\begin{corollary} \label{Strongly Stable surfaces are Spheres}
Let $(M,g)$ be a 4-dimensional hyperk\"{a}hler manifold and let $\Sigma$ be a strongly stable orientable compact minimal surface in $M$. Then, $\Sigma$ is topologically a sphere. 
\end{corollary}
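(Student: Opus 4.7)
The plan is to combine Proposition \ref{Strong Stability in HK} with the Gauss--Bonnet theorem. By the proposition, strong stability of $\Sigma$ as a minimal surface in the hyperk\"ahler 4-manifold $(M,g)$ is equivalent to the pointwise positivity of the Gaussian curvature $K$ of $\Sigma$ with respect to the induced metric. So I inherit the fact that $K>0$ everywhere on $\Sigma$.

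Next, since $\Sigma$ is orientable and compact, I apply the Gauss--Bonnet theorem, which gives
\[
\int_\Sigma K \, dA = 2\pi\,\chi(\Sigma).
\]
The integrand is strictly positive and $\Sigma$ has positive area, so the left-hand side is strictly positive. Hence $\chi(\Sigma)>0$. For a compact orientable surface $\chi(\Sigma) = 2-2g$, where $g$ denotes the genus, so the only possibility is $g=0$, i.e.\ $\Sigma$ is diffeomorphic to $S^2$.

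There is no real obstacle here: once the equivalence with positivity of $K$ is invoked, the argument is a one-line application of Gauss--Bonnet. The only subtlety worth flagging is that $\Sigma$ must be assumed to have no boundary (which is the standing convention in the paper, as recalled in the footnote of the Introduction) so that Gauss--Bonnet applies without a geodesic-curvature boundary term.
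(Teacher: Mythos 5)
Your proposal is correct and follows exactly the paper's argument: Proposition \ref{Strong Stability in HK} gives pointwise positive Gaussian curvature, and the Gauss--Bonnet theorem then forces $\chi(\Sigma)>0$, hence $\Sigma\cong S^2$. The extra details you supply (the formula $\chi(\Sigma)=2-2g$ and the no-boundary convention) are fine but do not change the route.
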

\begin{proof}
Proposition \ref{Strong Stability in HK} implies that $\Sigma$ has positive Gaussian curvature. Gauss--Bonnet theorem implies that $\Sigma$ needs to be a sphere. 
\end{proof}

We now highlight the connection between strong stability and barriers. 
\begin{proposition} [Tsai and Wang {\cite[Proposition 4.1]{TW}}] \label{TW local uniqueness}
Let $\Sigma\subset M$ be a strongly stable orientable compact minimal submanifold of dimension $k$. Then, there exists a neighbourhood of $\Sigma$ such that the square of the distance function from $\Sigma$ is $k$-convex in such a neighbourhood. Moreover, it is strict outside $\Sigma$.
\end{proposition}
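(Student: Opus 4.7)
The plan is to work in Fermi coordinates around $\Sigma$ and verify $k$-convexity of $\psi$ via a second-order expansion along normal geodesics. First, by the tubular neighbourhood theorem, the normal exponential map $\Phi(p,V)=\exp_p^\perp(V)$ is a diffeomorphism from a neighbourhood of the zero section of $N\Sigma$ onto a neighbourhood of $\Sigma$ in $M$. In Fermi coordinates $(x^i,y^\alpha)$ adapted to $p_0\in\Sigma$ (with $x^i$ geodesic normal coordinates on $\Sigma$ at $p_0$ and $y^\alpha$ the normal components in an orthonormal frame of $N\Sigma$ parallel along $\Sigma$-geodesics), the distance from $\Sigma$ is realised by the obvious normal geodesic, so $\psi=\sum_\alpha(y^\alpha)^2$ identically. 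From $(\mathrm{Hess}\,\psi)_{IJ}=\partial_I\partial_J\psi-\Gamma^K_{IJ}\partial_K\psi$ one reads off that at points of $\Sigma$ (where $y=0$) the Hessian vanishes on $T\Sigma$ and equals $2\langle\cdot,\cdot\rangle$ on $N\Sigma$; hence on $\Sigma$ the sum of the smallest $k$ eigenvalues is $0$, so $\psi$ is $k$-convex at $\Sigma$ but not strictly.

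The next step is to analyse $\mathrm{Hess}\,\psi$ at $q_t := \exp_{p_0}^\perp(tV)$ for unit $V\in N_{p_0}\Sigma$ and small $t>0$. By Lemma \ref{Linear Algebra translation}, the sum of the smallest $k$ eigenvalues equals $\inf_W\Tr_W\mathrm{Hess}\,\psi(q_t)$ over $k$-planes $W\subset T_{q_t}M$. Since $\mathrm{Hess}\,\psi$ at $p_0$ has a spectral gap between its $k$-dimensional kernel $T_{p_0}\Sigma$ and its $(\dim M-k)$-dimensional $2$-eigenspace $N_{p_0}\Sigma$, for $t$ small the minimising $W$ lies close to the parallel transport of $T_{p_0}\Sigma$ along the normal geodesic, and a Schur-complement/perturbation argument reduces the problem to computing, to order $t^2$, the tangential block $(\mathrm{Hess}\,\psi)_{ij}(q_t)$ together with the mixed block that contributes to the Schur correction.

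The key calculation: expanding $\Gamma^\gamma_{ij}(0,tV)$ about $\Sigma$ and using $\Gamma^\gamma_{ij}(0,0)=h^\gamma_{ij}(p_0)$, one obtains
\begin{equation*}
\sum_{i=1}^{k}(\mathrm{Hess}\,\psi)_{ii}(q_t) \;=\; -2t\,k\,\langle V,\H\rangle_{p_0} \;+\; 2t^2\,\langle(-\mR-\mA)V,V\rangle_{p_0} \;+\; O(t^3),
\end{equation*}
where the linear term vanishes by minimality and the identification of the quadratic coefficient with the strong-stability quadratic form comes from combining the Gauss equation with the Fermi-coordinate expansion of the ambient metric. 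Strong stability gives $\langle(-\mR-\mA)V,V\rangle>0$ on the unit normal bundle; compactness of $\Sigma$ then yields uniform $c,t_0>0$ so that the above tangential trace is bounded below by $c\,t^2$ for $0<t\leq t_0$. The Schur-complement step shows the sum of the smallest $k$ eigenvalues of $\mathrm{Hess}\,\psi(q_t)$ equals this trace up to a lower-order correction, so it is strictly positive on the punctured tubular neighbourhood. Combined with the boundary case on $\Sigma$ itself, this gives $k$-convexity on a neighbourhood of $\Sigma$ and strict $k$-convexity off $\Sigma$.

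The main obstacle is the identification of the $O(t^2)$ coefficient of the tangential trace with $\langle(-\mR-\mA)V,V\rangle$. This requires careful bookkeeping in Fermi coordinates: the intrinsic Riemann component arising from $\partial_\beta\Gamma^\gamma_{ij}$, together with the Schur-complement contribution from the mixed block (a quadratic expression in the second-fundamental-form components $h^\alpha_{ij}$), must rearrange via the Gauss equation into exactly $-\mR-\mA$, and not into some other linear combination of intrinsic and extrinsic curvature invariants. Once this cancellation is pinned down, everything else is routine.
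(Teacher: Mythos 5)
Your proposal is correct and follows essentially the same route as the proof the paper defers to (Tsai--Wang \cite[Proposition 4.1]{TW}), namely Fermi/``partial'' geodesic coordinates in which $\psi=\sum_\alpha (y^\alpha)^2$, the expansion $\Tr_L \mathrm{Hess}\,\psi = 2\langle(-\mR-\mA)V,V\rangle t^2 + \mO(t^3)$ (the paper itself uses exactly this, via \cite[Proposition 2.6]{TW}, in its proof of the converse Proposition \ref{converse strong stability}), and a spectral-gap argument to pass from the tangential trace to arbitrary $k$-planes. The only blemish is cosmetic: since $\sum_i \Gamma^\gamma_{ii}(0,0)=\H^\gamma$, the linear term should read $-2t\langle V,\H\rangle$ rather than $-2tk\langle V,\H\rangle$, which is harmless as it vanishes by minimality in either convention.
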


We now show that a converse holds.

\begin{proposition} \label{converse strong stability}
Let $\Sigma\subset M$ be an orientable compact minimal submanifold of dimension $k$ such that $-\mR-\mA$ is a negative operator at a point $p\in \Sigma$. Denoting by $\psi$ the square of the distance function from $\Sigma$, then, for every neighbourhood of $\Sigma$ there exists a point in it where $\psi$ is not $k$-convex.
\end{proposition}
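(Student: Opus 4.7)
The strategy is to exhibit, arbitrarily close to $\Sigma$, a point $q$ and a $k$-plane $W\subset T_qM$ on which the trace of $\mathrm{Hess}\,\psi$ is strictly negative, which by Definition \ref{def k-convex} obstructs $k$-convexity at $q$. By hypothesis, pick a unit normal vector $V_0\in N_p\Sigma$ with $\la(-\mR-\mA)V_0,V_0\ra=:-c_0<0$. Fix an orthonormal basis $\{f_i\}_{i=1}^k$ of $T_p\Sigma$, and for small $t\geq0$ set $q_t:=\exp_p^\perp(tV_0)$; let $f_i(t)$ be the parallel transport of $f_i$ along the normal geodesic $s\mapsto\exp_p^\perp(sV_0)$. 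The natural candidate $k$-plane is $W_t:=\mathrm{span}\{f_1(t),\dots,f_k(t)\}\subset T_{q_t}M$.

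The plan is then to Taylor-expand $h(t):=\sum_{i=1}^k\mathrm{Hess}\,\psi|_{q_t}(f_i(t),f_i(t))$ around $t=0$ in Fermi coordinates along $\Sigma$. Three facts drive the expansion. First, the classical normal form for the square of the distance function near a submanifold forces $\mathrm{Hess}\,\psi|_p$ to act as twice the identity on $N_p\Sigma$ and as $0$ on $T_p\Sigma$, so $h(0)=0$. Second, differentiating once along the normal geodesic reduces $h'(0)$ to a multiple of $\la\H,V_0\ra$, which vanishes by minimality of $\Sigma$. Third, and decisively, the quadratic coefficient is exactly what Tsai--Wang compute in the proof of Proposition \ref{TW local uniqueness}: retracing their Jacobi field calculation along normal geodesics and retaining the identity at leading order yields
\[
h''(0)=2\la(-\mR-\mA)|_pV_0,V_0\ra=-2c_0
\]
(up to a positive normalisation constant from their conventions). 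Consequently $h(t)=-c_0t^2+O(t^3)<0$ for all sufficiently small $t>0$.

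Given any prescribed neighbourhood $\mathcal{U}$ of $\Sigma$, one may choose such a $t$ so small that $q_t\in\mathcal{U}$; the $k$-plane $W_t$ at $q_t$ then has $\mathrm{Tr}_{W_t}\mathrm{Hess}\,\psi|_{q_t}<0$, so $\psi$ fails to be $k$-convex at $q_t$, as required. The main obstacle is extracting the quadratic identity $h''(0)=-2c_0$ as an \emph{equality} rather than a one-sided bound: Tsai--Wang's proof of $k$-convexity under strong stability estimates the relevant Hessian trace from below by a positive multiple of $t^2\la(-\mR-\mA)V_0,V_0\ra$, but their leading-order term is in fact an identity in $t$. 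Once this identity is secured, reading its sign off in the opposite direction converts the implication of Proposition \ref{TW local uniqueness} into its converse.
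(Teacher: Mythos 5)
Your proposal is correct and takes essentially the same route as the paper: your $k$-plane $W_t$, obtained by parallel transporting an orthonormal basis of $T_p\Sigma$ along the normal geodesic, is exactly the plane $L=\mathrm{span}\{e_1,\dots,e_k\}$ of the paper's ``partial'' geodesic frame, and your quadratic expansion $h(t)=-c_0t^2+O(t^3)$ is the same application of Tsai--Wang's Proposition 2.6 (with $h(0)=0$ and $h'(0)=0$ by minimality) that the paper performs at a point $(0,y)$. The only cosmetic difference is that you expand along a single normal direction $V_0$ whereas the paper writes the expansion for a general small normal vector $y$; both conclude by reading off the sign of $\la(-\mR-\mA)_pV,V\ra$.
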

\begin{proof}
Let $\{e_1,...,e_k,e_{k+1},...,e_n\}$ be the orthonormal "partial" geodesic frame in a neighbourhood of $p$ in $M$ as in \cite[Section 2.2]{TW}. Essentially, this frame is constructed as follows:
\begin{enumerate}
\item Let $\{e_1,...,e_k\}$ be an oriented orthonormal basis of $T_p \Sigma$. By using the parallel transport with respect to $\nabla^T$ along the radial geodesics of $\Sigma$, we obtain a local orthonormal frame of $T\Sigma$ in a neighbourhood of $p$ in $\Sigma$. We still denote this frame by $\{e_1,...,e_k\}$.
\item  Let $\{e_{k+1},...,e_n\}$ be an orthonormal basis of $N_p\Sigma$. By using the parallel transport with respect to $\nabla^\perp$ along radial geodesics of $\Sigma$, we obtain a local orthonormal frame of $N\Sigma$ in a neighbourhood of $p$ in $\Sigma$.  We still denote this frame by $\{e_{k+1},...,e_n\}$.
\item Finally, given the local orthonormal frame for $TM|_\Sigma$ constructed before $\{e_1,...,e_n\}$, we use the parallel transport with respect to $\nabla$ along the normal geodesics to obtain a local orthonormal frame of $TM$ in a neighbourhood of $p$ in $M$. We still denote this frame by $\{e_1,...,e_n\}$.
\end{enumerate}
Let $\{\omega^1,...,\omega^n\}$ be the dual coframe. It is clear that, by using the exponential map in a similar way, we also obtain local "partial" geodesic coordinates, which we denote by $(x_1,...,x_k,y_{k+1},...,y_n)$. Observe that $\psi=\sum_{i=k+1}^n (y_i)^2$ and that $d\psi=2\sum_{i=k+1}^n (y_i) \omega^i$.

At any point $(0,y)$, consider the $k$-plane $L:=\mathrm{span} \{e_1,...,e_k\}$. We claim that, for $\av{y}$ small enough, $\Tr_L \mathrm{Hess} \psi<0$. Since $e_j(\psi)=d\psi(e_j)\equiv 0$ for all $j<k$, we can use \cite[Proposition 2.6]{TW} as in \cite[Proposition 4.1]{TW} to obtain:
\begin{align*}
\Tr_L \mathrm{Hess} \psi&=2\left(\left\la (-\mR-\mA)_p(\sum_{\alpha=k+1}^n y_\alpha e_\alpha),\sum_{\alpha=k+1}^n y_\alpha e_\alpha \right\ra\right)+\mO(\av{y}^3)\\
&\leq -c_0 \av{y}^2+C\av{y}^3, 
\end{align*}
where $c_0$ and $C$ are positive constants. The inequality follows from negativity of $(-\mR-\mA)_p$.

It is clear that, for $\av{y}$ small enough, $\Tr_L \mathrm{Hess} \psi<0$. 
\end{proof}

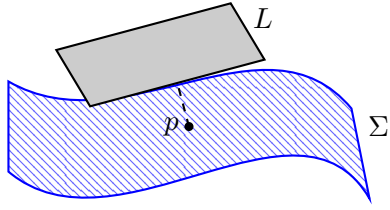
\begin{figure}[t]
\begin{tikzpicture} [scale=1.2]

\draw [-,blue,thick,pattern=north west lines, pattern color=blue!60] (-2,-0.5) to [out=-40,in=120] (2,-0.8) -- 
 (1.8,0.2) to[out=130,in=-30]  (-2,0.5)  -- cycle;
\fill[black] (0,0) circle (0.05cm) node[black,left] {$p$};
\node at (2.1,0) {$\Sigma$};

\draw[thick,dashed, rotate=15] (0,0) -- (0,1);

\fill[gray!40, rotate=15] (-1.2,1.2)-- (-1,0.5)-- (1,0.5) -- (0.8,1.2)-- cycle;
\draw[thick, rotate=15] (-1.2,1.2)-- (-1,0.5)-- (1,0.5) -- (0.8,1.2)-- cycle;

\node at (0.6,1.2) [black, right] {$L$};
\end{tikzpicture}
\caption{Plane that violates convexity in Proposition \ref{converse strong stability}.}\label{Plane Converse TW}
\end{figure}

\begin{proof}[Proof of Proposition \ref{Final converse Strong Stability}]
The chain rule yields: 
\[
\mathrm{Hess} (f\circ\psi)=f'(\psi) \mathrm{Hess} \psi +f''(\psi) \nabla \psi \circ \nabla \psi.
\]
Let $L$ be the plane as in the Proposition \ref{converse strong stability}. Since $\nabla \psi=2\sum_{i=k+1}^n (y_i) \omega^i \equiv 0$ on $L$, we have:
\begin{align*}
\Tr_L \mathrm{Hess} (f\circ\psi)&=f'(\psi) \Tr_L \mathrm{Hess} \psi\leq f'(\psi)(-c_0 \av{y}^2+C\av{y}^3),
\end{align*}
where $c_0$ and $C$ are the same positive constants of the proof of Proposition \ref{converse strong stability}. It follows that $\Tr_L \mathrm{Hess} (f\circ\psi)<0$ for $\av{y}$ small enough. 

Let $\epsilon\geq0$ and let $h\in C^{\infty}$, with $\sum_{l=0}^2 \av{\nabla^l h}_{(0,y)}\leq \epsilon \av{y}^2$. Then:
\begin{align*}
\Tr_L \mathrm{Hess} (f\circ\psi+h)=\Tr_L \mathrm{Hess} (f\circ\psi)+\Tr_L \mathrm{Hess} h<0,
\end{align*}
for $\av{y}$ and $\epsilon$ small enough. 

This implies that  $f\circ\psi+h$ is not $k$-convex and we can conclude.
\end{proof}

\begin{remark}
As we know where the non-convex points will occur, Proposition \ref{Final converse Strong Stability} holds for every function that is close to a function of the distance in a neighbourhood of $p$. 
\end{remark}

\begin{corollary} \label{converse Level Sets}
Let $\Sigma\subset M$ be an orientable compact minimal submanifold of dimension $k$ such that $-\mR-\mA$ is a negative operator at a point $p\in \Sigma$. Denoting by $\psi$ the square of the distance function from $\Sigma$, then, the level sets of $\psi$, corresponding to small enough values of $\psi$, are not $k$-convex with respect to the domain containing $\Sigma$.
\end{corollary}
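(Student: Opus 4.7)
The plan is to combine Remark \ref{LevelSetsConvexFunctions} with the pointwise estimate already obtained in the proof of Proposition \ref{converse strong stability}. For $c>0$ sufficiently small, the level set $S_c:=\psi^{-1}(c)$ is a smooth hypersurface by the tubular neighbourhood theorem, and the relevant domain is $\Omega_c:=\{\psi<c\}$, whose inward unit normal along $\partial\Omega_c=S_c$ is $-\nabla\psi/|\nabla\psi|$. Applying Remark \ref{LevelSetsConvexFunctions} (with the appropriate sign convention coming from the inward normal), one has, on vectors tangent to $S_c$,
\[
\Gemini=\frac{1}{\av{\nabla\psi}}\,\mathrm{Hess}\,\psi.
\]

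Next, I would reuse the partial geodesic frame $\{e_1,\dots,e_n\}$ built in the proof of Proposition \ref{converse strong stability} around the point $p\in\Sigma$ where $-\mR-\mA$ is negative. At an off-submanifold point $q=(0,y)$ with $|y|$ small and positive, the gradient $\nabla\psi=2\sum_{i=k+1}^n y_i\,e_i$ is purely normal to $\Sigma$, hence the $k$-plane $L:=\mathrm{span}\{e_1,\dots,e_k\}$ satisfies $L\perp\nabla\psi$ and therefore $L\subset T_qS_{|y|^2}$. The key inequality already established in that proof,
\[
\Tr_L\mathrm{Hess}\,\psi\le -c_0\av{y}^2+C\av{y}^3,
\]
is strictly negative for $|y|$ sufficiently small, whence
\[
\Tr_L\Gemini=\frac{1}{\av{\nabla\psi}}\Tr_L\mathrm{Hess}\,\psi<0
\]
at $q$. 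By Lemma \ref{Linear Algebra translation}, this means that the sum of the smallest $k$ eigenvalues of $\Gemini_q$ is strictly negative, so $S_{|y|^2}$ is not $k$-convex with respect to the domain $\Omega_{|y|^2}$ containing $\Sigma$.

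Letting $|y|$ range over arbitrarily small positive values produces level sets $S_c$ for arbitrarily small $c>0$ that all fail to be $k$-convex, which is the assertion. The analytic content -- the Hessian estimate near $p$ -- has already been carried out in Proposition \ref{converse strong stability}, so there is essentially no new obstacle; the only task is to translate that estimate from the Hessian of $\psi$ into a statement about the second fundamental form of the level set via Remark \ref{LevelSetsConvexFunctions}, and to verify that the distinguished $k$-plane $L$ used in that proof is indeed tangent to the level set at $q$, which follows because $L$ is by construction orthogonal to the radial direction $\nabla\psi$.
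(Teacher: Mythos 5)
Your proof is correct and takes essentially the same route as the paper's: both observe that the $k$-plane $L$ from the proof of Proposition \ref{converse strong stability} is tangent to the level set because $\nabla\psi$ is normal to it, and then transfer the strictly negative trace of $\mathrm{Hess}\,\psi$ over $L$ to the second fundamental form via the formula $\Gemini=\frac{1}{\av{\nabla \psi}}\mathrm{Hess}\,\psi$ of Remark \ref{LevelSetsConvexFunctions}. The additional details you supply (smoothness of small level sets via the tubular neighbourhood theorem, the sign convention for the inward normal, and the appeal to Lemma \ref{Linear Algebra translation}) are just explicit elaborations of the paper's terse argument.
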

\begin{proof}
Since an outward normal to any level set of $\psi$ is $\nabla\psi$, it follows that the $k$-plane $L$, as chosen in the proof of Proposition \ref{converse strong stability}, is formed by tangent vectors to the level set. We conclude by Proposition \ref{converse strong stability} and the well-known formula: 
\[
\Gemini=\frac{1}{\av{\nabla \psi}} \mathrm{Hess}\psi,
\]
where $\Gemini$ is the second fundamental form of the level set with respect to the inward pointing normal. 
\end{proof}

\begin{remark}
Given $f$ as in Proposition \ref{Final converse Strong Stability}, the same holds for $f\circ\psi$ instead of $\psi$. It is clear that this does not matter as $f\circ\psi$ and $\psi$ have the same level sets.
\end{remark}

\subsection{Minimal submanifolds and the Gibbons--Hawking ansatz}
We will now discuss minimal submanifolds in the spaces constructed via the Gibbons--Hawking ansatz. We first deal with hypersurfaces.

\begin{lemma} \label{S1invarianthypersurfaces}
Let $(X,g)$ be a space given by the Gibbons--Hawking ansatz associated to the harmonic function $\phi$ on $U\subset\mathbb{R}^3$, let $N$ be an $S^1$-invariant hypersurface in $(X,g)$ and let $\Sigma:=\pi(N)$ be the associated surface in $U$. Then, $$\Vol_X (N)=2\pi \Vol_{(U,\phi^{1/2}g_{\mathbb{R}^3})}(\Sigma).$$ Moreover, $N$ is minimal in $(X,g)$ if and only if $\Sigma$ is minimal in $(U,\phi^{1/2}g_{\mathbb{R}^3})$.
\end{lemma}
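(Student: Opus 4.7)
The plan is to decompose the induced metric on $N$ using the principal $S^1$-bundle structure and then apply a Fubini argument. Since $N$ is $S^1$-invariant, the generator $\xi$ is tangent to $N$, and $N=\pi^{-1}(\Sigma)$. At each point, $T_pN$ splits orthogonally into the vertical line $\mathbb{R}\xi$ and a horizontal 2-plane that projects isomorphically onto $T_{\pi(p)}\Sigma$. From $g=\phi^{-1}\eta^2+\phi g_{\mathbb{R}^3}$ one sees directly that the restriction to the horizontal part is the pullback of $\phi g_{\mathbb{R}^3}|_\Sigma$, while the vertical direction has squared length $\phi^{-1}$.

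For the volume identity, I would pick the induced orthonormal coframe $e^0=\phi^{-1/2}\eta$ together with the horizontal lift of an orthonormal coframe for $(\Sigma,\phi g_{\mathbb{R}^3})$. The induced volume form on $N$ is therefore
\[
\mathrm{vol}_N \;=\; \phi^{-1/2}\,\eta\wedge\pi^*\mathrm{vol}^{\phi g_{\mathbb{R}^3}}_\Sigma \;=\; \phi^{-1/2}\,\eta\wedge\pi^*\!\bigl(\phi\,\mathrm{vol}^{g_{\mathbb{R}^3}}_\Sigma\bigr) \;=\; \phi^{1/2}\,\eta\wedge\pi^*\mathrm{vol}^{g_{\mathbb{R}^3}}_\Sigma,
\]
using that for a $2$-surface a conformal rescaling by $\lambda^2$ multiplies the induced area form by $\lambda^2$. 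A Fubini argument over the principal $S^1$-bundle, together with $\int_{\pi^{-1}(x)}\eta=2\pi$ (which is immediate from $\eta(\xi)=1$ and $\xi$ generating the $2\pi$-periodic $S^1$-action), yields
\[
\Vol_X(N)=2\pi\int_\Sigma \phi^{1/2}\,\mathrm{vol}^{g_{\mathbb{R}^3}}_\Sigma = 2\pi\,\Vol_{(U,\phi^{1/2}g_{\mathbb{R}^3})}(\Sigma),
\]
since the $2$-dimensional volume form of $\phi^{1/2}g_{\mathbb{R}^3}$ on $\Sigma$ equals $\phi^{1/2}\,\mathrm{vol}^{g_{\mathbb{R}^3}}_\Sigma$.

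For the minimality equivalence, the key point is that the mean curvature vector $H$ of $N$ is itself $S^1$-invariant, because $N$ and $g$ are. I would argue that $H\equiv 0$ on $N$ iff the first variation of $\Vol_X$ at $N$ vanishes along every $S^1$-invariant compactly supported normal vector field: one direction is obvious, and the converse follows by an averaging trick — for any test normal field $V$, the $S^1$-average $\widetilde V$ is $S^1$-invariant and $\int_N\langle H,V\rangle=\int_N\langle H,\widetilde V\rangle$ by invariance of $H$. Under this identification, $S^1$-invariant variations of $N$ are in bijection with variations of $\Sigma$ in $U$, and the volume formula just established turns the first variation of $\Vol_X$ into $2\pi$ times the first variation of $\Vol_{(U,\phi^{1/2}g_{\mathbb{R}^3})}$ at $\Sigma$. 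Hence $N$ is minimal iff $\Sigma$ is.

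The main subtlety is this last averaging step: it is what lets us reduce minimality against \emph{all} variations to minimality against \emph{$S^1$-invariant} ones, which is what the volume formula handles. The rest is a straightforward unpacking of the Gibbons–Hawking metric and a conformal-scaling count of powers of $\phi$.
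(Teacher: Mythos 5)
Your proposal is correct, and for the volume identity it is essentially the paper's argument: the paper takes the orthonormal coframe $\{\phi^{1/2}\alpha,\phi^{1/2}\beta,\phi^{-1/2}\eta\}$ on $N$, writes $d\Vol_N=\phi^{1/2}\alpha\wedge\beta\wedge\eta$, and integrates over the fibers, which is exactly your coframe/Fubini computation with the same conformal power count ($\phi^{1/2}$ for the area form of $\phi^{1/2}g_{\mathbb{R}^3}$ on a surface). The one genuine difference is the minimality equivalence: the paper disposes of it in one line by citing Hsiang--Lawson \cite[Theorem 1]{HsLa71}, which says that an invariant submanifold is minimal if and only if it is stationary under equivariant variations, whereas you reprove this reduction from scratch via the averaging trick. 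Your argument is sound: $H$ is equivariant because $S^1$ acts by isometries preserving $N$, compactness of $S^1$ makes the average $\widetilde V$ well defined, and $\int_N\la H,V\ra=\int_N\la H,\widetilde V\ra$ follows from equivariance of $H$ together with the change of variables under the isometric action, so vanishing of the first variation on invariant fields forces $H\equiv 0$; you are also right that invariant normal fields are horizontal (they are orthogonal to $\xi$) and hence correspond exactly to compactly supported variations of $\Sigma$ in $U$. What your route buys is self-containedness --- you prove precisely the special case of the principle of symmetric criticality that is needed, rather than quoting it --- at the cost of a slightly longer write-up; the paper's citation achieves the same reduction with no work, and the two are otherwise interchangeable.
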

\begin{proof}
Let $\alpha,\beta$ be a positively oriented local orthonormal coframe of $\Sigma\subset U$ with respect to the Euclidean metric. It is clear, under the obvious identification, that $\{\phi^{1/2}\alpha,\phi^{1/2}\beta, \phi^{-1/2}\eta\}$ is a positive oriented local orthonormal coframe for $N$. At the level of volume forms we have: 
\[
d\Vol_N=\phi^{1/2}\alpha\wedge\beta\wedge\eta.
\]
Integrating, the desired formula follows easily.  

By \cite[Theorem 1]{HsLa71}, we see that $N$ is minimal if and only if it is stationary with respect to compactly supported $S^1$-equivariant variations. It is clear that compactly supported $S^1$-equivariant variations correspond to compactly supported variations of $U$.
\end{proof}

\begin{example}
To the knowledge of the author, the only known examples of circle-invariant minimal hypersurfaces in the spaces constructed by the Gibbons--Hawking ansatz are the totally geodesic hypersurfaces given by the symmetries of $(U,\phi)$. 
For example, in the multi-Eguchi--Hanson and in the multi-Taub--NUT spaces, if the singular points of $\phi$ admit a plane of symmetry, that plane correspond to a circle-invariant, possibly singular when containing characterizing points, minimal hypersurface.
\end{example}

\begin{remark}
By the formula for the scalar curvature under a conformal change of metric and by the harmonicity of $\phi$, we observe that $(U,\phi^{1/2} g_{\mathbb{R}^3})$ is incomplete and has positive scalar curvature.
\end{remark}

\begin{lemma}
There are no strongly stable minimal hypersurfaces in the spaces constructed via the Gibbons--Hawking ansatz.
\end{lemma}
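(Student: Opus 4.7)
The plan is to exploit two essentially trivial facts in combination: that the Gibbons--Hawking spaces are hyperk\"ahler, hence Ricci-flat, and that a hypersurface has a one-dimensional normal bundle. The first forces the curvature term $\mathcal{R}$ to vanish on normal vectors, while the second means $-\mathcal{R}-\mathcal{A}$ is just multiplication by a scalar. We will see that this scalar is $-|A|^{2}$, which can never be strictly positive.

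Concretely, let $N\subset X$ be a minimal hypersurface with unit normal $\nu$ and let $\{e_{1},e_{2},e_{3}\}$ be a local orthonormal frame of $TN$. Since the normal bundle is a line bundle, the endomorphisms $\mathcal{R}$ and $\mathcal{A}$ of $N\Sigma$ are determined by $\langle\mathcal{R}(\nu),\nu\rangle$ and $\langle\mathcal{A}(\nu),\nu\rangle$. Writing the second fundamental form as $A(e_{i},e_{j})=h_{ij}\nu$, the definition of $\mathcal{A}$ in Definition \ref{definition stability} gives
\[
\mathcal{A}(\nu)=\sum_{i,j}h_{ij}^{2}\,\nu=|A|^{2}\,\nu.
\]
For $\mathcal{R}$, the definition yields $\langle\mathcal{R}(\nu),\nu\rangle=\sum_{i=1}^{3}\langle R_{X}(e_{i},\nu)e_{i},\nu\rangle$, which, after matching conventions by comparing with the classical second variation formula for hypersurfaces, equals $\mathrm{Ric}_{X}(\nu,\nu)$. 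Because $(X,g)$ is hyperk\"ahler its holonomy lies in $\mathrm{SU}(2)$, so $X$ is Ricci-flat and $\mathcal{R}(\nu)=0$.

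Combining these, on the normal bundle the operator $-\mathcal{R}-\mathcal{A}$ is multiplication by $-|A|^{2}\leq 0$. For $N$ to be strongly stable this would have to be pointwise strictly positive, which is impossible at any point (it is nonpositive everywhere, and vanishes precisely at totally geodesic points). Hence no strongly stable minimal hypersurface can exist. There is no real obstacle in the argument; the only care required is in fixing the sign convention for $\mathcal{R}$, which is pinned down by requiring the stability inequality of Definition \ref{definition stability} to reproduce the standard Jacobi operator $\Delta V+(|A|^{2}+\mathrm{Ric}_{X}(\nu,\nu))V$ on hypersurfaces.
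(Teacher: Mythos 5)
Your proof is correct and takes essentially the same approach as the paper, which likewise reduces to the hypersurface identity $(-\mR-\mA)(\nu)=(-\Ric(\nu,\nu)-\av{A}^2)\nu$ and concludes immediately from the Ricci-flatness of hyperk\"ahler metrics. The only difference is that you derive this identity (including fixing the sign convention for $\mR$ against the second variation formula) where the paper simply quotes it as well known.
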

\begin{proof}
In the hypersurface case, it is well known that the operator simplifies as:
$$(-\mR-\mA)(\nu)=(-\Ric(\nu,\nu)-\av{A}^2),$$
where $\nu$ is a local unit normal. As the spaces constructed by the Gibbons--Hawking ansatz are hyperk\"{a}hler, and hence Ricci-flat, we immediately see that the operator cannot be positive.
\end{proof}

We now turn our attention to surfaces.

\begin{lemma}[Lotay and Oliveira {\cite[Lemma 4.1]{LO20}}] \label{S1invariantsurfaces}
Let $(X,g)$ be a space given by the Gibbons--Hawking ansatz associated to the harmonic function $\phi$ on $U\subset\mathbb{R}^3$, let $N$ be an $S^1$-invariant surface in $(X,g)$ and let $\gamma:=\pi(N)$ be the associated curve in $U$. Then, $$\Vol_X (N)=2\pi \Length_{(U,g_{\mathbb{R}^3})}(\gamma).$$ Moreover, $N$ is minimal in $(X,g)$ if and only if $\gamma$ is a geodesic (i.e. a straight segment) in $(U,g_{\mathbb{R}^3})$.
\end{lemma}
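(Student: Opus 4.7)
The plan is to mirror the proof of Lemma \ref{S1invarianthypersurfaces}, reducing by one further dimension so that the whole calculation is essentially one-dimensional on the base. I parametrize $\gamma$ by Euclidean arc length $s\in[0,L]$, with $L=\Length_{(U,g_{\mathbb{R}^3})}(\gamma)$, and parametrize the $S^1$-fibres by the flow of $\xi$ with parameter $\theta\in[0,2\pi)$. This yields a local parametrization of $N=\pi^{-1}(\gamma)$ with coordinate vector fields $\partial_s=\tilde\gamma'$, the horizontal lift of $\gamma'$, and $\partial_\theta=\xi$.

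Using $g=\phi^{-1}\eta^2+\phi\,g_{\mathbb{R}^3}$ together with $\eta(\xi)=1$, $\eta(\tilde\gamma')=0$ and the fact that horizontal and vertical vectors are $g$-orthogonal, one immediately computes
\[
g(\tilde\gamma',\tilde\gamma')=\phi, \qquad g(\xi,\xi)=\phi^{-1},\qquad g(\tilde\gamma',\xi)=0,
\]
so the induced metric on $N$ is $g|_N=\phi(\gamma(s))\,ds^2+\phi(\gamma(s))^{-1}\,d\theta^2$ and
\[
d\Vol_N=\sqrt{\phi\cdot\phi^{-1}}\,ds\wedge d\theta=ds\wedge d\theta.
\]
The $\phi$-factors cancel cleanly, so integrating yields $\Vol_X(N)=2\pi L$, which is the claimed volume formula.

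For the minimality characterization, I would invoke \cite[Theorem 1]{HsLa71}: an $S^1$-invariant submanifold is minimal in $(X,g)$ if and only if it is stationary under compactly supported $S^1$-equivariant variations. Such variations of $N$ are in natural bijection with compactly supported variations of $\gamma$ in $U$, and by the volume formula above the functional they induce on curves is precisely $2\pi$ times the Euclidean length of $\gamma$. The Euler--Lagrange equation for Euclidean length is the geodesic equation, so $N$ is minimal in $(X,g)$ if and only if $\gamma$ is a straight segment in $U\subset\mathbb{R}^3$.

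The main (minor) obstacle is bookkeeping: one must check that the local-product parametrization is consistent with the principal bundle structure and that the $S^1$-equivariant variations of $N$ really biject with variations of $\gamma$. Both are immediate from the fact that $\pi:X\to U$ is a principal $S^1$-bundle with infinitesimal generator $\xi$ and that $N=\pi^{-1}(\gamma)$, so no substantive difficulty arises beyond the metric computation above.
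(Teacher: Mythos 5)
Your proposal is correct and follows essentially the same route as the paper: the paper's proof is simply ``it follows as in Lemma \ref{S1invarianthypersurfaces}'', i.e.\ observe that $\{\phi^{1/2}\alpha,\phi^{-1/2}\eta\}$ is an orthonormal coframe for $N$ so that $d\Vol_N=\alpha\wedge\eta$ (your coordinate computation $\phi\,ds^2+\phi^{-1}\,d\theta^2$ with the $\phi$-factors cancelling is the same calculation), and then invoke \cite[Theorem 1]{HsLa71} to reduce minimality to stationarity under $S^1$-equivariant variations, which correspond to variations of $\gamma$ in $U$ with functional $2\pi$ times Euclidean length.
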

\begin{proof}
It follows as in Lemma \ref{S1invarianthypersurfaces}.
\end{proof}

\begin{example} \label{Level set example}
In the multi-Eguchi--Hanson and in the multi-Taub--NUT spaces, every $S^1$-invariant surface projecting to a straight line is minimal. If such a line connects two of the singular points of $\phi$, it is clear that the related surface is compact and topologically a sphere. In the Eguchi--Hanson case, this segment corresponds to the zero section of $T^\ast S^2$ and the level sets of the distance function from it are ellipsoids in the Gibbons--Hawking ansatz setting \cite{Pr79} (see Figure \ref{Equivalence EH}).
\end{example}

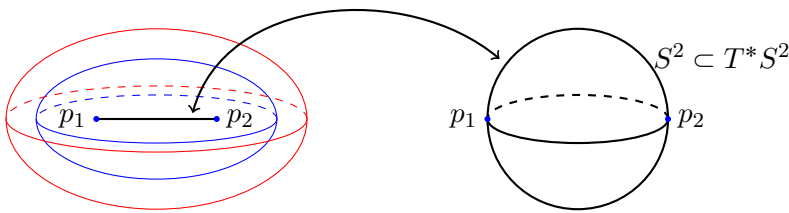
\begin{figure}[t] 
\begin{tikzpicture} [scale=0.8]
\draw[thick] (-1,0,0)--(1,0,0);
\fill[blue] (-1,0,0) circle (0.05cm) node[black,left] {$p_1$};
\fill[blue] (1,0,0) circle (0.05cm) node[black,right] {$p_2$};
\draw[blue] (0,0,0) ellipse (2cm and 1cm);
\draw[blue, dashed] (2,0,0) arc(0:180:2cm and 0.4cm);
\draw[blue] (-2,0,0) arc(180:360:2cm and 0.4cm);
\draw[red] (0,0,0) ellipse (2.5cm and 1.5cm);
\draw[red, dashed] (2.5,0,0) arc(0:180:2.5cm and 0.55cm);
\draw[red] (-2.5,0,0) arc(180:360:2.5cm and 0.55cm);

\draw[thick] (7,0,0) circle (1.5 cm);
\node at (9.4,1.05,0) {$S^2\subset T^\ast S^2$};
\draw[thick] (5.5,0,0) arc (180:360:1.5 cm and 0.4cm);
\draw[thick, dashed] (8.5,0,0) arc (0:180:1.5 cm and 0.4cm);
\fill[blue] (5.5,0,0) circle (0.05cm) node[black,left] {$p_1$};
\fill[blue] (8.5,0,0) circle (0.05cm) node[black,right] {$p_2$};

\draw[thick, <->] (0.6,0.1) to [out=75,in=140] (5.7,1);
\end{tikzpicture}
\caption{Equivalence of Eguchi--Hanson metric to two centre Gibbons--Hawking metric.}\label{Equivalence EH}
\end{figure}

In the Eguchi--Hanson space, Tsai and Wang completely characterized compact minimal submanifolds. Indeed, they showed that they are contained in the only circle-invariant compact minimal surface.  

\begin{lemma}[Tsai and Wang {\cite{TW18}}] \label{EHconvexfunction}
In the Eguchi--Hanson space, the square of the distance function from the unique $S^1$-invariant minimal surface is strictly convex.
\end{lemma}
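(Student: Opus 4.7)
The plan is to exploit symmetries to reduce strict convexity of $\psi$ to a Hessian computation on the base $U=\mathbb{R}^3\setminus\{p_1,p_2\}$. Since both $N$ and the hyperk\"{a}hler metric are $S^1$-invariant, $\psi$ descends to a function on $U$; the rotational symmetry of the Eguchi--Hanson configuration around the line $\overline{p_1p_2}$ further forces $\psi = F(s)$, where $s := r_1+r_2$ and $r_i := \av{x-p_i}_{\mathbb{R}^3}$. This is consistent with Example \ref{Level set example}, which recalls that the distance level sets are the confocal ellipsoids with foci $p_1,p_2$. Via the Prasad isometry \cite{Pr79} with $T^\ast S^2$ (or a direct analysis of the eikonal equation $\av{\nabla^X\psi}^2 = 4\psi$), one can pin down $F$; for our purposes only the qualitative properties $F(2a)=0$ on $N$ and $F'(s),F''(s)>0$ for $s>2a$ are needed, where $2a := \av{p_1-p_2}_{\mathbb{R}^3}$.

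Next I would compute $\mathrm{Hess}\,\psi$ in the orthonormal frame $\{e_0,e_1,e_2,e_3\}$ of Lemma \ref{Structure}. A direct calculation using the covariant derivatives there shows that for any $S^1$-invariant $f$ descending from $U$,
\[
(\mathrm{Hess}\,f)_{00}=-\tfrac{1}{2\phi^{2}}\la\nabla\phi,\nabla f\ra,\qquad (\mathrm{Hess}\,f)_{0i}=\tfrac{1}{2\phi^{2}}(\nabla\phi\times\nabla f)_i,
\]
with the horizontal $3\times 3$ block given by $\phi^{-1}\partial_i\partial_j f$ plus corrections bilinear in the Euclidean gradients of $\phi$ and $f$. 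By axial symmetry it suffices to check positive-definiteness at points of the form $x=(x_1,0,x_3)$ with $x_1\geq0$, where $\partial_2\phi=\partial_2\psi=0$ and only $(\mathrm{Hess}\,\psi)_{02}$ among the $e_0$-horizontal cross terms survives. The horizontal block then decomposes as $F'(s)\,\mathrm{Hess}_{\mathbb{R}^3}s + F''(s)\,ds\otimes ds$ plus the above corrections, so that $F'>0$ contributes positively along directions tangent to the ellipsoids and $F''>0$ along their normals.

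The last step is to verify strict positive-definiteness of the resulting $4\times 4$ matrix on $X\setminus N$, which I would do by a Schur-complement argument pitting $F''$ against the single surviving off-diagonal entry. The hard part will be confirming that the $F''(s)\,ds\otimes ds$ contribution genuinely dominates the $e_0$-$e_2$ cross term via a Cauchy--Schwarz-type estimate: this uses the quantitative positivity of $F''$, not merely its sign, and is where the explicit Prasad identification is actually needed. Strict convexity then holds throughout the smooth locus $X\setminus N$ of $\psi$, which is the content of the lemma.
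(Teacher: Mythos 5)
Your strategy --- descend to $U$, write $\psi=F(s)$ with $s=r_1+r_2$, and check a $4\times4$ Hessian in the frame of Lemma \ref{Structure} --- is viable and close in spirit to Tsai--Wang's original computation (note the paper itself offers no proof of this lemma, citing \cite{TW18}; the relevant in-paper computation is Lemma \ref{EllipsoidBarrierLemma}). Your frame formulas for $(\mathrm{Hess}\,\psi)_{00}$ and $(\mathrm{Hess}\,\psi)_{0i}$ do follow correctly from Lemma \ref{Structure}, and the reduction to points $(x_1,0,x_3)$ by axial symmetry is sound. But there is a genuine gap: the decisive positivity estimate is explicitly deferred (``the hard part will be confirming\dots''), and your closing paragraph contradicts your earlier claim that only the signs $F',F''>0$ are needed --- signs which you in any case never establish. (Pinning down $F$ means solving the eikonal ODE; in prolate spheroidal coordinates one finds $F'(s)^2\,(s^2-4a^2)=2s\,F(s)$, whose analysis you do not carry out. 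Nor do you address the horizontal correction terms, bilinear in $\nabla\phi$ and $\nabla\psi$, which are not sign-definite.) A proof whose central inequality is flagged as ``to be confirmed'' is an outline, not a proof.

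Moreover, the difficulty is misallocated, and fixing this both closes the gap and removes the need for the ``quantitative Prasad identification'' you invoke. Since $\av{\nabla\psi}^2=4\psi$ wherever $\psi$ is smooth, differentiating gives $\mathrm{Hess}\,\psi(\nabla\psi,X)=2\la\nabla\psi,X\ra$ for every $X$: thus $\nabla\psi$ is an exact eigenvector of the Hessian with eigenvalue $2$, the normal direction decouples identically, and $F''$ never competes with the $e_0$--$e_2$ cross term. That cross term is \emph{tangent} to the level sets (both $e_0$ and $u$ are tangent to the circle-invariant ellipsoid hypersurface $N_r$), and on the tangential block one has $\mathrm{Hess}\,\psi=\av{\nabla\psi}\,\Gemini$ by Remark \ref{LevelSetsConvexFunctions}. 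Hence strict convexity of $\psi$ away from $N$ is exactly equivalent to strict convexity of the ellipsoid hypersurfaces, which is the paper's Lemma \ref{EllipsoidBarrierLemma} in the case $m=0$: there the surviving off-diagonal entry $-(2\phi)^{-1}\la u\times\nabla_{\mathbb{R}^3}\phi,\tilde\nu\ra$ is your $e_0$--$e_2$ term, and the Schur-type domination you need is the determinant condition (\ref{E4}), pitting $(\mathrm{Hess}\,\psi)_{00}(\mathrm{Hess}\,\psi)_{22}$ against $(\mathrm{Hess}\,\psi)_{02}^2$ --- a computation done via Lemma \ref{SFF S^1 invariant}, the explicit spheroidal parametrization, and Sylvester's criterion, with no knowledge of $F$ whatsoever.
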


\begin{thm}[Tsai and Wang {\cite{TW18}} for the smooth case, Lotay and Schulze {\cite{LS20}} for the GMT case] \label{TheoremnonexistenceEH}
Let $(X,g)$ be the Eguchi-Hanson space. Then, compact minimal submanifolds (compactly supported stationary integral varifolds) are contained in the unique $S^1$-invariant compact minimal surface.

\end{thm}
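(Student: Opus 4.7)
The proof rests on Lemma \ref{EHconvexfunction}: the squared distance function $\psi$ from the unique $S^1$-invariant compact minimal surface $\Sigma_0 \subset X$ is strictly convex on the Eguchi--Hanson space. Since strict $1$-convexity means every eigenvalue of $\mathrm{Hess}\,\psi$ is positive, $\psi$ is then strictly $k$-convex for every $k \geq 1$ at every point off $\Sigma_0$. At a point of $\Sigma_0$ itself the two tangential eigenvalues of $\mathrm{Hess}\,\psi$ vanish while the two normal eigenvalues are positive, so by Lemma \ref{Linear Algebra translation}, $\psi$ is strictly $k$-convex everywhere on $X$ for $k \geq 3$ and strictly $k$-convex only off $\Sigma_0$ for $k \in \{1,2\}$.

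For the smooth case, let $N$ be an orientable compact minimal submanifold of dimension $k$. When $k \in \{1,2\}$, Lemma \ref{kconvrestr} applied to $\psi$ forces $N$ to lie in the set where $\psi$ fails to be strictly $k$-convex, namely $\Sigma_0$. When $k = 3$ the same lemma would require $N$ to be contained in the empty set, so no such $N$ exists; and $k = 4$ is ruled out by non-compactness of $X$. The non-orientable case reduces to the orientable one by passing to the oriented double cover, which remains compact and minimal.

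For the GMT case, suppose for contradiction that a compactly supported stationary integral $k$-varifold $V$ has support $T$ not contained in $\Sigma_0$. Then $t^{\ast} := \max_T \psi > 0$ is attained; set $\Omega := \{\psi < t^{\ast}\}$, so that $T \subseteq \overline{\Omega}$ and $T \cap \partial\Omega \neq \emptyset$. By Remark \ref{LevelSetsConvexFunctions} the hypersurface $\partial\Omega$ is strictly $1$-convex (with respect to the inward normal into $\Omega$), hence strictly $k$-convex for every $k \geq 1$. This contradicts Corollary \ref{BarrierGMT}, and therefore $T \subseteq \Sigma_0$.

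The only non-trivial input in this argument is Lemma \ref{EHconvexfunction} itself, which is the content of Tsai and Wang's work; everything else is a direct application of the barrier machinery developed in Section \ref{SectionBarrier}. The key structural observation is that strict $1$-convexity of the squared distance to $\Sigma_0$ is strong enough to control all relevant codimensions simultaneously, so no separate analysis is needed for curves, surfaces and hypersurfaces.
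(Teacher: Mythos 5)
Your argument is correct and is essentially the approach the paper itself takes: the theorem is stated there only as a citation of \cite{TW18} and \cite{LS20}, but the paper's proof of its two-point generalization (Theorem \ref{Theoremnnonexistence2P}) is precisely your argument, since by Example \ref{Level set example} and the remark following Lemma \ref{EllipsoidBarrierLemma} your sublevel sets $\{\psi<t^{\ast}\}$ are bounded by the circle-invariant ellipsoids whose strict convexity is the content of Lemma \ref{EHconvexfunction}, and the contradiction is likewise drawn from Corollary \ref{BarrierGMT}. The only hairline points, neither of which is a gap: every $t^{\ast}>0$ is automatically a regular value of $\psi$ (in the Eguchi--Hanson space $\psi$ is smooth with $\av{\nabla\psi}=2\sqrt{\psi}\neq 0$ off the surface, the level sets being the smooth ellipsoids, so $\partial\Omega$ is indeed a smooth hypersurface), and the orientation double cover of a non-orientable $N$ is a minimal \emph{immersion} rather than an embedded submanifold, so you should either observe that the proof of Lemma \ref{kconvrestr} applies verbatim to immersions or simply invoke the varifold statement, which already dispenses with orientability.
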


Lotay and Oliveira observed that all $S^1$-invariant minimal surfaces are holomorphic with respect to a compatible complex structure.
\begin{proposition}[Lotay and Oliveira {\cite[Lemma 4.3]{LO20}}] \label{S^1 invariant geodesics are calibrated}
Let $(X,g)$ be a space given by the Gibbons--Hawking ansatz associated to the harmonic function $\phi$ on $U\subset\mathbb{R}^3$, let $N$ be an $S^1$-invariant minimal surface in $(X,g)$ and let $\gamma:=\pi(N)$ be the associated curve in $U$ parametrized by arc-length. Then, $N$ is an holomorphic curve with respect to:
\[
\omega_{\dot{\gamma}}=\sum_{i=1}^3 \dot{\gamma}_i \left(dx_i\wedge \eta+\phi dx_j\wedge dx_k\right),
\]
where $(i,j,k)$ is a cyclic permutation of $(1,2,3)$. In particular, all $S^1$-invariant minimal surfaces are calibrated, i.e. there exists a closed form of the ambient manifold that restricts to the volume form of the surface.
\end{proposition}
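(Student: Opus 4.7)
The plan is to exhibit $\omega_{\dot{\gamma}}$ as one of the K\"{a}hler forms in the $S^2$-family of compatible hyperk\"{a}hler structures on $X$, and then verify by a direct computation in the orthonormal coframe of Lemma \ref{Structure} that its restriction to $N$ coincides with the induced area form. Since any K\"{a}hler form that calibrates a surface forces the surface to be holomorphic for the associated complex structure, this gives both conclusions at once.

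First I would invoke Lemma \ref{S1invariantsurfaces}: minimality of $N$ implies that $\gamma$ is a straight segment in $U\subset\mathbb{R}^3$, so the unit tangent $\dot{\gamma}=(\dot{\gamma}_1,\dot{\gamma}_2,\dot{\gamma}_3)$ is a constant vector. Because the $\omega_i$ are closed (as shown in the construction of the Gibbons--Hawking ansatz) and the $\dot{\gamma}_i$ are constants, $\omega_{\dot{\gamma}}=\sum_i\dot{\gamma}_i\omega_i$ is closed. Writing $\omega_i=e^i\wedge e^0+e^j\wedge e^k$ in the orthonormal coframe $e^0=\phi^{-1/2}\eta$, $e^a=\phi^{1/2}dx_a$, one checks the relations $\omega_{\dot{\gamma}}^2\neq 0$ and $\omega_{\dot{\gamma}}\wedge\omega_{\dot{\gamma}}=2\,|\dot{\gamma}|^2\,e^1\wedge e^2\wedge e^3\wedge e^0$, so $\omega_{\dot{\gamma}}$ is (up to sign) the K\"{a}hler form of the complex structure $J_{\dot{\gamma}}=\sum_i\dot{\gamma}_i J_i$ in the hyperk\"{a}hler $S^2$-family.

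Next I would describe the tangent plane of $N$. Since $N$ is $S^1$-invariant and projects onto $\gamma$, its tangent space at any point is spanned by the vertical generator $e_0$ (the unit vector dual to $e^0$) and by the unit horizontal lift $v:=\sum_i\dot{\gamma}_i e_i$ of $\dot{\gamma}$. The induced area form on $N$ is therefore $\bigl(\sum_i\dot{\gamma}_i e^i\bigr)\wedge e^0$. Evaluating $\omega_{\dot{\gamma}}=\sum_i\dot{\gamma}_i(e^i\wedge e^0+e^j\wedge e^k)$ on the pair $(v,e_0)$, the terms $e^j\wedge e^k$ vanish because $e_0$ annihilates every $e^a$ with $a\geq 1$ and the two entries $v,e_0$ then lie in the 1-dimensional span relevant for those forms; only the $e^i\wedge e^0$ pieces contribute, and they sum to $\sum_i\dot{\gamma}_i^2=1$. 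The same value is taken by the area form, so $\omega_{\dot{\gamma}}|_N$ equals the area form of $N$ pointwise.

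Finally, since $\omega_{\dot{\gamma}}$ is a closed unit-comass 2-form with $\omega_{\dot{\gamma}}|_N=d\Vol_N$, it is a calibration that calibrates $N$; equivalently, $N$ is a $J_{\dot{\gamma}}$-complex curve. The second claim of the proposition, that every $S^1$-invariant minimal surface is calibrated, is then immediate. The only step with any subtlety is the coframe computation showing that the ``off-diagonal'' terms $e^j\wedge e^k$ drop out when restricted to $N$; this is really just the statement that the $S^1$-fibre is vertical with respect to $\pi$, so it should pose no obstacle once the framing is set up carefully.
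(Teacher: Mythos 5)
Your proposal is correct and takes essentially the same route as the paper: the paper's proof consists precisely of evaluating $\omega_{\dot{\gamma}}$ on the orthonormal frame $\{\phi^{-1/2}\dot{\gamma},\phi^{1/2}\xi\}$ of $N$ --- exactly your pair $(v,e_0)$ --- to get $\sum_{i=1}^3\dot{\gamma}_i^2=1$. Your extra steps (constancy of $\dot{\gamma}$ via Lemma \ref{S1invariantsurfaces}, closedness of $\omega_{\dot{\gamma}}$, and its identification as a K\"{a}hler form in the hyperk\"{a}hler $S^2$-family) simply spell out what the paper leaves implicit.
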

\begin{proof}
Note that $\{\phi^{-1/2} \dot{\gamma},\phi^{1/2} \xi\}$ is a local  orthonormal frame of $N$. Plugging it in $\omega_{\dot{\gamma}}$, we get: 
\[
\omega_{\dot{\gamma}} (\phi^{-1/2}\dot{\gamma},\phi^{1/2}\xi)=\sum_{i=1}^3 \dot{\gamma}^2_i=\av{\dot\gamma}_{\mathbb{R}^3}=1.
\]
\end{proof}

In the multi-Eguchi--Hanson or multi-Taub--NUT space, under the compactness hypothesis, the converse holds.
\begin{proposition}\label{Compact Complex curves}
Let $(X,g)$ be a multi-Eguchi--Hanson or multi-Taub--NUT space and let $N$ be a compact holomorphic curve with respect to one of the compatible complex structures. Then, $\pi(N)$ is contained in the union of the lines connecting the singular points of $\phi$. 
\end{proposition}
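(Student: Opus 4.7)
The plan is to reduce the problem to analyzing the zero set of a cleverly chosen holomorphic function on $X$. The hyperk\"ahler structure gives an $S^2$-family of compatible complex structures $\{J_v\}_{v\in S^2}$ with K\"ahler forms $\omega_v = v_1\omega_1+v_2\omega_2+v_3\omega_3$, so $N$ is $J_v$-holomorphic for some $v\in S^2$. After an $\mathrm{SO}(3)$ rotation of $\mathbb{R}^3$ (which induces a hyperk\"ahler rotation, permuting the $\omega_i$ accordingly), I would assume $v=(1,0,0)$ and work with $J_1$.

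The first step is to produce a $J_1$-holomorphic function on $X$. A direct computation using the expressions for $\omega_2,\omega_3$ from the Gibbons--Hawking ansatz gives
\[
\omega_2+i\omega_3 = -(\eta - i\phi\, dx_1)\wedge (dx_2+i\, dx_3),
\]
which is the holomorphic symplectic $(2,0)$-form for $J_1$. In particular, $dx_2+i\, dx_3$ is of type $(1,0)$, so $f:=(x_2+ix_3)\circ\pi$ is a $J_1$-holomorphic function on $X$. Applying the maximum principle to each connected component $N'$ of $N$, which is a compact Riemann surface, I conclude that $f|_{N'}$ is constant. Hence $\pi(N')$ lies in a single line $L$ parallel to the $x_1$-axis, and being compact and connected, $\pi(N')$ is a closed segment $[a,b]\subset L$ (possibly degenerate).

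The key step, and the one I expect to be the main obstacle, is showing that the endpoints $a$ and $b$ must be singular points of $\phi$. I would argue by contradiction: if $a\in U$, choose $q\in N'\cap\pi^{-1}(a)$. Since $a$ is an extremal value of the $L$-coordinate along $\pi|_{N'}$, the derivative of $\pi|_{N'}$ valued in $L$ vanishes at $q$; combined with the containment $\pi_*(T_q N')\subset T_a L$, this forces $\pi_*(T_q N')=0$, so $T_q N'$ is contained in the $1$-dimensional vertical subspace $\mathrm{span}(\xi)$, contradicting $\dim_\mathbb{R} T_q N' = 2$. The degenerate case $a=b\in U$ is ruled out similarly, since then $N'\subset\pi^{-1}(a)$ has real dimension at most one. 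Hence $a,b\in\{p_i\}_{i=1}^k$.

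Putting everything together, each connected component of $N$ projects to a closed segment on $L$ whose endpoints are singular points of $\phi$, so $\pi(N)$ is contained in the union of all lines through pairs of singular points of $\phi$, as claimed. The holomorphic function construction is a clean byproduct of the Gibbons--Hawking formulas; the subtlety is the differential-topological argument at the endpoints, where one uses both that $\pi(N')\subset L$ forces a rank constraint and that the fiber of $\pi$ over any regular point of $\phi$ is only a circle.
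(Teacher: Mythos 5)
Your proof is correct, but it takes a genuinely different route from the paper's. The paper's argument is a three-line piece of complex geometry: it observes that $(X,J)$ is the crepant resolution of $\C^2/\Z_k$ (as a complex manifold this identification holds for the multi-Taub--NUT spaces as well, despite the different asymptotic geometry), pushes the compact curve forward along the resolution map, invokes the fact that $\C^2/\Z_k$ admits no non-trivial compact holomorphic curves, and concludes that $N$ lies in the exceptional locus, whose image under $\pi$ is exactly the union of segments joining singular points of $\phi$. You instead stay entirely inside the Gibbons--Hawking ansatz: after a hyperk\"ahler rotation you extract the holomorphic coordinate $f=(x_2+ix_3)\circ\pi$ from the factorization $\omega_2+i\omega_3=(dx_2+idx_3)\wedge(\eta-i\phi\,dx_1)$ (and indeed $(dx_2+idx_3)\wedge(\omega_2+i\omega_3)=0$ certifies that $dx_2+idx_3$ is of type $(1,0)$), kill $f$ on each component by the maximum principle, and exclude endpoints $a\in U$ by the rank argument $T_qN'\subset\ker d\pi_q=\mathrm{span}(\xi)$, which is $1$-dimensional over $U$. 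What each approach buys: yours is self-contained and elementary --- in particular it does not rely on the non-obvious identification of the multi-Taub--NUT complex structures with those of the ALE resolution, which the paper's one-line proof implicitly invokes --- and it yields slightly finer information, namely that each component projects \emph{onto} a segment whose endpoints are singular points of $\phi$ (with a little more work, that each component is a chain of $S^1$-invariant spheres). The paper's route is shorter given the known complex-geometric structure and handles singular analytic curves painlessly via properness, whereas your tangent-space argument at the extremal point tacitly uses that $N$ is an (immersed) submanifold --- consistent with the paper's conventions, and repairable for singular curves by passing to the normalization. Two minor points to patch: holomorphicity of $f$ extends across the added points $\pi^{-1}(p_i)$ by removability (needed if $N'$ meets them), and the degenerate case $a=b=p_i$ should also be noted and excluded, since it would force $N'$ to be a point; both are one-line fixes that do not affect the conclusion.
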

\begin{proof}
Let $\Sigma$ be a compact holomorphic curve of $X$. As $X$ is the crepant resolution of $\C^2 /\Z_k$, the projection of $\Sigma$ to $\C^2 /\Z_k$ is also compact and holomorphic. Since there are no non-trivial compact holomorphic curves in $\C^2 /\Z_k$, the projection of $\Sigma$ needs to be contained in the preimage of the singular set. 
\end{proof}

\begin{remark}
The compactness assumption is crucial. Indeed, by the bundle construction of calibrated submanifolds in  the Eguchi--Hanson space \cite[Theorem 3.1]{KM05}, we see that holomorphic curves, and in particular minimal surfaces, need not be $S^1$-invariant or with projection contained in a plane of $\mathbb{R}^3$ (see Figure \ref{KM counterexample}). \end{remark}

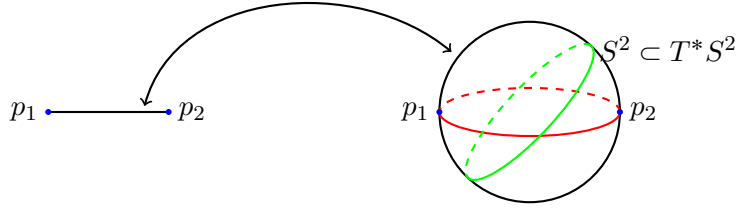
\begin{figure}[t]
\begin{tikzpicture}[scale=0.8]
\draw[thick] (-1,0,0)--(1,0,0);
\fill[blue] (-1,0,0) circle (0.05cm) node[black,left] {$p_1$};
\fill[blue] (1,0,0) circle (0.05cm) node[black,right] {$p_2$};

\draw[thick] (7,0,0) circle (1.5 cm);
\node at (9.3,1.05,0) {$S^2\subset T^\ast S^2$};
\draw[red,thick] (5.5,0,0) arc (180:360:1.5 cm and 0.4cm);
\draw[red,thick, dashed] (8.5,0,0) arc (0:180:1.5 cm and 0.4cm);
\fill[blue] (5.5,0,0) circle (0.05cm) node[black,left] {$p_1$};
\fill[blue] (8.5,0,0) circle (0.05cm) node[black,right] {$p_2$};
\draw[green,thick, rotate around={47:(7,0,0)}] (5.5,0,0) arc (180:360:1.5 cm and 0.4cm);
\draw[green,thick, dashed,rotate around={47:(7,0,0)}] (8.5,0,0) arc (0:180:1.5 cm and 0.4cm);
\draw[thick, <->] (0.6,0.1) to [out=75,in=140] (5.7,1);
\end{tikzpicture}
\caption{Base submanifolds for the bundle construction of holomorphic curves that are not circle invariant (in red) or with projection contained in a plane of $\mathbb{R}^3$ (in green).} \label{KM counterexample}
\end{figure}

\begin{corollary}
	Let $(X,g)$ be a multi-Eguchi--Hanson or multi-Taub--NUT space. Then, every $S^1$-invariant surface connecting two singular points of $\phi$ is the unique volume-minimizer in the homology class.
\end{corollary}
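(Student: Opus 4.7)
The natural approach is to exploit the calibration provided by Proposition \ref{S^1 invariant geodesics are calibrated}. Let $N$ be the $S^1$-invariant minimal surface in question, set $\gamma:=\pi(N)$ parametrized by arc-length with unit tangent $\dot{\gamma}$, and let $\omega_{\dot{\gamma}}$ be the globally defined closed $2$-form from that proposition, which calibrates $N$. For any compact minimal surface (or integer rectifiable current) $N'$ homologous to $N$, Stokes' theorem gives
\[
\Vol(N')\;\geq\;\int_{N'}\omega_{\dot{\gamma}}\;=\;\int_N \omega_{\dot{\gamma}}\;=\;\Vol(N),
\]
which is the volume-minimization property.

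For uniqueness, suppose $N'$ achieves equality above. Then Wirtinger's inequality forces $\omega_{\dot{\gamma}}$ to calibrate $N'$ at every (approximate) tangent plane, so $N'$ is a compact holomorphic curve with respect to the complex structure $J_{\dot{\gamma}}$ underlying the K\"{a}hler form $\omega_{\dot{\gamma}}$. Proposition \ref{Compact Complex curves} then forces $\pi(N')$ to lie in the union of the line segments between singular points of $\phi$. Since $\pi(N')$ is at most $1$-dimensional while $N'$ is $2$-dimensional, and $\pi$ is the $S^1$-bundle projection, the support of $N'$ is $S^1$-invariant.

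To pin down the allowed segments, one computes $\omega_{\dot{\gamma}}$ on the orthonormal tangent frame $\{\phi^{-1/2}\widehat{\dot{\gamma'}},\phi^{1/2}\xi\}$ of the $S^1$-invariant surface over a segment with unit direction $\dot{\gamma'}$ (where $\widehat{\dot{\gamma'}}$ denotes the horizontal lift). Using the explicit formula for $\omega_{\dot{\gamma}}$ and the defining properties of $\eta$, a short calculation gives
\[
\omega_{\dot{\gamma}}\bigl(\phi^{-1/2}\widehat{\dot{\gamma'}},\,\phi^{1/2}\xi\bigr)=\la\dot{\gamma},\dot{\gamma'}\ra_{\mathbb{R}^3}.
\]
The calibration condition thus forces $\dot{\gamma'}=\dot{\gamma}$, so every component of $\pi(N')$ lies on a line parallel to $\gamma$ joining two singular points of $\phi$. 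Combined with the homology constraint $[N']=[N]$, this pins down $N'=N$ as a current.

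The main obstacle is the very last implication: one must argue that, among $S^1$-invariant integer sums of surfaces over segments parallel to $\gamma$ joining singular points of $\phi$, the class $[N]\in H_2(X;\mathbb{Z})$ admits a unique representative. This follows from the standard description of $H_2$ in the multi-Eguchi--Hanson/multi-Taub--NUT case: the $S^1$-invariant spheres over consecutive segments along a fixed line form a basis of the sublattice they generate, so any two decompositions coincide (any subdivision through intermediate singular points of $\phi$ lying on $\gamma$ yields the same underlying current as $N$).
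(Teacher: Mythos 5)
Your proposal is correct and follows essentially the same route as the paper's own (much terser) proof: calibrate $N$ by $\omega_{\dot{\gamma}}$ via Proposition \ref{S^1 invariant geodesics are calibrated}, note that any other minimizer is holomorphic for the same complex structure, and conclude with Proposition \ref{Compact Complex curves}. Your additional steps---the computation $\omega_{\dot{\gamma}}(\phi^{-1/2}\widehat{\dot{\gamma'}},\phi^{1/2}\xi)=\la\dot{\gamma},\dot{\gamma'}\ra_{\mathbb{R}^3}$ forcing parallel segments, and the lattice argument in $H_2(X;\Z)$---are correct elaborations of details the paper leaves implicit in ``we can conclude,'' not a different method.
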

\begin{proof}
	Proposition \ref{S^1 invariant geodesics are calibrated} implies that such a surface is calibrated, and hence volume-minimizing in the homology class. Moreover, every other minimizer needs to be a compact complex submanifold with respect to the same complex structure. We can conclude via Proposition \ref{Compact Complex curves}.
\end{proof}

We now show that, under suitable topological conditions, the only compact embedded orientable strongly stable minimal surfaces of the multi-Eguchi--Hanson and multi-Taub--NUT spaces are circle-invariant. 
\begin{proposition}
Let $(X,g)$ be a multi-Eguchi--Hanson or multi-Taub--NUT space and let $N$ be an embedded stable minimal sphere in the same $H_2(X,\Z)$ homology class of an $l$-chain of $S^1$-invariant minimal spheres, $l\geq0$. Then, $N$ is a complex submanifold with respect to one of the complex structures on $M$ compatible with the metric.
\end{proposition}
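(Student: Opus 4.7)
The plan is to identify, from the homology class of $N$, a distinguished compatible complex structure $J_{v^\ast}$ that ought to make $N$ holomorphic, and then to close the loop via a Micallef--Wolfson-type classification of stable minimal $2$-spheres in Ricci-flat K\"ahler $4$-manifolds.

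First I would compute the calibrated mass $\int_N\omega_v$ for every $v\in S^2$, where $\omega_v:=v_1\omega_1+v_2\omega_2+v_3\omega_3$ is the closed K\"ahler form of $J_v$. Writing $[N]=\sum_{i=1}^l n_i[C_i]$ in $H_2(X;\mathbb{Z})$, with $C_i$ the circle-invariant minimal spheres composing the chain and $\hat v_i\in S^2$ the unit direction of the segment $\gamma_i=\pi(C_i)$, the argument in the proof of Proposition \ref{S^1 invariant geodesics are calibrated} applied to the local orthonormal frame $\{\phi^{-1/2}\dot\gamma_i,\phi^{1/2}\xi\}$ of $C_i$ gives $\omega_v|_{C_i}=\langle v,\hat v_i\rangle\,dA_{C_i}$. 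Closedness of $\omega_v$ then yields
\[
\int_N\omega_v=\langle v,\vec V\rangle,\qquad \vec V:=\sum_i n_i\,\Vol_X(C_i)\,\hat v_i\in\mathbb{R}^3.
\]
Each $\omega_v$ is a K\"ahler calibration, so Wirtinger gives $\int_N\omega_v\leq\Vol_X(N)$ with equality iff $N$ is $J_v$-complex. Maximising the linear function $v\mapsto\langle v,\vec V\rangle$ on $S^2$ (assuming $\vec V\neq 0$ for the moment) singles out $v^\ast:=\vec V/|\vec V|$, with $|\vec V|=\int_N\omega_{v^\ast}\leq\Vol_X(N)$.

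The heart of the proof is reversing this inequality via stability. Here I would invoke the Micallef--Wolfson classification: a stable minimal $2$-sphere in a $4$-manifold of zero scalar curvature is holomorphic with respect to some orthogonal almost complex structure compatible with the orientation. Since $(X,g)$ is hyperk\"ahler (hence scalar-flat) and the compatible orthogonal almost complex structures are parametrised by the $J_v$ for $v\in S^2$, the theorem produces $w\in S^2$ with $\Vol_X(N)=\int_N\omega_w=\langle w,\vec V\rangle\leq|\vec V|$. Combined with the previous step, all inequalities collapse to equalities, forcing $w=v^\ast$ and hence $N$ to be $J_{v^\ast}$-complex. The degenerate case $\vec V=0$ is handled by the same classification: it would give $\Vol_X(N)=\langle w,0\rangle=0$, which is incompatible with $N$ being a non-trivial embedded surface.

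The main obstacle is Step 3, the invocation of Micallef--Wolfson. Its proof exploits the relation $\cos^2\theta_1+\cos^2\theta_2+\cos^2\theta_3\equiv 1$ between the three K\"ahler angles of a minimal surface in a hyperk\"ahler $4$-manifold, together with a Simons-type equation for each $\cos\theta_v$ that simplifies drastically when $\mathrm{Ric}\equiv 0$; stability combined with a maximum-principle/Bochner argument on $S^2$ then forces one $\cos\theta_v$ to be identically $\pm 1$. The homology hypothesis plays the crucial selection role, pinning down \emph{which} $v^\ast\in S^2$ gives the correct parallel complex structure; without that homological control on $\int_N\omega_v$ one could not identify the (a priori only pointwise) orthogonal almost complex structure produced by Micallef--Wolfson with the specific parallel $J_{v^\ast}$ dictated by $[N]$.
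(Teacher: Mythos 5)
Your calibration computation $\int_N\omega_v=\la v,\vec V\ra$ is correct (it mirrors Proposition \ref{S^1 invariant geodesics are calibrated}), but there is a genuine gap at the step where you invoke ``Micallef--Wolfson: a stable minimal $2$-sphere in a scalar-flat $4$-manifold is holomorphic with respect to some compatible orthogonal almost complex structure.'' No such hypothesis-free theorem is available in \cite{MW93}. The result that applies here --- \cite[Corollary 5.4]{MW93}, which is exactly what the paper cites --- requires a topological hypothesis, a lower bound $\chi(\nu)\geq-2$ on the Euler number of the normal bundle of the sphere, and this is precisely where the two assumptions your argument never uses, embeddedness and the specific $l$-chain form of the homology class, enter. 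For an embedded sphere $\chi(\nu)=Int([N],[N])$, and the class of an $l$-chain of circle-invariant $(-2)$-spheres has self-intersection $-2l+2(l-1)=-2$ for $l\geq1$ (and $0$ for $l=0$), so $\chi(\nu)\geq-2$ and Micallef--Wolfson concludes directly, producing one of the parallel complex structures. Your reading of the homology hypothesis as merely ``selecting'' $v^\ast$ among the $J_v$ misses its indispensable role; a proof of this proposition that never uses embeddedness cannot be right, since without embeddedness $\chi(\nu)$ is not controlled by $[N]$ at all.

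The weaker statement you rely on is, moreover, vacuous in this setting, which is why your chain of inequalities cannot be closed. In a hyperk\"ahler $4$-manifold the K\"ahler angles of \emph{any} oriented surface satisfy $\cos^2\theta_1+\cos^2\theta_2+\cos^2\theta_3\equiv1$ pointwise (the self-dual part of a unit simple $2$-form has constant norm), so every surface is ``holomorphic'' for the varying pointwise structure $J_{v(x)}$ with $v(x)=(\cos\theta_1,\cos\theta_2,\cos\theta_3)$; the entire content of the proposition is that $v(x)$ is constant. Without constancy you only get $\Vol_X(N)=\int_N\omega_{v(x)}$ with varying $v(x)$, not $\Vol_X(N)=\int_N\omega_w=\la w,\vec V\ra$ for a fixed $w$, and a priori $\sup_{v\in S^2}\la v,\vec V\ra=\av{\vec V}$ can be strictly smaller than $\Vol_X(N)$, so Wirtinger equality is not forced. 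Note also that if your maximum-principle sketch worked with no topological input, it would prove the proposition with the homology hypothesis deleted --- a further sign that the classification you quote is stronger than what exists. The correct route is the short one: embeddedness plus the $l$-chain hypothesis give $\chi(\nu)=Int([N],[N])\geq-2$, and \cite[Corollary 5.4]{MW93} finishes the proof.
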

\begin{proof}
Let $\nu$ be the normal bundle of $N$. By the embeddedness of $N$ and by the topological condition we have: 
\[
\chi(\nu)=Int([N], [N])\geq-2,
\]
where $\chi(\nu)$ is the Euler number of $\nu$ and $Int$ is the intersection form of $X$.
We conclude by 	\cite[Corollary 5.4]{MW93}.
\end{proof}

\begin{corollary}
Let $(X,g)$ be a multi-Eguchi--Hanson or multi-Taub--NUT space and let $N$ be an orientable embedded compact strongly stable minimal surface in the same $H_2(X,\Z)$ homology class of an $l$-chain of $S^1$-invariant minimal spheres, $l\geq0$. Then, $N$ is a complex submanifold with respect to one of the complex structures on $M$ compatible with the metric.
\end{corollary}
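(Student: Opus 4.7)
The plan is to reduce this corollary directly to the proposition that immediately precedes it, by upgrading the hypothesis ``orientable embedded compact strongly stable minimal surface'' to ``embedded stable minimal sphere.''

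First I would invoke Corollary \ref{Strongly Stable surfaces are Spheres}: since $(X,g)$ is a 4-dimensional hyperk\"{a}hler manifold and $N$ is an orientable compact strongly stable minimal surface in it, $N$ must be topologically a sphere. This takes care of the topological type.

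Next I would recall the remark that strong stability implies stability. Combining this with the previous step, $N$ is an embedded stable minimal sphere lying in the given $H_2(X,\Z)$ homology class of an $l$-chain of $S^1$-invariant minimal spheres. The hypotheses of the preceding proposition are therefore met verbatim, so $N$ is a complex submanifold with respect to one of the compatible complex structures.

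There is no genuine obstacle here beyond checking that the two logical steps chain cleanly: strong stability is used only to force the topology (via Gauss--Bonnet through Proposition \ref{Strong Stability in HK} and Corollary \ref{Strongly Stable surfaces are Spheres}) and to guarantee stability, after which the preceding proposition does all the work. In particular, no new computation with the normal bundle or the intersection form is needed, since the Euler number estimate $\chi(\nu)=\mathrm{Int}([N],[N])\geq-2$ has already been established in the proof of the previous proposition and passed to \cite[Corollary 5.4]{MW93}.
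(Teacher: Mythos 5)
Your proposal is correct and is exactly the paper's argument: the paper's proof invokes the preceding proposition together with Corollary \ref{Strongly Stable surfaces are Spheres}, with the implicit use of the remark that strong stability implies stability, which you have simply made explicit. No difference in approach.
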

\begin{proof}
It follows from previous proposition and Corollary \ref{Strongly Stable surfaces are Spheres}.
\end{proof}

Finally, we consider geodesics.

\begin{lemma}[Lotay and Oliveira {\cite[Lemma 3.1]{LO20}}] \label{S1invariantgeodesics}
Let $(X,g)$ be a multi-Eguchi--Hanson or multi-Taub--NUT space, let $\gamma$ be an $S^1$-invariant curve in $(X,g)$ and let $p:=\pi(\gamma)$ be the associated point in $U$. Then, $$\Length_X (\gamma)=\frac{2\pi}{\sqrt{\phi(p)}}.$$ Moreover, $\gamma$ is a closed geodesic if and only if $p$ is a critical point of $\phi$
\end{lemma}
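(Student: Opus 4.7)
The plan is to verify both claims by a direct computation in the orthonormal coframe of Lemma \ref{Structure}. First I would identify the infinitesimal generator $\xi$ inside that frame: the condition $e^0(\xi)=\phi^{-1/2}\eta(\xi)=\phi^{-1/2}$ together with $e^i(\xi)=\phi^{1/2}dx_i(\xi)=0$ (since $\xi$ is vertical) gives $\xi=\phi^{-1/2}e_0$. I would also record at the outset that $\phi$ is basic, so $e_0(\phi)=\phi^{1/2}\,\xi(\phi)=\phi^{1/2}\,d\phi(\pi_*\xi)=0$. Both facts will be used repeatedly.

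For the length formula, the only $S^1$-invariant curve projecting to $p$ is the orbit $\pi^{-1}(p)$; parametrizing it by the $S^1$ parameter $\theta\in[0,2\pi)$ so that $\partial_\theta=\xi$, the metric $g=\phi^{-1}\eta^2+\phi\, g_{\mathbb{R}^3}$ gives $\lvert\xi\rvert_g^2=\phi^{-1}(p)$, which is constant along the orbit. Integration then yields $\Length_X(\gamma)=2\pi/\sqrt{\phi(p)}$.

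For the geodesic statement, since $\gamma$ has constant speed it is a geodesic if and only if $\nabla_\xi\xi\equiv 0$. Expanding with $\xi=\phi^{-1/2}e_0$, using $e_0(\phi)=0$, and inserting the first identity of Lemma \ref{Structure} gives
\[
\nabla_\xi\xi=\phi^{-1/2}\nabla_{e_0}\bigl(\phi^{-1/2}e_0\bigr)=\phi^{-1}\nabla_{e_0}e_0=\frac{1}{2\phi^{5/2}}\sum_{i=1}^3\frac{\partial\phi}{\partial x_i}e_i.
\]
Since $\{e_1,e_2,e_3\}$ are linearly independent, this vanishes if and only if $d\phi(p)=0$, i.e.\ $p$ is a critical point of $\phi$. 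Closedness is automatic because the orbit itself is compact.

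There is no substantial obstacle here; everything is bookkeeping with the structure equations. The only point deserving a moment of care is the identification $\xi=\phi^{-1/2}e_0$ and the consequent vanishing of $e_0(\phi)$, both of which follow directly from the principal $S^1$-bundle structure and the fact that $\phi$ descends to $U$.
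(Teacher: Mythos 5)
Your proof is correct, but it takes a genuinely different route from the paper. The paper's proof is the one-line ``it follows as in Lemma~\ref{S1invarianthypersurfaces}'': the length of the orbit over $p$ is computed from the metric to be $2\pi\phi^{-1/2}(p)$, and then the principle of symmetric criticality of Hsiang--Lawson \cite[Theorem 1]{HsLa71} is invoked, so that the invariant curve is a geodesic if and only if it is stationary under $S^1$-equivariant variations, i.e.\ if and only if $p$ is a critical point of the reduced length functional $2\pi\phi^{-1/2}$ on $U$ --- equivalently a critical point of $\phi$. You instead verify the geodesic equation directly: the identification $\xi=\phi^{-1/2}e_0$, the fact that $\phi$ is basic (so $e_0(\phi)=0$), and the first structure equation of Lemma~\ref{Structure} give
\[
\nabla_\xi\xi=\phi^{-1}\nabla_{e_0}e_0=\frac{1}{2\phi^{5/2}}\sum_{i=1}^3\frac{\partial\phi}{\partial x_i}\,e_i,
\]
which vanishes precisely when $d\phi(p)=0$; all of these steps check out against the conventions of the paper. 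What your approach buys is self-containedness: you never need the equivariant-variations principle, and in particular you get full stationarity (the geodesic equation pointwise) rather than deducing it from stationarity within the invariant class. What the paper's approach buys is uniformity: the same two-line argument disposes of hypersurfaces (Lemma~\ref{S1invarianthypersurfaces}), surfaces (Lemma~\ref{S1invariantsurfaces}) and geodesics at once, with no connection computations beyond the volume/length formula. One could also note that your computation of $\nabla_{e_0}e_0$ is implicitly reused later in the paper (e.g.\ in Lemma~\ref{mean curvature S^1 invariant}), so the two routes are close in spirit even though the logical mechanism differs.
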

\begin{proof}
It follows as in Lemma \ref{S1invarianthypersurfaces}.
\end{proof}

\begin{remark}
Observe that Lemma \ref{S1invarianthypersurfaces}, Lemma \ref{S1invariantsurfaces} and Lemma \ref{S1invariantgeodesics} agree with \cite[Theorem 2]{HsLa71}.
\end{remark}

Lotay and Oliveira proved an existence result for circle-invariant geodesics.
\begin{proposition}[Lotay and Oliveira {\cite{LO20}}] \label{Circle-invariant geodesics}
Let $(X,g)$ be a multi-Eguchi--Hanson or multi-Taub--NUT space with $k\geq2$ singular points of $\phi$. Then, there are at least $k-1$ $S^1$-invariant closed geodesics. Moreover, each such geodesic $\gamma$ is unstable and $\pi(\gamma)$ is contained in the convex hull of the singular points of $\phi$. 
\end{proposition}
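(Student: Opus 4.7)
The plan is to convert the problem into a variational problem for the potential $\phi$ on $U \subset \mathbb{R}^3$, use Morse-theoretic / min-max arguments to produce the critical points, and then handle the instability and convex-hull statements via the explicit formula for $\phi$ together with harmonicity.

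By Lemma \ref{S1invariantgeodesics}, the $S^1$-invariant curve $\gamma$ over $p \in U$ is a closed geodesic if and only if $p$ is a critical point of $\phi$, with length $2\pi/\sqrt{\phi(p)}$. So it suffices to (a) exhibit $k-1$ critical points of $\phi$ in $U$, (b) prove each one gives an unstable geodesic, and (c) locate them inside $\operatorname{conv}\{p_i\}$. For (a), I would consider $f := 1/\phi$, extended continuously to the whole of $\mathbb{R}^3$ by setting $f(p_i) = 0$. Then $f \geq 0$ with equality exactly at the $p_i$'s, and $f$ is smooth on $\mathbb{R}^3$ away from the $p_i$'s; at infinity $f \to +\infty$ in the multi-Eguchi--Hanson case and $f \to 1/m$ in the multi-Taub--NUT case. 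In both cases, the sublevel sets $\{f \leq c\}$ are compact for all admissible $c$, and they consist of $k$ disjoint neighbourhoods of the $p_i$'s for $c$ small, but must eventually become connected (or, in the Taub--NUT case, the basins of the $p_i$'s under $-\nabla f$ must exhaust $\mathbb{R}^3$). A Lusternik--Schnirelmann / mountain-pass argument then forces at least $k-1$ critical points of $f$ with values in $(0,\sup f)$, i.e.\ saddles separating the basins of the $p_i$'s. Each such critical point lies in $U$ and is a critical point of $\phi$.

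For (b), at any critical point $p \in U$ of $\phi$, harmonicity gives $\operatorname{tr}\operatorname{Hess}\phi(p) = \Delta\phi(p) = 0$, so either $\operatorname{Hess}\phi(p)$ has both strictly positive and strictly negative eigenvalues, or it vanishes identically; in the latter degenerate case, the strong maximum principle still precludes $p$ from being a local maximum of the non-constant harmonic $\phi$, and one argues with the first non-trivial jet. In the non-degenerate case, pick a unit $v \in T_p U$ with $\operatorname{Hess}\phi(v,v)>0$, and take the variation $\gamma_s$ of $\gamma$ given by the $S^1$-orbits over $\exp_p(sv)$. Using Lemma \ref{S1invariantgeodesics},
\begin{equation*}
L(\gamma_s) \;=\; \frac{2\pi}{\sqrt{\phi(\exp_p(sv))}} \;=\; \frac{2\pi}{\sqrt{\phi(p)}} \;-\; \frac{\pi\,\operatorname{Hess}\phi(v,v)}{2\,\phi(p)^{3/2}}\,s^{2} \;+\; O(s^{3}),
\end{equation*}
so the second variation of length is strictly negative in this direction, showing $\gamma$ has positive Morse index and is unstable.

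For (c), I would use the explicit formula $\nabla\phi(x) = -\tfrac{1}{2}\sum_{i=1}^{k}(x-p_i)/|x-p_i|^{3}$ (the constant $m$ in the Taub--NUT case does not contribute). If $x \notin \operatorname{conv}\{p_i\}_{i=1}^k$, a supporting-hyperplane argument produces a unit vector $n$ with $\langle x - p_i, n\rangle > 0$ for every $i$, and then
\begin{equation*}
\langle \nabla\phi(x), n\rangle \;=\; -\tfrac{1}{2}\sum_{i=1}^k \frac{\langle x-p_i,n\rangle}{|x-p_i|^{3}} \;<\; 0,
\end{equation*}
so $\nabla\phi(x) \neq 0$. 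Hence every critical point of $\phi$ lies in $\operatorname{conv}\{p_i\}$, which gives (c).

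The main obstacle is step (a): one must show that the min-max / counting really produces $k-1$ \emph{distinct} critical points rather than repeatedly finding the same saddle, and one must handle non-Morse $\phi$ cleanly. A clean way around this is probably to perturb $\phi$ to a Morse function, apply the Morse inequalities on $\{f \leq c\}$ (or its one-point compactification in the Eguchi--Hanson case, where we effectively work on $S^3$), and then pass to the limit; alternatively, a direct Lusternik--Schnirelmann category argument on classes of paths linking the basins yields the bound without any genericity assumption. The instability statement is essentially free from harmonicity, and the convex-hull statement is a direct computation.
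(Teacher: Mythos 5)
The paper itself offers no proof of this proposition: it is imported verbatim from \cite{LO20}, so the comparison is with the cited source, whose strategy your proposal essentially reproduces — reduce to critical points of $\phi$ via Lemma \ref{S1invariantgeodesics}, run min--max on $f=1/\phi$ for existence, use harmonicity (trace-free Hessian) for instability, and a supporting-hyperplane computation of $\langle \nabla\phi, n\rangle$ for the convex hull. Parts (b) and (c) are correct and complete as written: the expansion $L(\gamma_s)=2\pi\phi(p)^{-1/2}-\pi\,\mathrm{Hess}\,\phi(v,v)\,\phi(p)^{-3/2}s^2/2+O(s^3)$ has the right coefficient; a nonzero trace-free Hessian necessarily has a strictly positive eigenvalue; in the totally degenerate case the first nonvanishing homogeneous jet of $\phi-\phi(p)$ is a harmonic polynomial, hence sign-changing, so $\gamma$ is not locally length-minimizing; and the separation argument in (c) is airtight (and also covers the Taub--NUT case since $m$ drops out of $\nabla\phi$). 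Your compactness remarks in (a) are also fine: in the multi-Taub--NUT case $\phi>m$ gives $f<1/m$ everywhere, so every min--max value lies strictly below the level at infinity and sublevel sets are compact.

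The genuine gap is exactly the one you flag in (a): producing $k-1$ \emph{distinct} critical points. The component-merging count gives $k-1$ merging \emph{events} as the sublevel sets of $f$ coalesce from $k$ components to one, but a single degenerate critical point of a harmonic function can merge three or more basins at once (a monkey-saddle-type point; Poincar\'e--Hopf is consistent with, say, three minima plus a single gradient zero of index $-2$), so $k-1$ events do not immediately yield $k-1$ points. Neither of your proposed repairs closes this: perturbing $\phi$ to a Morse function yields $k-1$ distinct saddles for each perturbation, but these can coalesce as the perturbation is switched off, so passing to the limit only recovers existence, not multiplicity; and Lusternik--Schnirelmann category sees only the trivial topology of $\mathbb{R}^3$, while the $k$ minima of $f$ sit at its non-smooth cone points, so no off-the-shelf category bound applies. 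As written, your (a) proves the existence of at least one $S^1$-invariant closed geodesic unconditionally, and of $k-1$ of them only under a Morse/genericity hypothesis on $\phi$; ruling out simultaneous multi-basin merges for these specific potentials (or otherwise extracting $k-1$ distinct critical points) is precisely the step that must be supplied or taken from \cite{LO20}.
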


The tri-Hamiltonian circle-action, together with Lemma \ref{S1invariantgeodesics}, implies the following result.
\begin{lemma}\label{Lemma ss geodesic}
Let $(X,g)$ be a space given by the Gibbons--Hawking ansatz associated to the harmonic function $\phi$ on $U\subset\mathbb{R}^3$. Then, there are no closed strongly stable geodesics in $X$. 
\end{lemma}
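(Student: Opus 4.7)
The approach I would take leverages the fact that $X$ is hyperk\"ahler and therefore Ricci-flat. For a closed geodesic $\gamma$ with unit tangent $T$, the second fundamental form $A(T,T) = \nabla_T^\perp T$ vanishes, so $\mA \equiv 0$, and strong stability reduces to pointwise strict positivity of $-\mR$ on the three-dimensional normal bundle $N\gamma$. The first step is to compute the trace of $-\mR$ on $N_p\gamma$: picking any oriented orthonormal basis $\{e_1, e_2, e_3\}$ of $N_p\gamma$, one gets
\begin{align*}
\sum_{i=1}^{3} \la -R(e_i, T) T, e_i\ra = -\Ric(T, T) = 0,
\end{align*}
using Ricci-flatness. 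Since a strictly positive definite self-adjoint operator on a nonzero finite-dimensional space has strictly positive trace, $-\mR$ cannot be positive definite at any point of $\gamma$, and strong stability fails immediately.

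An alternative more in the spirit of the hint would use the Killing field $\xi$ generating the tri-Hamiltonian $S^1$ action. The plan would be: the restriction $\xi|_\gamma$ is a Jacobi field along $\gamma$; decomposing it as $\xi|_\gamma = cT + V$ with $c := \la \xi, T \ra$ constant (by the Killing equation combined with $\nabla_T T = 0$) and $V := \xi^\perp$, one checks that $V$ is itself a normal Jacobi field, using that $R(T,T)T = 0$ so that the Jacobi equation splits along the normal/tangential decomposition. If $V \not\equiv 0$, then $V$ being Jacobi gives $Q(V,V) = 0$, whereas pointwise positivity of $-\mR$ combined with $\int |\nabla^\perp V|^2 \geq 0$ forces $Q(V,V) > 0$, a contradiction. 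The case $V \equiv 0$ would force $\xi|_\gamma = cT$, and since the zero locus of $\xi$ is the discrete set of NUTs in the Gibbons--Hawking picture, $\gamma$ would be either a trivial loop at a NUT (impossible) or an entire $S^1$-orbit, in which case Lemma \ref{S1invariantgeodesics} and Proposition \ref{Circle-invariant geodesics} finish the argument since such geodesics are shown there to be unstable and hence not strongly stable.

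The main obstacle in either approach is essentially none. The Ricci-flat route is one line; the $\xi$-route needs only the dichotomy splitting $S^1$-invariant from non-$S^1$-invariant closed geodesics, which is clean once one observes that the tangential component of a Killing field along a geodesic is automatically constant. I would go with the Ricci-flat argument for brevity and because it does not depend on any prior result beyond Ricci-flatness of hyperk\"ahler $4$-manifolds; the $\xi$-argument is perhaps more natural given the setup of the paper and is what the preamble to the lemma hints at.
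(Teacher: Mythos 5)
Your primary (Ricci-flat trace) argument is correct, and it takes a genuinely different route from the paper's. The paper proves this lemma using the symmetry of the ansatz: by Lemma \ref{S1invariantgeodesics}, circle-invariant closed geodesics sit over critical points of $\phi$ with length $2\pi\phi^{-1/2}$, and since $\phi$ is harmonic the strong maximum principle rules out the extrema of $\phi$ needed for stability, so invariant geodesics are unstable; for non-invariant closed geodesics, the tri-Hamiltonian circle action preserves length and so produces a nontrivial periodic normal Jacobi field, i.e., positive nullity, contradicting the strict positivity of the index form that pointwise positivity of $-\mR-\mA$ forces. Your second sketch is essentially this argument, with one caveat: you invoke Proposition \ref{Circle-invariant geodesics} for the invariant case, but that proposition is stated only for multi-Eguchi--Hanson and multi-Taub--NUT spaces, while the lemma concerns arbitrary Gibbons--Hawking spaces; the paper instead disposes of the invariant case by the maximum-principle argument just described. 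Your trace argument avoids all of this: along a geodesic $\nabla_T T=0$ gives $\mA\equiv 0$, and the trace of $\mR$ on the three-dimensional normal space is (up to the sign convention for $R$, which does not matter here) $\pm\Ric(T,T)=0$ by Ricci-flatness, so $-\mR$ can never be positive definite. This is pointwise, needs no closedness, no completeness, and no circle action, holding in any Ricci-flat manifold; it also exactly parallels the paper's own lemma excluding strongly stable minimal hypersurfaces via $(-\mR-\mA)(\nu)=-\Ric(\nu,\nu)-\av{A}^2$, so it is arguably the more natural companion to that result. What the paper's longer route buys instead is the geometric mechanism, namely the Killing nullity and the role of the harmonic potential $\phi$, which is the structure the rest of the paper exploits.
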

\begin{proof}
	As $\phi$ is harmonic, the strong maximum principle implies that there are no local minima of $\phi$. Hence, Lemma \ref{S1invariantgeodesics} implies that strongly stable geodesics can not be circle-invariant. However, as the tri-Hamiltonian action preserves the length of curves, we deduce that the nullity of a geodesic is positive if the curve is not circle-invariant. We conclude that strongly stable geodesic do not exist.
\end{proof}

\begin{remark} 
Lemma \ref{Lemma ss geodesic} is not true for stable geodesics. Indeed, Oliveira \cite{Oli} found infinitely many examples of spaces given by the Gibbons--Hawking ansatz with a closed stable geodesic. 
\end{remark}

\begin{remark} \label{Remarkonlystrongstabilityaresurfaces}
It follows that the only compact strongly stable submanifolds need to be surfaces. Moreover, similarly as above, we know that a compact strongly stable submanifolds need to be a circle-invariant line connecting two singular points of $\phi$. 
\end{remark}

\section{Barriers For Minimal Submanifolds in the Gibbons--Hawking ansatz}

\subsection{Geometry of circle-invariant hypersurfaces}
Let $(X,g)$ be a space given by the Gibbons--Hawking ansatz associated to the harmonic function $\phi$ on $U\subset\mathbb{R}^3$.

We now relate the second fundamental form of a circle-invariant hypersurface in $X$ to the second fundamental form of its projection in $U$. 
\begin{lemma} \label{SFF S^1 invariant}
Let $N$ be an $S^1$-invariant hypersurface in $(X,g)$ and let $\Sigma:=\pi(N)$ be the associated surface in $U$. Let $(u,v)$ be a local orthonormal frame for $\Sigma\subset U$ with respect to the Euclidean metric and let $\nu$ be an Euclidean unit normal to $\Sigma$. Then, the second fundamental form of $N$ with respect to the unit normal $\tilde{\nu}:=\phi^{-1/2} \nu$, which we denote by $\Gemini^X_{\tilde{\nu}}$, has the form: 
\begin{align*}
    &\Gemini^X_{\tilde{\nu}}(e_0,e_0)=(2\phi)^{-1} \la \nabla_{\mathbb{R}^3} \phi,\tilde{\nu}\ra_{\mathbb{R}^3};
     \hspace{34pt}\\
    &\Gemini^X_{\tilde{\nu}} (\tilde{u},\tilde{u})=\phi^{-1/2} \Gemini^{\mathbb{R}^3}_{\nu} (u,u)-(2\phi)^{-1} \la \nabla_{\R^3} \phi,\tilde{\nu}\ra_{\mathbb{R}^3};\\
    &\Gemini^X_{\tilde{\nu}} (\tilde{u},\tilde{v})=\phi^{-1/2} \Gemini^{\mathbb{R}^3}_{\nu} (u,v);
    \hspace{56pt}\\
    &\Gemini^X_{\tilde{\nu}} (\tilde{v},\tilde{v})=\phi^{-1/2} \Gemini^{\mathbb{R}^3}_{\nu} (v,v)-(2\phi)^{-1} \la \nabla_{\mathbb{R}^3} \phi,\tilde{\nu}\ra_{\mathbb{R}^3};\\
    &\Gemini^X_{\tilde{\nu}} (e_0,\tilde{u})=-(2\phi)^{-1} \la u \times \nabla_{\mathbb{R}^3} \phi, \tilde{\nu}\ra_{\mathbb{R}^3};\hspace{5pt}\\
    &\Gemini^X_{\tilde{\nu}} (e_0,\tilde{v})=-(2\phi)^{-1} \la v \times \nabla_{\mathbb{R}^3} \phi, \tilde{\nu}\ra_{\mathbb{R}^3},
\end{align*}
where $(e_0,\tilde{u}, \tilde{v}):=(\phi^{1/2}\xi,\phi^{-1/2}u,\phi^{-1/2}v)$ is the $S^1$-invariant associated orthonormal frame of $N$ and $\Gemini^{\mathbb{R}^3}_{\nu}$ is the Euclidean second fundamental form of $\Sigma\subset U$ with respect to $\nu$.
\end{lemma}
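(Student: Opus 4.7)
The plan is to work in the global orthonormal frame $\{e_0,e_1,e_2,e_3\}$ of $(X,g)$ introduced in Lemma \ref{Structure}, where $e_0=\phi^{1/2}\xi$ and $e_i=\phi^{-1/2}\partial_{x_i}$ for $i=1,2,3$, and then use the structure equations already computed there. The crucial observation is the following identification: if $u=\sum_i u^i\partial_{x_i}$, $v=\sum_i v^i\partial_{x_i}$ and $\nu=\sum_i\nu^i\partial_{x_i}$ are the Euclidean orthonormal vectors on $\Sigma\subset U$, then their rescaled horizontal lifts can be written as
\[
\tilde u=\sum_{i=1}^{3} u^i e_i,\qquad \tilde v=\sum_{i=1}^{3} v^i e_i,\qquad \tilde\nu=\sum_{i=1}^{3} \nu^i e_i,
\]
so in particular $\tilde\nu$ is horizontal and $\{u^i,v^i,\nu^i\}$ are functions pulled back from $U$. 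Since these components are $S^1$-invariant we have $e_0(u^i)=e_0(v^i)=e_0(\nu^i)=0$, while $e_j(u^i)=\phi^{-1/2}\partial_{x_j}u^i$, and similarly for $v^i,\nu^i$.

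With this set-up one computes $\Gemini^{X}_{\tilde\nu}(X,Y)=\la\nabla_X Y,\tilde\nu\ra$ for every pair of frame vectors on $N$ by plugging the expansions $\nabla_{e_i}e_j$, $\nabla_{e_i}e_0$ and $\nabla_{e_0}e_0$ from Lemma \ref{Structure} into
\[
\nabla_{\tilde u}\tilde u=\sum_{i,j}u^i e_i(u^j)\,e_j+\sum_{i,j}u^i u^j\nabla_{e_i}e_j,
\]
and its analogues for the other combinations, and then pairing with $\tilde\nu=\nu^k e_k$. Four systematic cancellations produce the stated formulas: $(i)$ the terms along $e_0$ in $\nabla_{e_i}e_j$ and $\nabla_{e_0}e_i$ disappear when paired with the horizontal $\tilde\nu$; $(ii)$ the terms of shape $\la u,\nabla_{\mathbb{R}^3}\phi\ra\tilde u$ in $\nabla_{\tilde u}\tilde u$ and their mixed analogues vanish because $\la u,\nu\ra_{\mathbb{R}^3}=\la v,\nu\ra_{\mathbb{R}^3}=\la u,v\ra_{\mathbb{R}^3}=0$; $(iii)$ the antisymmetric contractions $\epsilon_{ijk}u^i u^j$ and $\epsilon_{ijk}v^i v^j$ vanish identically, while $\epsilon_{ijk}u^i v^j$ pairs with $e_0$ and is killed by $(i)$; $(iv)$ the truly horizontal part $\phi^{-1/2}\sum_{i,j}u^i(\partial_{x_i}u^j)e_j$ paired with $\tilde\nu$ recognises $\phi^{-1/2}\la \nabla^{\mathbb{R}^3}_u u,\nu\ra_{\mathbb{R}^3}=\phi^{-1/2}\Gemini^{\mathbb{R}^3}_\nu(u,u)$, and similarly for the $(\tilde u,\tilde v)$ and $(\tilde v,\tilde v)$ entries.

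For the mixed entries $\Gemini^{X}_{\tilde\nu}(e_0,\tilde u)$ and $\Gemini^{X}_{\tilde\nu}(e_0,\tilde v)$ only the $\epsilon_{ijk}$ piece of $\nabla_{e_0}e_i$ (equivalently $\nabla_{e_i}e_0$) survives; the identity
\[
\sum_{j,k}\epsilon_{ijk}\,u^j\,\partial_{x_k}\phi=(u\times\nabla_{\mathbb{R}^3}\phi)_i
\]
then gives $\la\nabla_{\tilde u}e_0,\tilde\nu\ra=-(2\phi^{3/2})^{-1}\la u\times\nabla_{\mathbb{R}^3}\phi,\nu\ra_{\mathbb{R}^3}$, which in turn equals $-(2\phi)^{-1}\la u\times\nabla_{\mathbb{R}^3}\phi,\tilde\nu\ra_{\mathbb{R}^3}$ after absorbing the factor of $\phi^{-1/2}$ into $\tilde\nu=\phi^{-1/2}\nu$, and the same substitution converts the $\la\nabla_{\mathbb{R}^3}\phi,\nu\ra_{\mathbb{R}^3}$ coefficients in the diagonal entries to $\la\nabla_{\mathbb{R}^3}\phi,\tilde\nu\ra_{\mathbb{R}^3}$. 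No step is individually difficult; the only place where care is needed is the bookkeeping of the $\phi^{-1/2}$ factors arising from the frame rescaling and the sign convention for the cross product, which is precisely what matches the $-(2\phi)^{-1}$ coefficients claimed in the statement.
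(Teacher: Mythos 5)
Your proposal is correct and follows essentially the same route as the paper's own proof: expand $\tilde u,\tilde v,\tilde\nu$ in the orthonormal frame of Lemma \ref{Structure}, insert the structure equations into $\nabla_{\tilde u}\tilde u$ and its analogues, and eliminate the extra terms using horizontality of $\tilde\nu$, the $S^1$-invariance of the coefficient functions, the Euclidean orthonormality relations $\la u,\nu\ra=\la v,\nu\ra=\la u,v\ra=0$, and the antisymmetry of $\epsilon_{ijk}$. The bookkeeping of the $\phi^{-1/2}$ factors and the identification of the cross-product terms match the paper's computation, so nothing further is needed.
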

\begin{proof}
Since $(u,v)$ is a local orthonormal frame for $\Sigma\subset U$ with respect to the Euclidean metric, we can write $u=\sum_{i=1}^3 u_i\partial_i$ and $v=\sum_{i=1}^3 v_i\partial_i$, satisfying $\sum_{i=1}^3 u_i^2=1$, $\sum_{i=1}^3 v_i^2=1$ and $\sum_{i=1}^3 u_i v_i=0$. Denoting by $e_i:=\phi^{-{1/2}} \partial_i$, we use Lemma \ref{Structure} to carry out explicitly the following computations:{\small
\begin{align*}
      &\Gemini^X_{\tilde{\nu}}(e_0,e_0)=g\hspace{-3pt}\left(\frac{1}{2\phi^{3/2}}\sum_{i=1}^3 \frac{\partial\phi}{\partial x_i}e_i,\tilde{\nu}\right)=\frac{1}{2\phi^2}g\left(\nabla_{\mathbb{R}^3} \phi,\tilde{\nu}\right)\hspace{-2pt};\\
      &\Gemini^X_{\tilde{\nu}} (\tilde{u},\tilde{u})=g\hspace{-3pt}\left(\sum_{i,j=1}^3 \phi^{-1} u_i \frac{\partial u_j}{\partial x_i} \partial_j +\frac{1}{2\phi^{3/2}} \sum_{i,j=1}^3 \frac{\partial\phi}{\partial x_j} u_i u_j e_i-\frac{1}{2\phi^{3/2}}\sum_{i,k=1}^3 \frac{\partial\phi}{\partial x_k} u_i^2 e_k,\tilde{\nu}   \right)\hspace{-2pt};\\
      &\Gemini^X_{\tilde{\nu}} (\tilde{v},\tilde{v})=g\hspace{-3pt}\left(\sum_{i,j=1}^3\phi^{-1} v_i \frac{\partial v_j}{\partial x_i} \partial_j +\frac{1}{2\phi^{3/2}} \sum_{i,j=1}^3 \frac{\partial\phi}{\partial x_j} v_i v_j e_i-\frac{1}{2\phi^{3/2}}\sum_{i,k=1}^3 \frac{\partial\phi}{\partial x_k} v_i^2 e_k,\tilde{\nu}   \right)\hspace{-2pt};\\
      &\Gemini^X_{\tilde{\nu}} (\tilde{u},\tilde{v})=g\hspace{-3pt}\left(\sum_{i,j=1}^3 \phi^{-1} u_i \frac{\partial v_j}{\partial x_i} \partial_j +\frac{1}{2\phi^{3/2}} \sum_{i,j=1}^3 \frac{\partial\phi}{\partial x_j} u_i v_j e_i-\frac{1}{2\phi^{3/2}}\sum_{i,k=1}^3 \frac{\partial\phi}{\partial x_k} u_i v_i e_k,\tilde{\nu}   \right)\hspace{-2pt};\\
      &\Gemini^X_{\tilde{\nu}} (e_0,\tilde{u})=-\frac{1}{2\phi^{3/2}} g\hspace{-3pt}\left(\sum_{i,j,k=1}^3 \epsilon_{ijk} u_i \frac{\partial\phi}{\partial x_j} e_k,\tilde{\nu} \right)\hspace{-2pt};\\
      &\Gemini^X_{\tilde{\nu}} (e_0,\tilde{v})=-\frac{1}{2\phi^{3/2}} g\hspace{-3pt}\left(\sum_{i,j,k=1}^3 \epsilon_{ijk} v_i \frac{\partial\phi}{\partial x_j} e_k,\tilde{\nu} \right)\hspace{-2pt},
\end{align*}
}%
where we only used the definition of second fundamental form and that $e_0$ is $g$-normal to $\tilde{\nu}$. Observe that $\Gemini^{\mathbb{R}^{3}}_{\nu}(u,u)=\sum_{i,j}\la u_i \partial_i(u_j) \partial_j,\nu\ra_{\mathbb{R}^3}$, and that analogous formulas hold for $\Gemini^{\mathbb{R}^{3}}_{\nu}(u,v)$ and $\Gemini^{\mathbb{R}^{3}}_{\nu}(v,v)$. 

Clearly, we can write the second fundamental form in the following way: 
\begin{align*}
    &\Gemini^X_{\tilde{\nu}}(e_0,e_0)=(2\phi)^{-1} \la \nabla_{\mathbb{R}^3} \phi,\tilde{\nu}\ra_{\mathbb{R}^3};\\
    &\Gemini^X_{\tilde{\nu}} (\tilde{u},\tilde{u})=\phi^{-1/2} \Gemini^{\mathbb{R}^3}_{\nu} (u,u)+(2\phi)^{-1} \la u(\phi)u -\nabla_{\mathbb{R}^3} \phi,\tilde{\nu}\ra_{\mathbb{R}^3};\\
    &\Gemini^X_{\tilde{\nu}} (\tilde{v},\tilde{v})=\phi^{-1/2} \Gemini^{\mathbb{R}^3}_{\nu} (v,v)+(2\phi)^{-1} \la v(\phi)v -\nabla_{\mathbb{R}^3} \phi,\tilde{\nu}\ra_{\mathbb{R}^3};\\
    &\Gemini^X_{\tilde{\nu}} (\tilde{u},\tilde{v})=\phi^{-1/2} \Gemini^{\mathbb{R}^3}_{\nu} (u,v)+(2\phi)^{-1} \la v(\phi)u,\tilde{\nu}\ra_{\mathbb{R}^3};\\
    &\Gemini^X_{\tilde{\nu}} (e_0,\tilde{u})=-(2\phi)^{-1} \la u \times \nabla_{\mathbb{R}^3} \phi, \tilde{\nu}\ra_{\mathbb{R}^3};\\
    &\Gemini^X_{\tilde{\nu}} (e_0,\tilde{v})=-(2\phi)^{-1} \la v \times \nabla_{\mathbb{R}^3} \phi, \tilde{\nu}\ra_{\mathbb{R}^3},
\end{align*}
which yields the desired equations as $u\perp \tilde{\nu}$ and $v\perp \tilde{\nu}$.
\end{proof}

Analogously, we compute the mean curvature of a circle-invariant hypersurface of $X$ in term of the mean curvature of its projection. 
\begin{lemma} \label{mean curvature S^1 invariant}
Let $N$ be an $S^1$-invariant hypersurface in $(X,g)$ and let $\Sigma:=\pi(N)$ be the associated surface in $U$. Then, the mean curvature of $N$, which we denote by $\H^X_{N}$, has the form:
\begin{align*}
    \H^X_{N}&=-\frac{1}{2\phi^2}\nabla^\perp_{\mathbb{R}^3}\phi+\frac{1}{\phi}\H^{\mathbb{R}^3}_{\Sigma}\\
&=\frac{1}{\phi} (\nabla^\perp_{\mathbb{R}^3} \log \phi^{-1/2} +\H^{\mathbb{R}^3}_{\Sigma}),
\end{align*}
where $\H^{\mathbb{R}^3}_{\Sigma}$ is the Euclidean mean curvature of $\Sigma\subset U$.
\end{lemma}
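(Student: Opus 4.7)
The plan is to invoke Lemma \ref{SFF S^1 invariant} directly and take the trace of the second fundamental form against the $S^1$-invariant orthonormal frame. Since $(e_0,\tilde u,\tilde v)$ is a $g$-orthonormal frame on $N$ and $\tilde\nu$ is a $g$-unit normal, the scalar mean curvature with respect to $\tilde\nu$ is
\[
\H^X_{N,\tilde\nu}=\Gemini^X_{\tilde\nu}(e_0,e_0)+\Gemini^X_{\tilde\nu}(\tilde u,\tilde u)+\Gemini^X_{\tilde\nu}(\tilde v,\tilde v),
\]
so the whole computation reduces to summing the three diagonal entries already established.

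Plugging in the formulas from Lemma \ref{SFF S^1 invariant}, I expect the contribution $+(2\phi)^{-1}\la\nabla_{\mathbb{R}^3}\phi,\tilde\nu\ra_{\mathbb{R}^3}$ coming from $(e_0,e_0)$ to combine with the two copies of $-(2\phi)^{-1}\la\nabla_{\mathbb{R}^3}\phi,\tilde\nu\ra_{\mathbb{R}^3}$ from $(\tilde u,\tilde u)$ and $(\tilde v,\tilde v)$, leaving a single net contribution of $-(2\phi)^{-1}\la\nabla_{\mathbb{R}^3}\phi,\tilde\nu\ra_{\mathbb{R}^3}$. The two remaining $\phi^{-1/2}\Gemini^{\mathbb{R}^3}_\nu$ terms sum to $\phi^{-1/2}\H^{\mathbb{R}^3}_{\Sigma,\nu}$, where $\H^{\mathbb{R}^3}_{\Sigma,\nu}$ denotes the scalar Euclidean mean curvature of $\Sigma\subset U$ with respect to $\nu$.

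The next step will be to upgrade this scalar identity to the vectorial statement in the lemma. Multiplying by $\tilde\nu=\phi^{-1/2}\nu$ and rewriting $\la\nabla_{\mathbb{R}^3}\phi,\tilde\nu\ra_{\mathbb{R}^3}=\phi^{-1/2}\la\nabla_{\mathbb{R}^3}\phi,\nu\ra_{\mathbb{R}^3}$ produces the extra factor $\phi^{-1/2}$ needed to turn $(2\phi)^{-1}$ into $(2\phi^2)^{-1}$ and $\phi^{-1/2}\cdot\phi^{-1/2}$ into $\phi^{-1}$, yielding
\[
\H^X_N=-\frac{1}{2\phi^2}\nabla^\perp_{\mathbb{R}^3}\phi+\frac{1}{\phi}\H^{\mathbb{R}^3}_{\Sigma},
\]
under the natural identification of horizontal vectors on $X$ with vectors on $\mathbb{R}^3$.

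Finally, the second displayed equality is merely a rewriting: since $\nabla^\perp_{\mathbb{R}^3}\log\phi^{-1/2}=-\tfrac{1}{2\phi}\nabla^\perp_{\mathbb{R}^3}\phi$, one has $\frac{1}{\phi}\nabla^\perp_{\mathbb{R}^3}\log\phi^{-1/2}=-\frac{1}{2\phi^2}\nabla^\perp_{\mathbb{R}^3}\phi$, which converts the first form into the second. I do not anticipate any real obstacle here: the whole argument is pure bookkeeping on top of Lemma \ref{SFF S^1 invariant}, and the only point that merits care is tracking the rescaling factor $\phi^{-1/2}$ when passing between the Euclidean normal $\nu$ and the $g$-unit normal $\tilde\nu$.
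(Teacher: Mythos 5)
Your proposal is correct, and it reaches the identity by a slightly different route than the paper. The paper's proof does not trace Lemma \ref{SFF S^1 invariant}; instead it recomputes from scratch, using the structure equations of Lemma \ref{Structure} to evaluate $\H^X_N=(\nabla_{e_0}e_0+\nabla_{\tilde u}\tilde u+\nabla_{\tilde v}\tilde v)^{\perp}$ directly, splitting off the tangential part as $\frac{1}{2\phi^2}\left(-\nabla_{\mathbb{R}^3}\phi+\nabla^T_{\mathbb{R}^3}\phi\right)^{\perp}+\frac{1}{\phi}\H^{\mathbb{R}^3}_\Sigma$ and projecting. Your derivation instead treats the lemma as a corollary of Lemma \ref{SFF S^1 invariant}: in codimension one the mean curvature vector is the $\Gemini$-trace times the unit normal, so summing the three diagonal entries gives $\phi^{-1/2}\bigl(\Gemini^{\mathbb{R}^3}_\nu(u,u)+\Gemini^{\mathbb{R}^3}_\nu(v,v)\bigr)-(2\phi)^{-1}\la\nabla_{\mathbb{R}^3}\phi,\tilde\nu\ra_{\mathbb{R}^3}$, and your bookkeeping of the rescaling $\tilde\nu=\phi^{-1/2}\nu$ (which converts $(2\phi)^{-1}$ into $(2\phi^2)^{-1}$ and $\phi^{-1/2}\cdot\phi^{-1/2}$ into $\phi^{-1}$, with $\nabla^\perp_{\mathbb{R}^3}\phi=\la\nabla_{\mathbb{R}^3}\phi,\nu\ra\nu$) is exactly right, as is the final rewriting via $\nabla^\perp_{\mathbb{R}^3}\log\phi^{-1/2}=-\frac{1}{2\phi}\nabla^\perp_{\mathbb{R}^3}\phi$. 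Your route buys economy — no new covariant-derivative computation, and the cancellation pattern $(+1-1-1)$ among the $\la\nabla_{\mathbb{R}^3}\phi,\tilde\nu\ra$ terms is transparent — at the cost of depending on the full strength of Lemma \ref{SFF S^1 invariant} and on a choice of local unit normal; the paper's direct computation is self-contained modulo Lemma \ref{Structure} and exhibits the normal projection structure explicitly, which is mildly more informative but otherwise equivalent, since Lemma \ref{SFF S^1 invariant} was itself derived from the same structure equations.
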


\begin{proof}
Let $(u,v)$ be a local orthonormal frame for $\Sigma\subset U$ with respect to the Euclidean metric, i.e. $u=\sum_{i=1}^3 u_i\partial_i$, $v=\sum_{i=1}^3 v_i\partial_i$ satisfying $\sum_{i=1}^3 u_i^2=1$, $\sum_{i=1}^3 v_i^2=1$ and $\sum_{i=1}^3 u_i v_i=0$. A local orthonormal frame for $N$ is $(e_0,\tilde{u}, \tilde{v}):=(\phi^{1/2}\xi,\phi^{-1/2}u,\phi^{-1/2}v)$. We now compute the mean curvature of $N$, using Lemma \ref{Structure}, as follows:
\begin{align*}
   \H^{X}_N&=(\nabla_{e_0} e_0+\nabla_{\tilde{u}} \tilde{u}+\nabla_{\tilde{v}}\tilde{v})^{\perp}\\
   &=\frac{1}{2\phi^{3/2}}\left(\sum_{i=1}^3 \frac{\partial \phi}{\partial x_i} e_i+\sum_{i,j=1}^3 \frac{\partial \phi}{\partial x_j}e_i(u_i u_j+v_i v_j)-\sum_{i,j=1}^3 \frac{\partial \phi}{\partial x_j} e_j (u_i^2+v_i^2)\right)^\perp \\
   &\hspace{13pt}+\frac{1}{\phi} \H^{\mathbb{R}^3}_\Sigma\\
  &=\frac{1}{2\phi^{3/2}}\left(-\sum_{i=1}^3 \frac{\partial \phi}{\partial x_i} e_i +\sum_{i,j=1}^3 \frac{\partial \phi}{\partial x_j}e_i(u_i u_j+v_i v_j) \right)^{\perp}+\frac{1}{\phi} \H^{\mathbb{R}^3}_\Sigma\\
   &=\frac{1}{2\phi^{2}} \left(-{\nabla}_{\mathbb{R}^3} \phi +{\nabla^T_{\mathbb{R}^3}} \phi \right)^{\perp} +\frac{1}{\phi} \H^{\mathbb{R}^3}_\Sigma\\
   &=-\frac{1}{2\phi^2}\nabla^\perp_{\mathbb{R}^3}\phi+\frac{1}{\phi}\H^{\mathbb{R}^3}_{\Sigma}, 
\end{align*}
where $e_i=\phi^{-1/2}\partial_i$. 
\end{proof}

\begin{remark}
Observe that this result agrees with Corollary \ref{S1invarianthypersurfaces} and Lemma \ref{SFF S^1 invariant}. 

Indeed, if we denote by $\tilde{\H}_{\Sigma}$ the mean curvature of $\Sigma$ in $(U,\phi^{1/2}g_{\mathbb{R}^3})$, then the following equation:
\[
\phi^{1/2}\tilde{\H}_{\Sigma}=\H^{\mathbb{R}^3}_{\Sigma}+\nabla^\perp_{\mathbb{R}^3} \log \phi^{-1/2}, 
\]
is precisely the formula for the mean curvature under conformal change of metric. 
The other claim is obvious.
\end{remark}

\subsection{Barriers for minimal hypersurfaces} In the multi-Eguchi--Hanson and multi-Taub-- NUT spaces, we can use a barrier argument to prove that there are no compact minimal hypersurfaces outside certain regions. If we choose the points 
lying on a line, then, this argument is enough to prove the non-existence of compact minimal hypersurfaces. 

\begin{lemma} \label{SphereBarrierHypLemma}
Let $(X,g)$ be a multi-Eguchi--Hanson or a multi-Taub--NUT space and let $N_r$ be the $S^1$-invariant hypersurface in $X$ corresponding to the Euclidean sphere $S_r (0):=\{x\in U: \av{x}_{\mathbb{R}^3}=r\}\subset U$, i.e. $\pi(N_r)=S_r(0)$ for some $r\in\mathbb{R}^+ \setminus\{\av{p_i}_{\mathbb{R}^3}\}_{i=1}^k$. 
Then, $N_r$ is strictly $3$-convex with respect to the interior of the sphere for all $r> 4/3 \max_i \av{p_i}_{\mathbb{R}^3}$ and all $r<\min \{\av{p_i}_{\mathbb{R}^3}: \av{p_i}_{\mathbb{R}^3}>0\}$.  
\end{lemma}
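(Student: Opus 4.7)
Since $N_r$ is a hypersurface in the $4$-manifold $X$, the tangent space $T_x N_r$ is itself $3$-dimensional, so $3$-convexity with respect to the interior reduces to strict positivity of the scalar mean curvature of $N_r$ with respect to the inward pointing unit normal. The Euclidean outward unit normal to $S_r(0) \subset \mathbb{R}^3$ is $\hat{x} := x/r$, and one has $\H^{\mathbb{R}^3}_{S_r(0)} = -\frac{2}{r}\hat{x}$ together with $\nabla^\perp_{\mathbb{R}^3}\phi = (\partial_r\phi)\hat{x}$, where $\partial_r$ is the radial derivative at fixed angle. Feeding these into Lemma \ref{mean curvature S^1 invariant} and pairing $\H^X_{N_r}$ with the unit $g$-inward normal $\tilde{\nu}_{\mathrm{in}} = -\phi^{-1/2}\hat{x}$ (recalling $g = \phi^{-1}\eta^2 + \phi g_{\mathbb{R}^3}$), a short calculation shows that the scalar mean curvature of $N_r$ with respect to $\tilde{\nu}_{\mathrm{in}}$ equals $\frac{1}{2\phi^{3/2}}\bigl(\partial_r\phi + \tfrac{4\phi}{r}\bigr)$. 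Hence strict $3$-convexity is equivalent to $\partial_r\phi + 4\phi/r > 0$ everywhere on $S_r(0)$.

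To make this explicit, I would substitute $\phi = m + \sum_{i=1}^k (2 d_i)^{-1}$, with $d_i := \av{x - p_i}_{\mathbb{R}^3}$ and $a_i := \av{p_i}_{\mathbb{R}^3}$. The law of cosines gives $\la x, p_i\ra = (r^2 + a_i^2 - d_i^2)/2$, hence
\[
\partial_r d_i^{-1} = -\frac{r^2 - a_i^2 + d_i^2}{2 r d_i^3}.
\]
After multiplying through by $4r > 0$, the inequality becomes the pointwise statement
\[
16 m + \sum_{i=1}^k \frac{7 d_i^2 + a_i^2 - r^2}{d_i^3} > 0 \qquad \text{on } S_r(0).
\]
Since $m \geq 0$, it suffices to show that each summand in the sum is strictly positive.

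For the outer range $r > \frac{4}{3}\max_i a_i$, the triangle inequality yields $d_i \geq r - a_i > 0$, and so
\[
7 d_i^2 + a_i^2 - r^2 \geq 7(r - a_i)^2 + a_i^2 - r^2 = 2(3r - 4 a_i)(r - a_i) > 0,
\]
with the case $a_i = 0$ handled separately via $7 d_i^2 - r^2 \geq 6 r^2 > 0$. For the inner range $r < \min\{a_i : a_i > 0\}$, if $a_i > 0$ then $a_i^2 > r^2$ forces the numerator to be positive; if $a_i = 0$ then $d_i = r$ and the numerator equals $6 r^2 > 0$. This completes the argument in both regimes.

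The main obstacle is pinning down the constant $4/3$: the factorization $7(r - a_i)^2 + a_i^2 - r^2 = 2(3r - 4 a_i)(r - a_i)$ degenerates exactly at $r = 4 a_i/3$, and at the point of $S_r(0)$ collinear with the origin and $p_i$ (where $d_i = r - a_i$) this bound is sharp. Hence no refinement of the triangle inequality can improve the threshold, and this is what dictates the precise constant appearing in the statement.
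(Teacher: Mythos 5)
Your proof is correct and takes essentially the same route as the paper: both reduce strict $3$-convexity to strict positivity of the inward mean curvature via Lemma~\ref{mean curvature S^1 invariant} (your condition $\partial_r\phi + 4\phi/r>0$ is just the paper's $\la \nabla\phi,x\ra + 4\phi>0$ divided by $r$), and your per-summand numerator $7d_i^2+a_i^2-r^2$ estimated with the triangle inequality $d_i\geq r-\av{p_i}$ is, by the law of cosines, exactly the paper's Cauchy--Schwarz bound, yielding the identical factorization $2(3r-4\av{p_i})(r-\av{p_i})$ and the same treatment of the $\av{p_i}=0$ and inner-range cases. Your closing remark on why $4/3$ is forced is a correct observation about sharpness of the per-summand estimate, consistent with the paper.
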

\begin{proof}
Since we know that the mean curvature of $S_r(0)\subset \mathbb{R}^3$ is $-\frac{2x}{\av{x}^2_{\mathbb{R}^3}}$, we can use Lemma \ref{mean curvature S^1 invariant} to compute the mean curvature of $N_r$:
\begin{align*}
   \H^X_{N_r}&=-\frac{1}{\phi}\left(\frac{1}{2\phi} \la \nabla\phi,x\ra \frac{x}{\av{x}^2}+\frac{2x}{\av{x}^2}\right)\\
    &=-\frac{1}{2\phi^2} \frac{x}{\av{x}^2} \left(\la \nabla\phi,x\ra +4\phi \right),
\end{align*}
where $\la.,.\ra$, $\av{\hspace{1pt}.\hspace{1pt}}$ and $\nabla$ are with respect to the Euclidean metric. Since $\phi$ is positive, it's enough to show that $\la \nabla\phi,x\ra +4\phi>0$ everywhere. 
Explicitly we compute:
\begin{align*}
    \la \nabla\phi,x\ra +4\phi &\geq -\frac{1}{2}\sum_{i=1}^k \frac{\la x-p_i,x\ra}{\av{x-p_i}^3}+4\sum_{i=1}^k \frac{1}{2\av{x-p_i}}\\
    &=\sum_{i=1}^k \frac{1}{2\av{x-p_i}^3} \left( 4\av{x-p_i}^2-\la x-p_i,x\ra \right)\\
       &=\sum_{i=1}^k \frac{1}{2\av{x-p_i}^3} \left( 4(\av{x}^2-2\la x, p_i\ra +\av{p_i}^2)-(\av{x}^2-\la {p_i}, x\ra) \right)\\
    &=\sum_{i=1}^k \frac{1}{2\av{x-p_i}^3} \left( 3\av{x}^2-7\la x,p_i\ra +4\av{p_i}^2\right)\\
    &\geq \sum_{i=1}^k \frac{1}{2\av{x-p_i}^3} \left( 3\av{x}-4\av{p_i}\right)(\av{x}-\av{p_i}),
\end{align*}
where we used $m\geq0$ and Cauchy--Schwarz inequality. Observe that, if $\av{p_i}=0$ for some $i$, then the related summand in the last line is automatically positive for all $\av{x}$. Under the conditions on $\av{x}=r$, it is clear that $\la \nabla\phi,x\ra +4\phi>0$ and hence we conclude.
\end{proof}

\begin{thm} \label{SphereBarrierHyp}
Let $(X,g)$ be a multi-Eguchi--Hanson or a multi-Taub--NUT space. Then, compactly supported stationary integral 3-varifolds need to be contained in $\pi^{-1} (\{x\in U: \av{x}_{\mathbb{R}^3}\leq4/3 \max_i \av{p_i}_{\mathbb{R}^3}\})$. Moreover, there are no compactly supported stationary integral 3-varifolds which are contained in $\pi^{-1} \left(\left\{x\in U: \av{x}_{\mathbb{R}^3}< \min \{\av{p_i}_{\mathbb{R}^3}: \av{p_i}_{\mathbb{R}^3}>0\}\right\}\right)$.
\end{thm}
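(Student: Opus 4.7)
The plan is to combine Lemma \ref{SphereBarrierHypLemma} with the generalized barrier principle of Corollary \ref{BarrierGMT}. Since $X$ is four-dimensional, compactly supported stationary integral $3$-varifolds sit in codimension one, so the strict $3$-convexity of the circle-invariant spheres $N_r$ established in the lemma provides exactly the hypothesis required to apply the corollary with $k=3$. In both assertions the strategy is the same: pick the extremal radius reached by the support of the varifold, and use the corresponding sphere as a barrier.

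For the first assertion, I would argue by contradiction. Assume a nonzero stationary integral $3$-varifold $V$ has some point of $\pi(\mathrm{supp}\, V)$ with Euclidean norm strictly greater than $R_0:=\tfrac{4}{3}\max_i \av{p_i}_{\R^3}$, and set
$$R^{\ast}:=\sup\{\,\av{x}_{\R^3}:x\in\pi(\mathrm{supp}\, V)\,\}.$$
By compactness of $\mathrm{supp}\, V$ the supremum is attained and finite, and by hypothesis $R^{\ast}>R_0>\max_i\av{p_i}_{\R^3}$, so in particular $R^{\ast}\neq\av{p_i}_{\R^3}$ for every $i$. Thus Lemma \ref{SphereBarrierHypLemma} applies to $N_{R^{\ast}}$, the boundary of the domain $\Omega:=\pi^{-1}(\{\av{x}_{\R^3}<R^{\ast}\})$, yielding strict $3$-convexity with respect to $\Omega$. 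Since $\mathrm{supp}\, V\subset\overline{\Omega}$ and $\mathrm{supp}\, V\cap\partial\Omega\neq\emptyset$ by construction of $R^{\ast}$, Corollary \ref{BarrierGMT} gives a contradiction, so $\mathrm{supp}\, V\subset\pi^{-1}(\{\av{x}_{\R^3}\leq R_0\})$.

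For the second assertion, set $R_1:=\min\{\av{p_i}_{\R^3}:\av{p_i}_{\R^3}>0\}$ and suppose toward a contradiction that a nonzero stationary integral $3$-varifold $V$ is contained in $\pi^{-1}(\{\av{x}_{\R^3}<R_1\})$. Defining $R^{\ast}$ as above, compactness gives $R^{\ast}<R_1$, while $R^{\ast}>0$ because an integral $3$-varifold of positive total mass cannot be supported on the single fibre $\pi^{-1}(0)$, which has Hausdorff $3$-measure zero (being either a point or a circle). Hence $R^{\ast}\in(0,R_1)$, so $R^{\ast}\neq\av{p_i}_{\R^3}$ for every $i$, and Lemma \ref{SphereBarrierHypLemma} again gives that $N_{R^{\ast}}$ is strictly $3$-convex with respect to its interior. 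The same barrier argument through Corollary \ref{BarrierGMT} produces the contradiction.

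I expect no substantive obstacles: the proof is essentially a one-line application of the two previous results, iterated inward and outward. The only delicate point is ensuring that $R^{\ast}$ avoids the forbidden radii $\av{p_i}_{\R^3}$ (at which Lemma \ref{SphereBarrierHypLemma} is silent), which is immediate from the strict inequalities defining $R_0$ and $R_1$, together with the elementary positivity of $R^{\ast}$ in the second case.
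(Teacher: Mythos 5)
Your proof is correct and follows essentially the same route as the paper's: pick the extremal radius $R^{\ast}$ attained by the compact support, observe it avoids the forbidden radii $\av{p_i}_{\mathbb{R}^3}$, and apply Lemma \ref{SphereBarrierHypLemma} together with Corollary \ref{BarrierGMT} to the barrier $N_{R^{\ast}}$. The only difference is that you make explicit details the paper leaves implicit, notably that $R^{\ast}>0$ in the second assertion because $\pi^{-1}(0)$ (a point or a circle) has vanishing $3$-dimensional Hausdorff measure.
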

\begin{proof}
Assume by contradiction that there is a compactly supported stationary integral 3-varifolds $T$ which is not contained in $\pi^{-1} (\{x\in U: \av{x}_{\mathbb{R}^3}\leq4/3 \max_i \av{p_i}_{\mathbb{R}^3}\})$. By assumption, there exists an $r>4/3 \max_i \av{p_i}_{\mathbb{R}^3}$ such that $T$ is supported in the interior of $N_r$ and the support of $T$ intersects $N_r$. $N_r$ is the $S^1$-invariant hypersurface corresponding to the Euclidean sphere $S_r (0)$. Observing that $N_r$ is strictly 3-convex by Lemma \ref{SphereBarrierHypLemma}, we get a contradiction to Corollary \ref{BarrierGMT}. A similar argument works for the second part of the statement.
\end{proof}
Observe that Theorem \ref{IntroHypersurfaceSphere} is the special case of Thoerem \ref{SphereBarrierHyp} in the smooth setting. If we also assume orientability, the proof can be simplified by using Corollary \ref{BarrierSmooth} instead of Corollary \ref{BarrierGMT}.

\begin{figure} [t] 

\begin{tikzpicture}[scale=0.7]
\draw [-,thick, green, pattern=north west lines, pattern color=green!40] (-3,0) to [out=90,in=160] (-1,0.8)
to [out=-20,in=180] (0,0.6) to [out=0, in=90] (1.4,0)
to [out=-90, in=45] (-0.2,-0.4) to [out=225, in=-90] (-3,0);
\draw[black,thick] (0,0) circle (2 cm);
\draw[black,thick] (-2,0) arc (180:360:2 cm and 0.4cm);
\draw[black, thick, dashed] (2,0) arc (0:180:2 cm and 0.4cm);
\fill[black] (-1,0) circle (0.05cm) node[black,left] {$p_1$};
\fill[black] (1.2,+0.1) circle (0.05cm) node[black,right] {$p_2$};
\fill[black] (0.6,-0.1) circle (0.05cm) node[black,right] {$p_3$};
\fill[black] (0,0) circle (0.05cm) node[black,left] {$O$};
\draw[dashed, thick,red, rotate=+47] (0,0)--(2,0);
\node[right, red] at (1.22,1.65) {$4/3 \max \lvert p_i \rvert$};

\draw[blue, rotate=37] (3,0) arc (0:352:3 cm);
\draw[blue] (-3,0) arc (180:360:3 cm and 0.9cm);
\draw[blue, dashed] (3,0) arc (0:180:3 cm and 0.9cm);
\draw[blue, thick,->] (-3,0)--(-2.2,0); 
\node[blue] at (-2.6,0.24) {$\H$};
\end{tikzpicture}
\caption{Spherical barriers used in Theorem \ref{SphereBarrierHyp}.}
\end{figure}
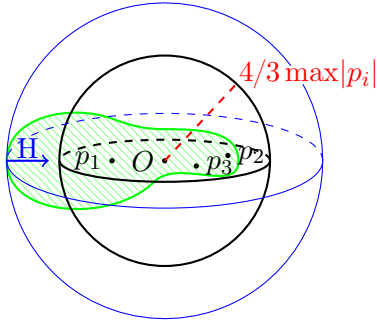

\begin{lemma} \label{CylinderBarrierHypLemma}
Let $(X,g)$ be a multi-Eguchi--Hanson or a multi-Taub--NUT space and let $N_r$ be the $S^1$-invariant hypersurface in $X$ corresponding to the Euclidean cylinder $\Sigma_r:=\{x\in U: x_1^2+x_2^2=r^2\}\subset U$, i.e. $\pi(N_r)=\Sigma_r$ for some $r\in\mathbb{R}^+\setminus\{r_i\}_{i=1}^k$, where $r_i:=\sqrt{(p_i)_1^2+(p_i)_2^2}$. 
Then, $N_r$ is strictly $3$-convex with respect to the interior of the cylinder for all $r> 2 \max_i r_i$ and all $r<\min \{r_i: r_i >0\}$.

\end{lemma}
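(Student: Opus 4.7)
The plan is to follow the same recipe as in Lemma \ref{SphereBarrierHypLemma}, namely reduce $3$-convexity of the hypersurface $N_r$ in the $4$-manifold $X$ to a positivity statement for its mean curvature, then compute that mean curvature via Lemma \ref{mean curvature S^1 invariant}, and finally exploit an algebraic inequality tailored to the factorization $(r-r_i)(r-2r_i)$.

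First, since $N_r$ has codimension one in the $4$-manifold $X$, its tangent space is $3$-dimensional, so by Lemma \ref{Linear Algebra translation} strict $3$-convexity with respect to the interior of the cylinder is equivalent to the mean curvature vector $\H^X_{N_r}$ pointing strictly inward. The Euclidean mean curvature of the cylinder $\Sigma_r$ with respect to the outward radial normal $\hat\rho = r^{-1}(x_1,x_2,0)$ is $\H^{\mathbb{R}^3}_{\Sigma_r} = -\hat\rho/r = -\rho/r^2$, where $\rho := (x_1,x_2,0)$. Combining this with $\nabla^\perp_{\mathbb{R}^3}\phi = \langle \nabla\phi,\rho\rangle\, \rho/r^2$ and Lemma \ref{mean curvature S^1 invariant} yields
\begin{equation*}
\H^X_{N_r} = -\frac{\rho}{\phi\, r^2}\left(\frac{1}{2\phi}\langle \nabla\phi,\rho\rangle + 1\right),
\end{equation*}
so $N_r$ is strictly $3$-convex with respect to the interior of the cylinder exactly when $\langle \nabla\phi,\rho\rangle + 2\phi > 0$ everywhere on $\Sigma_r$.

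Next, I would expand this expression. Writing $\tilde x=(x_1,x_2)$, $\tilde p_i=((p_i)_1,(p_i)_2)$, using $|\tilde x|=r$, $|\tilde p_i|=r_i$, and $m\geq 0$, one obtains
\begin{equation*}
\langle \nabla\phi,\rho\rangle + 2\phi \;=\; 2m + \sum_{i=1}^{k}\frac{2|x-p_i|^2 - \langle x-p_i,\rho\rangle}{2|x-p_i|^3}.
\end{equation*}
The key algebraic step, paralleling the sphere case, is the identity
\begin{equation*}
2|x-p_i|^2 - \langle x-p_i,\rho\rangle \;=\; r^2 - 3\langle \tilde p_i,\tilde x\rangle + 2r_i^2 + 2(x_3-(p_i)_3)^2,
\end{equation*}
which after Cauchy--Schwarz $\langle \tilde p_i,\tilde x\rangle \leq r_i r$ gives the bound
\begin{equation*}
2|x-p_i|^2 - \langle x-p_i,\rho\rangle \;\geq\; r^2 - 3r_i r + 2r_i^2 \;=\; (r-r_i)(r-2r_i).
\end{equation*}

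Finally, the two regimes follow immediately from this factorization: if $r>2\max_i r_i$, then each factor $(r-r_i)(r-2r_i)$ is strictly positive; if $0<r<\min\{r_i:r_i>0\}$, the two factors are both negative for every $p_i$ with $r_i>0$, while for any $p_i$ with $r_i=0$ the bound becomes simply $r^2>0$. In either case every summand is positive (with $2m\geq 0$), so $\langle \nabla\phi,\rho\rangle+2\phi>0$ on $\Sigma_r$ and $N_r$ is strictly $3$-convex. No step presents a real obstacle — the main subtlety is merely the sign bookkeeping in the cross term $-3\langle\tilde p_i,\tilde x\rangle$, which is what forces the cutoff $2\max_i r_i$ rather than $\max_i r_i$, exactly as the factor $4/3$ arose in the spherical case.
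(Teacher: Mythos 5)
Your proof is correct and takes essentially the same route as the paper: both reduce strict $3$-convexity to the inward-pointing mean curvature condition via Lemma \ref{mean curvature S^1 invariant} (your criterion $\la \nabla\phi,\rho\ra+2\phi>0$ is the paper's $\la \nabla\phi,\nu\ra+2\phi/r>0$ multiplied by $r$), and both establish it by the identical Cauchy--Schwarz estimate, dropping the term $2(x_3-(p_i)_3)^2$, to reach the factorization $(r-r_i)(r-2r_i)$.
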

\begin{proof}
As above, since we know that the mean curvature of $\Sigma_r\subset \mathbb{R}^3$ at $x=(r\cos \theta,r\sin \theta,x_3)$ is $-\frac{\nu}{r}$, where $\nu=(\cos \theta,\sin \theta,0)$, we can use Lemma \ref{mean curvature S^1 invariant} to compute the mean curvature of $N_r$:
\begin{align*}
    \H^X_{N_r}&=-\frac{1}{\phi}\left(\frac{1}{2\phi} \la \nabla\phi,\nu\ra \nu+\frac{\nu}{r} \right)\\
    &=-\frac{1}{2\phi^2} \nu \left(\la \nabla\phi,\nu \ra +\frac{2\phi}{r} \right),
\end{align*}
where $\la.,.\ra$, $\av{\hspace{1pt}.\hspace{1pt}}$ and $\nabla$ are with respect to the Euclidean metric. Clearly, it suffices to show that $\la \nabla\phi,\nu \ra +\frac{2\phi}{r}>0$ everywhere. As in Lemma \ref{SphereBarrierHypLemma}:
\begin{align*}
    \la \nabla\phi,\nu \ra +\frac{2\phi}{r} &\geq -\frac{1}{2}\sum_{i=1}^k \frac{\la x-p_i,\nu \ra}{\av{x-p_i}^3}+\frac{1}{r}\sum_{i=1}^k \frac{1}{\av{x-p_i}}\\
    &=\frac{1}{2r} \sum_{i=1}^k \frac{1}{\av{x-p_i}^3} \left( 2\av{x-p_i}^2-r\la x-p_i,\nu \ra \right)\\
    &\geq\frac{1}{2r} \sum_{i=1}^k \frac{1}{\av{x-p_i}^3} \left( 2r^2+2r_i^2+2((x)_3-(p_i)_3)^2 -3rr_i-r^2 \right)\\
    &\geq\frac{1}{2r} \sum_{i=1}^k \frac{1}{\av{x-p_i}^3} (r-r_i)(r-2r_i),
\end{align*}
where we used $m\geq0$, Cauchy--Schwarz inequality and $(x_3-(p_i)_3)^2\geq0$.

We conclude as in Lemma \ref{SphereBarrierHypLemma}.
\end{proof}

\begin{thm} \label{CylinderBarrierHyp}
Let $(X,g)$ be a multi-Eguchi--Hanson or a multi-Taub--NUT space. Then, compactly supported stationary integral 3-varifolds need to be contained in $\pi^{-1} (\{x\in U: \sqrt{x_1^2+x_2^2}\leq 2 \max_i r_i\})$, where $r^2_i=(p_i)_1^2+(p_i)_2^2$. Moreover, there are no compactly supported stationary integral 3-varifolds contained in $\pi^{-1} (\{x\in U: \sqrt{x_1^2+x_2^2}< \min \{r_i: r_i>0\}\})$.
\end{thm}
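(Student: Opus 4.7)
The plan is to mirror the proof of Theorem \ref{SphereBarrierHyp}, replacing the $S^1$-invariant spheres by the $S^1$-invariant cylinders $N_r$ of Lemma \ref{CylinderBarrierHypLemma}. The two crucial ingredients are already in place: Lemma \ref{CylinderBarrierHypLemma} supplies the strict $3$-convexity of $N_r$ with respect to its interior in both parameter ranges, and Corollary \ref{BarrierGMT} converts strict $k$-convexity of a hypersurface into a barrier statement for compactly supported stationary integral $k$-varifolds.

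For the first assertion I would argue by contradiction. Suppose $T$ is a compactly supported stationary integral $3$-varifold whose support is not contained in $\pi^{-1}(\{\sqrt{x_1^2+x_2^2}\leq 2\max_i r_i\})$. Since $\mathrm{supp}(T)$ is compact, the continuous function $\sqrt{x_1^2+x_2^2}\circ \pi$ attains a maximum $R$ on $\mathrm{supp}(T)$, and the assumption forces $R>2\max_i r_i$. Then $\mathrm{supp}(T)\subset \pi^{-1}(\{\sqrt{x_1^2+x_2^2}\leq R\})$, the boundary of this region is precisely $N_R$, and by construction $\mathrm{supp}(T)\cap N_R \neq \emptyset$. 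Lemma \ref{CylinderBarrierHypLemma} at this value of $R$ provides strict $3$-convexity of $N_R$ with respect to its interior, so Corollary \ref{BarrierGMT} produces the desired contradiction. As in Theorem \ref{SphereBarrierHyp}, in the smooth orientable setting one may use Corollary \ref{BarrierSmooth} instead.

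The second assertion would follow from the identical contradiction scheme: assuming $\mathrm{supp}(T)\subset \pi^{-1}(\{\sqrt{x_1^2+x_2^2} < \min\{r_i: r_i>0\}\})$, the supremum $R$ of $\sqrt{x_1^2+x_2^2}$ on $\mathrm{supp}(T)$ satisfies $R<\min\{r_i: r_i>0\}$, and again Lemma \ref{CylinderBarrierHypLemma} yields strict $3$-convexity of $N_R$ with respect to its interior, contradicting Corollary \ref{BarrierGMT}.

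The main point requiring slight care relative to the spherical case is that $N_R$ and its interior $\Omega_R:=\pi^{-1}(\{\sqrt{x_1^2+x_2^2}<R\})$ are unbounded, so Theorem \ref{boundarytointerior} is not directly applicable to $\overline{\Omega_R}$. Since $\mathrm{supp}(T)$ is compact, however, one may cut off the argument by intersecting $\Omega_R$ with a sufficiently large $\pi$-saturated ball whose boundary lies outside $\mathrm{supp}(T)$; the resulting bounded subdomain has smooth strictly $3$-convex boundary near the contact point $\mathrm{supp}(T)\cap N_R$, which is all that is needed to run the proof of Corollary \ref{BarrierGMT}. Alternatively, one can invoke the generalized barrier principle of \cite{Wh15} directly in $\Omega_R$, since compactness of $\mathrm{supp}(T)$ renders the boundedness of $\Omega$ irrelevant. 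Accordingly, I expect no genuine obstacle, with the only real content of the theorem being the geometric estimate already carried out in Lemma \ref{CylinderBarrierHypLemma}.
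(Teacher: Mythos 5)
Your proposal is correct and follows essentially the same route as the paper, whose proof of Theorem \ref{CylinderBarrierHyp} simply says it follows almost verbatim the proof of Theorem \ref{SphereBarrierHyp} with Lemma \ref{SphereBarrierHypLemma} replaced by Lemma \ref{CylinderBarrierHypLemma} — i.e., exactly your contradiction scheme of taking the maximal cylindrical radius $R$ on $\mathrm{supp}(T)$ and applying Corollary \ref{BarrierGMT}. Your final paragraph addressing the unboundedness of the cylindrical domain (via a saturated cutoff or by invoking White's principle directly, which only needs the barrier locally near the contact point) is a point the paper silently glosses over, and it is handled correctly.
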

\begin{proof}
The proof follows almost verbatim Theorem \ref{SphereBarrierHyp}, substituting Lemma \ref{SphereBarrierHypLemma} and the related sets with Lemma \ref{CylinderBarrierHypLemma} and the related sets. 
\end{proof}

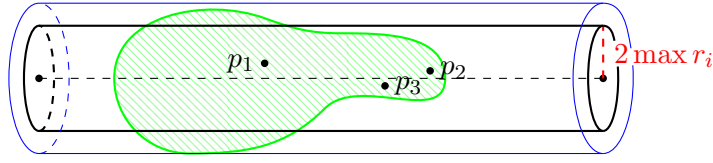
\begin{figure}[t]
\begin{tikzpicture} 
\draw [-,thick, green, pattern=north west lines, pattern color=green!40] (-3,0) to [out=90,in=160] (-1,0.8)
to [out=-20,in=180] (0,0.6) to [out=0, in=90] (1.4,0)
to [out=-90, in=45] (-0.2,-0.4) to [out=225, in=0] (-2,-1) to[out=180, in=-90] (-3,0);
\fill[black] (-1,0.2) circle (0.05cm) node[black,left] {$p_1$};
\fill[black] (1.2,+0.1) circle (0.05cm) node[black,right] {$p_2$};
\fill[black] (0.6,-0.1) circle (0.05cm) node[black,right] {$p_3$};
\fill[black] (3.5,0) circle (0.05cm);
\fill[black] (-4,0) circle (0.05cm);
\draw[dashed] (-4,0) -- (3.5,0);
\draw[red, thick, dashed] (3.5,0) -- (3.5,0.7);
\node at (3.51,0.3) [right, red] {$2 \max r_i$};

\draw[thick] (-4,0.7) arc (90:270:0.2 cm and 0.7cm);
\draw[thick, dashed] (-4,-0.7) arc (-90:90:0.2 cm and 0.7cm);
\draw[thick] (3.5,0.7) arc (90:270:0.2 cm and 0.7cm);
\draw[thick] (3.5,-0.7) arc (-90:13:0.2 cm and 0.7cm);
\draw[thick] (3.65,0.5) to [out=100, in=0] (3.5,0.7);
\draw[thick] (-4,0.7) -- (3.5,0.7);
\draw[thick] (-4,-0.7) -- (3.5,-0.7);

\draw[blue] (3.5,1) arc (90:270:0.4 cm and 1cm);
\draw[blue] (3.5,-1) arc (-90:10:0.4 cm and 1cm);
\draw[blue] (3.87,0.40) to [out=100, in=0] (3.5,1);

\draw[blue] (-4,1) -- (3.5,1);
\draw[blue] (-4,-1) -- (3.5,-1);
\draw[blue] (-4,1) arc (90:270:0.4 cm and 1cm);
\draw[blue,dashed] (-4,-1) arc (-90:90:0.4 cm and 1cm);
\end{tikzpicture}
\caption{Cylindrical barriers used in Theorem \ref{CylinderBarrierHyp}.}
\end{figure}

\begin{remark}
Since rotations and translations of $\mathbb{R}^3$ induce isometric representations of $(X,g)$, we can consider, as barriers, spheres centred in any point of $\mathbb{R}^3$ and cylinders with any axis. Even though we have a lot of barrier sets, these are not enough to prove the global non-existence of compact minimal hypersurfaces in the general case.

However, in the following important case we do have a global non-existence result.
\end{remark}

\begin{corollary} \label{Nonexistence Collinear Case}
Let $(X,g)$ be a multi-Eguchi--Hanson or a multi-Taub--NUT space with the $\{p_i\}_{i=1}^k$ lying on a line. Then, there are no compactly supported stationary integral 3-varifolds in $X$.
\end{corollary}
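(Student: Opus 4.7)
The plan is to reduce Corollary \ref{Nonexistence Collinear Case} to an immediate application of Theorem \ref{CylinderBarrierHyp} by exploiting the freedom to choose the axis of the cylindrical barriers. Since rotations and translations of $\mathbb{R}^3$ lift to isometries of $(X,g)$, I would first perform a rigid motion to arrange that the common line containing the $\{p_i\}_{i=1}^k$ is the $x_3$-axis. Under this normalization, the horizontal radii $r_i:=\sqrt{(p_i)_1^2+(p_i)_2^2}$ all vanish, and the family of cylinders $\Sigma_r=\{x_1^2+x_2^2=r^2\}$ now has all its foci on its axis.

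Next I would apply Theorem \ref{CylinderBarrierHyp} directly: the conclusion forces any compactly supported stationary integral $3$-varifold to be contained in
\[
\pi^{-1}\!\left(\left\{x\in U:\sqrt{x_1^2+x_2^2}\leq 2\max_i r_i\right\}\right)=\pi^{-1}\!\left(\{x\in U: x_1=x_2=0\}\right).
\]
Equivalently, by Lemma \ref{CylinderBarrierHypLemma}, every cylinder $N_r$ with $r>0$ is strictly $3$-convex inward when all $r_i=0$, so sliding $r\downarrow 0$ squeezes the support of any such varifold into this preimage.

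Finally, I would observe that $\{x\in U:x_1=x_2=0\}$ is at most $1$-dimensional, and hence its preimage under the $S^1$-bundle $\pi$ is at most $2$-dimensional in $X$. Since an integral $3$-varifold has support of Hausdorff dimension $3$, it cannot be contained in such a set. This contradiction yields the non-existence claim. There is essentially no obstacle here: the work was done in Theorem \ref{CylinderBarrierHyp}, and the collinear hypothesis is precisely what lets us align all singular points with a single cylinder axis so that the barrier radius $2\max_i r_i$ collapses to zero.
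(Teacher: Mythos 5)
Your proposal is correct and takes essentially the same route as the paper, which also invokes the isometry remark to normalize $(p_i)_1=(p_i)_2=0$ so that $r_i=0$ for all $i$, and then concludes by Theorem \ref{CylinderBarrierHyp}. Your closing dimension count (the preimage of the axis is at most $2$-dimensional, so it cannot contain the support of an integral $3$-varifold) is a detail the paper leaves implicit, and you have correctly supplied it.
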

\begin{proof}
By previous remark, we can chooses $(p_i)_1=0$ and $(p_i)_2=0$ for all $i$. It follows that $r_i=0$ for all $i$, hence, we can conclude by Theorem \ref{CylinderBarrierHyp}.
\end{proof}

As above, Theorem \ref{IntroHyperusrfaceCylinders} and Corollary \ref{IntroCorollaryCollinearCase} are the special cases of Theorem \ref{CylinderBarrierHyp} and Corollary \ref{Nonexistence Collinear Case} in the smooth setting.

\begin{lemma} \label{PlaneBarrierHypLemma}
Let $(X,g)$ be a multi-Eguchi--Hanson or a multi-Taub--NUT space and let $N_r$ be the $S^1$-invariant hypersurface in $X$ corresponding to the Euclidean plane $\Pi_r:=\{x\in U: x_3=r\}\subset U$, i.e. $\pi(N_r)=\Pi_r$ for some $r\in\mathbb{R}\setminus\{(p_i)_3\}_{i=1}^k$. 
Then, for all $r$ such that $r>\max_i (p_i)_3$ or $r<\min_i (p_i)_3$, $N_r$ is strictly $3$-convex with respect to the half-space not containing the singular points of $\phi$.
\end{lemma}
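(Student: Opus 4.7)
The plan is to apply Lemma \ref{mean curvature S^1 invariant} together with a direct sign analysis of $\partial_3 \phi$ on $\Pi_r$. Because $\Pi_r$ is an affine plane in Euclidean $\mathbb{R}^3$, it is totally geodesic and hence has vanishing Euclidean mean curvature, $\H^{\mathbb{R}^3}_{\Pi_r}=0$. Lemma \ref{mean curvature S^1 invariant} therefore reduces to
\[
\H^X_{N_r} \;=\; -\frac{1}{2\phi^2}\,\nabla^{\perp}_{\mathbb{R}^3}\phi \;=\; -\frac{\partial_3 \phi}{2\phi^2}\,\partial_3,
\]
where we have taken $\nu=\partial_3$ as a unit Euclidean normal to $\Pi_r$. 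Thus the sign of $\H^X_{N_r}$, along the only non-trivial direction, is controlled entirely by $\partial_3 \phi$ restricted to $\Pi_r$.

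A direct differentiation of the Newton-type potential $\phi = m + \sum_i \tfrac{1}{2|x-p_i|}$ yields, for $x=(x_1,x_2,r)\in\Pi_r$,
\[
\partial_3 \phi(x) \;=\; -\sum_{i=1}^k \frac{r-(p_i)_3}{2\,|x-p_i|^{3}}.
\]
If $r > \max_i (p_i)_3$ every summand is strictly negative, so $\partial_3\phi<0$ on $\Pi_r$ and $\H^X_{N_r}$ is a strictly positive multiple of $\partial_3$, i.e.\ it points into $\{x_3>r\}$. If instead $r < \min_i (p_i)_3$ the signs reverse and $\H^X_{N_r}$ points strictly into $\{x_3<r\}$. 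In both cases the mean curvature points into precisely the half-space not containing any $p_i$.

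Finally, since $N_r$ has codimension one in the $4$-manifold $X$, the remark following Lemma \ref{Linear Algebra translation} identifies $3$-convexity of $N_r$ with respect to an open domain with the property that its mean curvature vector points strictly inwards. Taking $\Omega$ to be the half-space not containing the singular points of $\phi$, the sign computation above is exactly that condition, which proves strict $3$-convexity. There is no substantive obstacle here beyond the reduction to Lemma \ref{mean curvature S^1 invariant}; the only point requiring care is to pin down the sign conventions for $\nu$ and for the inward normal to the chosen half-space, after which the conclusion is immediate from the fact that the vertical components of the Newton potentials $\tfrac{1}{2|x-p_i|}$ all have the same sign on a plane lying strictly above (or strictly below) all the poles.
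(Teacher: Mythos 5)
Your proposal is correct and takes essentially the same route as the paper, whose proof of Lemma \ref{PlaneBarrierHypLemma} merely states that it follows from Lemma \ref{mean curvature S^1 invariant} analogously to Lemmas \ref{SphereBarrierHypLemma} and \ref{CylinderBarrierHypLemma}; your computation --- $\H^{\mathbb{R}^3}_{\Pi_r}=0$, hence $\H^X_{N_r}=-\frac{\partial_3\phi}{2\phi^2}\,\partial_3$ with $\partial_3\phi=-\sum_{i=1}^k \frac{r-(p_i)_3}{2\av{x-p_i}^3}$ of a fixed sign on $\Pi_r$ under either hypothesis on $r$ --- is precisely the omitted calculation, and is sound. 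One small citation slip: the identification of strict $3$-convexity in codimension one with strictly inward-pointing mean curvature is the remark following Corollary \ref{BarrierSmooth}, not the remark after Lemma \ref{Linear Algebra translation}.
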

\begin{proof}
The proof, analogously to Lemma \ref{SphereBarrierHypLemma} and Lemma \ref{CylinderBarrierHypLemma}, relies on Lemma \ref{mean curvature S^1 invariant}.
\end{proof}

\begin{thm} \label{PlaneBarrierHyp}
Let $(X,g)$ be a multi-Eguchi--Hanson or a multi-Taub--NUT space. Then, there are no compact minimal hypersurfaces (compactly supported stationary integral 3-varifolds) contained in $\pi^{-1} (\{x\in U: {x_3}>\max_i {(p_i)_3}\})$ or in $\pi^{-1} (\{x\in U: x_3<\min {(p_i)_3}\})$.
\end{thm}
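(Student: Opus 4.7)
The plan is to adapt verbatim the barrier arguments of Theorem~\ref{SphereBarrierHyp} and Theorem~\ref{CylinderBarrierHyp}, replacing the spherical or cylindrical barriers with the planar barriers already supplied by Lemma~\ref{PlaneBarrierHypLemma}. Since the strict $3$-convexity of $N_r$ for the appropriate $r$ has been established in that lemma, the only remaining task is a short sliding argument together with an appeal to the generalized barrier principle.

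I would argue by contradiction for the first assertion. Suppose $T$ is a compactly supported stationary integral $3$-varifold whose support lies in $\pi^{-1}(\{x_3 > r_0\})$, where $r_0 := \max_i (p_i)_3$. The fixed points of the $S^1$-action lie over $\{p_i\}$, hence at heights $\leq r_0$, so $\mathrm{supp}(T)$ sits inside the smooth locus of $X$ and the continuous function $x_3\circ\pi$ is well defined on it. By compactness this function attains its infimum on $\mathrm{supp}(T)$ at some value $r_\ast > r_0$.

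Setting $\Omega := \pi^{-1}(\{x_3 > r_\ast\})$, we obtain $\mathrm{supp}(T) \subset \overline{\Omega}$ with $\mathrm{supp}(T) \cap \partial\Omega = \mathrm{supp}(T) \cap N_{r_\ast} \neq \emptyset$. Since $r_\ast > r_0 = \max_i (p_i)_3$, Lemma~\ref{PlaneBarrierHypLemma} tells us that $N_{r_\ast}$ is strictly $3$-convex with respect to $\Omega$, so Corollary~\ref{BarrierGMT} supplies the contradiction. The second assertion is obtained in the identical fashion, now taking $r^\ast := \sup_{\mathrm{supp}(T)} (x_3 \circ \pi) < \min_i (p_i)_3$ and using the half-space $\pi^{-1}(\{x_3 < r^\ast\})$, whose boundary $N_{r^\ast}$ is again strictly $3$-convex by Lemma~\ref{PlaneBarrierHypLemma}.

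I do not anticipate serious obstacles, but one minor point to address is that the barrier domain $\Omega$ is unbounded. This is harmless: Corollary~\ref{BarrierGMT} rests on White's maximum principle, which is local in nature, and the Harvey--Lawson function is built from the signed distance to $\partial\Omega$ in a neighbourhood of the compact contact set $\mathrm{supp}(T) \cap N_{r_\ast}$. If preferred, one can instead intersect $\Omega$ with a large Euclidean slab containing $\pi(\mathrm{supp}(T))$ and run the same argument on a bounded piece; no further geometric input is required.
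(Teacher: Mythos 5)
Your proposal is correct and is essentially the paper's own proof: the paper likewise disposes of Theorem \ref{PlaneBarrierHyp} by the sliding argument of Theorem \ref{SphereBarrierHyp}, combining Lemma \ref{PlaneBarrierHypLemma} with Corollary \ref{BarrierGMT}. Your closing remark on the unboundedness of the half-space $\Omega$ (a point the paper passes over silently) is a sensible extra precaution, and your resolution via the locality of White's maximum principle is the right one.
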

\begin{proof}
As in Theorem \ref{SphereBarrierHyp}, it is an application of Corollary \ref{BarrierGMT} together with Lemma \ref{PlaneBarrierHypLemma}.
\end{proof}

\begin{remark}
It is easy to see that the results in this subsection are still true for multi-centred Gibbons--Hawking spaces.
\end{remark}

\subsection{Barriers for minimal submanifolds of higher codimension}
Similarly to the hypersurface case, in the multi-Eguchi--Hanson and multi-Taub--NUT spaces, we can use a barrier argument to prove that there are no compact minimal submanifolds outside certain regions. 

\begin{lemma} \label{SphereBarrierLemma}
Let $(X,g)$ be a multi-Eguchi--Hanson or a multi-Taub--NUT space and let $N_r$ be the $S^1$-invariant hypersurface in $X$ corresponding to the Euclidean sphere $S_r (0):=\{x\in U: \av{x}_{\mathbb{R}^3}=r\}\subset U$, i.e. $\pi(N_r)=S_r(0)$ for some $r\in\mathbb{R}^+\setminus\{\av{p_i}_{\mathbb{R}^3}\}_{i=1}^k$. Then, $N_r$ is strictly convex with respect to the interior of the sphere for all $r> C \max_i \av{p_i}_{\mathbb{R}^3}$, where $C\approx 5.07$ is the only real root of the polynomial: $-x^3+4x^2+5x+2$. Moreover, if $p_i=0$ for some $i$, then, there exists an $r_0$ small enough such that $N_r$ is strictly convex for all $r<r_0$.
\end{lemma}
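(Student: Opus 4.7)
The plan is to apply Lemma \ref{SFF S^1 invariant} directly, diagonalise the resulting $3\times 3$ matrix, and reduce $1$-convexity to a single pointwise inequality on $\phi$ which can then be attacked with the explicit formula. Pick a local oriented orthonormal frame $(u,v)$ on $S_r(0)\subset\mathbb{R}^3$ so that $(u,v,\hat{x})$ is positively oriented, where $\hat{x}=x/r$; the inward unit normal is $\nu=-\hat{x}$ and
\[
\Gemini^{\mathbb{R}^3}_{\nu}(u,u)=\Gemini^{\mathbb{R}^3}_{\nu}(v,v)=\tfrac{1}{r},\qquad\Gemini^{\mathbb{R}^3}_{\nu}(u,v)=0.
\]
Decomposing $\nabla_{\mathbb{R}^3}\phi=a\hat{x}+bu+cv$ and substituting into the six formulas of Lemma \ref{SFF S^1 invariant}, the matrix of $\Gemini^{X}_{\tilde{\nu}}$ in the basis $(e_0,\tilde{u},\tilde{v})$ becomes
\[
\begin{pmatrix}\beta & \gamma & \delta\\\gamma & \alpha & 0\\\delta & 0 & \alpha\end{pmatrix},\qquad\alpha=\frac{1}{r\phi^{1/2}}+\frac{a}{2\phi^{3/2}},\ \beta=-\frac{a}{2\phi^{3/2}},\ \gamma=\frac{c}{2\phi^{3/2}},\ \delta=-\frac{b}{2\phi^{3/2}}.
\]
Computing determinants (or using the $\alpha$-eigenvector $(0,\delta,-\gamma)$ to split off a $2\times 2$ block) the characteristic polynomial factors as $(\lambda-\alpha)\bigl[(\lambda-\alpha)(\lambda-\beta)-\gamma^2-\delta^2\bigr]$, so the three eigenvalues are $\alpha$ together with $\tfrac{1}{2}\bigl(\alpha+\beta\pm\sqrt{(\alpha-\beta)^2+4(\gamma^2+\delta^2)}\bigr)$.

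Since $\alpha+\beta=1/(r\phi^{1/2})>0$ automatically, positivity of all three eigenvalues is equivalent to $\alpha>0$ and $\alpha\beta>\gamma^2+\delta^2$. Using $b^2+c^2=|\nabla\phi|^2-a^2$ and $a=\langle\nabla\phi,\hat{x}\rangle$, the second condition rearranges to
\[
-\,2\phi\,\langle\nabla\phi,x\rangle \;>\; r^{2}|\nabla\phi|^{2}.
\]
Because $|a|\leq|\nabla\phi|$, this displayed inequality together with $a<0$ forces $\alpha>0$ as well, so the whole proof reduces to verifying the displayed inequality (and checking $a<0$) in the two regimes of the statement.

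Plugging in $\phi=m+\tfrac{1}{2}\sum_i|x-p_i|^{-1}$ and $\nabla\phi=-\tfrac{1}{2}\sum_i(x-p_i)|x-p_i|^{-3}$, one has $a<0$ as soon as $\langle x-p_i,x\rangle>0$ for every $i$, which holds in both regimes as in Lemma \ref{SphereBarrierHypLemma}. Estimating as in that lemma, the Cauchy--Schwarz bounds $\langle x-p_i,x\rangle\geq r^2-r|p_i|$ on the numerator and $r-|p_i|\leq|x-p_i|\leq r+|p_i|$ on the denominators reduce the large-$r$ inequality to a homogeneous cubic in the variable $t=r/\max_i|p_i|$; the positive cone of this cubic is precisely $-t^3+4t^2+5t+2>0$, identifying the constant $C$. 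For the small-radius claim, if $p_i=0$ for some $i$ then as $r\to 0$ the singular summand $(2r)^{-1}$ of $\phi$ and its gradient $-x/(2r^3)$ dominate, giving $-2\phi\langle\nabla\phi,x\rangle\sim(2r^2)^{-1}$ while $r^2|\nabla\phi|^2\sim(4r^2)^{-1}$, so the inequality holds with a definite margin for $r$ below some $r_0$, the bounded regular contributions being absorbed by that margin.

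The main obstacle will be sharpening the large-$r$ estimate to the exact cubic: naive triangle-inequality bounds on $\phi$ and $|\nabla\phi|$ yield a substantially weaker constant, and recovering $-t^3+4t^2+5t+2$ requires keeping the inner products $\langle x,p_i\rangle$ at linear order before passing to norms, carefully balancing one $p_i$ against another in the spirit of the hypersurface case.
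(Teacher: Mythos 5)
Your structural reduction is correct and is, modulo presentation, exactly the paper's: the paper also feeds the sphere data into Lemma \ref{SFF S^1 invariant} to get the same $3\times3$ matrix, and where you diagonalise and demand positive eigenvalues, the paper runs Sylvester's criterion, factoring the determinant as the first minor times $\frac{1}{2\phi^2}\bigl(\frac{\la \nabla_{\mathbb{R}^3}\phi,\nu\ra}{r}-(2\phi)^{-1}\av{\nabla_{\mathbb{R}^3}\phi}^2\bigr)$; both routes land on the identical key inequality $\av{\nabla\phi}^2+2\phi\la\nabla\phi,x\ra/\av{x}^2<0$. Your observation that this one inequality, combined with $\alpha+\beta=1/(r\phi^{1/2})>0$, also forces $\alpha>0$ — so that the paper's separate first-minor estimate (which already holds for $r>2\max_i\av{p_i}$) is subsumed — is a genuine if minor streamlining, and your small-radius asymptotics ($\tfrac{1}{2r^2}$ versus $\tfrac{1}{4r^2}$ with lower-order corrections) are precisely the paper's case analysis, though your parenthetical claim that $\la x-p_i,x\ra>0$ for \emph{every} $i$ in the small-$r$ regime is false for $p_i\neq0$ with $x$ pointing towards $p_i$; only the dominance of the singular summand, which you do invoke, rescues $a<0$ there.

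The genuine gap is the large-$r$ regime, which is the actual content of the lemma: the constant $C$ is never derived. The bounds you name — $\la x-p_i,x\ra\ge r^2-r\av{p_i}$ on numerators and $r\pm\av{p_i}$ on denominators, applied to $\phi$, $\la\nabla\phi,x\ra$ and $\av{\nabla\phi}^2$ \emph{separately} — do not produce $-t^3+4t^2+5t+2$; they reduce to an inequality of the shape $2(t-1)^5>t(t+1)^4$, whose threshold is near $t\approx13$, far from $C\approx5.07$, and your closing paragraph concedes this without supplying the repair. The missing idea in the paper's proof is a symmetrized pairwise decomposition: after dropping $m\ge0$, write both $\av{\nabla\phi}^2$ and $2\phi\la\nabla\phi,x\ra/\av{x}^2$ as double sums over ordered pairs $(i,j)$ with the common denominator $\av{x-p_i}^3\av{x-p_j}^3$ (symmetrizing the product $\phi\cdot\la\nabla\phi,x\ra$ over $i\leftrightarrow j$), so that the quantity to control is the per-pair numerator
\[
(I)=\la x-p_i,x-p_j\ra\av{x}^2-\bigl(\av{x}^2-\la p_i,x\ra\bigr)\av{x-p_j}^2-\bigl(\av{x}^2-\la p_j,x\ra\bigr)\av{x-p_i}^2,
\]
in which the $+\av{x}^4$ from the first term cancels one of the two $-\av{x}^4$'s from the others, leaving a single $-\av{x}^4$ plus terms of lower order in $\av{x}$; term-by-term Cauchy--Schwarz on the expansion then gives $(I)\le\av{x}\bigl(-\av{x}^3+4A\av{x}^2+5A^2\av{x}+2A^3\bigr)$ with $A=\max_i\av{p_i}$, which is where the cubic and hence $C$ come from. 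Any argument that first bounds $\phi$, $\av{\nabla\phi}$ and $\la\nabla\phi,x\ra$ individually and only then combines them destroys this pairwise cancellation of the leading quartic terms and cannot reach the stated constant, so as written your proposal proves the lemma only with a substantially larger $C$.
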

\begin{proof}
Let $\nu:=-x/\av{x}_{\mathbb{R}^3}$ be the unit normal for $S_r(0)\subset U$ pointing inward. We recall that, with respect to $\nu$, the second fundamental form of $S_r(0)$ is: 
$$\Gemini_{\nu}^{\mathbb{R}^3}(u,v)=\frac{1}{r} \la u,v\ra_{S_r(0)},$$ 
for all $u,v$ tangent vectors of $S_r(0)$. 

Given any $(u,v)$ local orthonormal frame for $S_r(0)$, Lemma \ref{SFF S^1 invariant} implies that the second fundamental form of $N_r$ with respect to $\tilde{\nu}:=\phi^{-1/2} \nu$, in the basis $(\phi^{-1/2} u, \phi^{-1/2} v, \phi^{1/2}\xi)$, can be written as the matrix:
$$\setlength{\arraycolsep}{1pt}
  \renewcommand{\arraystretch}{1.4}
  \small\begin{bmatrix}
\phi^{-1/2}\frac{1}{r}-(2\phi)^{-1} \la \nabla_{\mathbb{R}^3} \phi,\tilde{\nu}\ra & 0 & -(2\phi)^{-1} \la u \times \nabla_{\mathbb{R}^3} \phi, \tilde{\nu}\ra\\
0 & \phi^{-1/2}\frac{1}{r}-(2\phi)^{-1} \la \nabla_{\mathbb{R}^3} \phi,\tilde{\nu}\ra & -(2\phi)^{-1} \la v \times \nabla_{\mathbb{R}^3} \phi, \tilde{\nu}\ra\\
-(2\phi)^{-1} \la u \times \nabla_{\mathbb{R}^3} \phi, \tilde{\nu}\ra & -(2\phi)^{-1}\la v \times \nabla_{\mathbb{R}^3} \phi, \tilde{\nu}\ra & (2\phi)^{-1} \la \nabla_{\R^3} \phi,\tilde{\nu}\ra
\end{bmatrix}.
$$
Note that, if it satisfies Sylvester's criterion everywhere, we have that $N_r$ is strictly convex. 

The first two minors are positive if and only if $\phi^{-1/2}\frac{1}{r}-(2\phi)^{-1} \la \nabla_{\mathbb{R}^3} \phi,\tilde{\nu}\ra_{\mathbb{R}^3}>0$ or, equivalently, $\la \nabla_{\mathbb{R}^3} \phi,x\ra_{\mathbb{R}^3} +2\phi>0$. In a similar fashion to Lemma \ref{CylinderBarrierHypLemma}, we compute:
\begin{align*}
\la \nabla_{\mathbb{R}^3} \phi,x\ra_{\mathbb{R}^3} +2\phi & \geq \sum_{i=1}^k \frac{1}{2\av{x-p_i}^3_{\mathbb{R}^3}}\left(\av{x}_{\mathbb{R}^3}^2-3\av{x}_{\mathbb{R}^3} \av{p_i}_{\mathbb{R}^3}+2\av{p_i}^2_{\mathbb{R}^3}\right)\\
&=\sum_{i=1}^k \frac{1}{2\av{x-p_i}^3}\left(\av{x}_{\mathbb{R}^3}-2\av{p_i}_{\mathbb{R}^3}\right)\left(\av{x}_{\mathbb{R}^3}-\av{p_i}_{\mathbb{R}^3}\right).
\end{align*}
If $r=\av{x}_{\mathbb{R}^3}>2\max_i \av{p_i}_{\mathbb{R}^3}$ or $r<\min \{\av{p_i}_{\mathbb{R}^3}: \av{p_i}_{\mathbb{R}^3}>0\}$, then this sum is positive. We are left to prove that the determinant of the matrix is positive. An explicit computation shows that $\det (\Gemini^X_{\tilde{\nu}} )$  factors as the product of 
$$\phi^{-1/2}\frac{1}{r}-(2\phi)^{-1} \la \nabla_{\mathbb{R}^3} \phi,\tilde{\nu}\ra_{\mathbb{R}^3}$$
and of
$$\frac{1}{2\phi^2}\left( \frac{\la \nabla_{\mathbb{R}^3} \phi,\nu\ra_{\mathbb{R}^3}}{r}-(2\phi)^{-1} \av{\nabla_{\mathbb{R}^3} \phi}_{\mathbb{R}^3}^2\right),$$
where we used the properties of the cross product and the fact that $(u,v,\nu)$ forms an orthonormal basis of the tangent space of $\mathbb{R}^3$. Since the former is equal to the first minor, we just need to prove that the latter is positive or, equivalently, that 
\[
\av{\nabla_{\mathbb{R}^3} \phi}_{\mathbb{R}^3}^2+\frac{2\phi\la \nabla_{\mathbb{R}^3}\phi, x\ra_{\mathbb{R}^3}}{\av{x}_{\mathbb{R}^3}^2}<0.
\]
Explicitly, the two summands are:
\begin{align} \label{Sum1}
\av{\nabla_{\mathbb{R}^3} \phi}_{\mathbb{R}^3}^2=\frac{1}{4}\sum_{i,j=1}^k \frac{\la x-p_i,x-p_j\ra_{\mathbb{R}^3}}{\av{x-p_i}^3_{\mathbb{R}^3}{\av{x-p_j}}^3_{\mathbb{R}^3}}
\end{align}
and
\begin{align} 
\begin{split}\label{Sum2}
\frac{2\phi\la \nabla_{\mathbb{R}^3}\phi, x\ra_{\mathbb{R}^3}}{\av{x}_{\mathbb{R}^3}^2}&=\frac{1}{\av{x}_{\mathbb{R}^3}^2}\left(\sum_{i=1}^k \frac{1}{\av{x-p_i}_{\mathbb{R}^3}} +2m\right) \left(-\frac{1}{2} \sum_{i=1}^k \frac{\av{x}_{\mathbb{R}^3}^2-\la p_i,x\ra_{\mathbb{R}^3}}{\av{x-p_i}_{\mathbb{R}^3}^3} \right)\\
&\leq -\frac{1}{4\av{x}_{\mathbb{R}^3}^2} \sum_{i,j=1}^k \frac{\av{x}_{\mathbb{R}^3}^2-\la p_i,x\ra_{\mathbb{R}^3}}{\av{x-p_i}^3_{\mathbb{R}^3} \av{x-p_j}_{\mathbb{R}^3}}\\
 &\hspace{12pt}-\frac{1}{4\av{x}_{\mathbb{R}^3}^2} \sum_{i,j=1}^k \frac{\av{x}_{\mathbb{R}^3}^2-\la p_j,x\ra_{\mathbb{R}^3}}{\av{x-p_j}^3_{\mathbb{R}^3} \av{x-p_i}_{\mathbb{R}^3}},
\end{split}
\end{align}
where inequality holds if $\av{x}_{\mathbb{R}^3}\geq\av{p_i}_{\mathbb{R}^3}$ for all $i$. 

Summing Equation \ref{Sum1} and Equation \ref{Sum2}, we obtain: 
\begin{align*}
    \av{\nabla \phi}^2+\frac{2\phi\la \nabla\phi, x\ra}{\av{x}^2}\leq &\frac{1}{4\av{x}^2}\sum_{i,j=1}^k \bigg(\frac{\la x-p_i,x-p_j\ra\av{x}^2 -\av{x}^2\av{x-p_j}^2}{\av{x-p_j}^3\av{x-p_i}^3}\\
    &+\frac{\la p_i,x\ra \av{x-p_j}^2 -\av{x}^2\av{x-p_i}^2 +\la p_j,x\ra \av{x-p_i}^2 }{\av{x-p_j}^3\av{x-p_i}^3}\bigg).
\end{align*}
Let's denote by $(I)$ the numerator of such expression and by $A:=\max_i \av{p_i}$. We can write:
\begin{align*}
(I)&=-\av{x}^4+\la p_i,p_j\ra \av{x}^2 -4\la p_i,x\ra \la p_j,x\ra-\av{x}^2 (\av{p_j}^2+\av{p_i}^2)\\
&\hspace{13pt}+2\av{x}^2(\la x,p_j\ra+\la x,p_i\ra)+\av{p_j}^2 \la p_i,x\ra +\av{p_i}^2\la p_j,x\ra\\
&\leq -\av{x}^4 +\av{p_i}\av{p_j}\av{x}^2+4\av{p_i}\av{p_j}\av{x}^2-\av{x}^2\av{p_j}^2-\av{x}^2\av{p_i}^2\\
&\hspace{13pt} +2\av{x}^3\av{p_j}+ 2\av{x}^3\av{p_i} +\av{p_j}^2\av{p_i}\av{x}+\av{p_i}^2\av{p_j}\av{x}\\
&\leq -\av{x}^4 +5\av{p_i}\av{p_j}\av{x}^2+ 2\av{x}^3(\av{p_j}+\av{p_i})+\av{p_j}^2 \av{p_i}\av{x}+\av{p_i}^2\av{p_j}\av{x}\\
&\leq \av{x}\left( -\av{x}^3 + 4A\av{x}^2 +5A^2 \av{x} +2A^3\right), 
\end{align*}
where we only developed $(I)$, applied Cauchy-Schwarz and used the obvious estimate $\av{p_i}\leq A$. The first claim follows immediately. 

We will now deal with the second part of the statement. Without loss of generality, we can assume that $p_1=0$. Considering the expression of $\av{\nabla_{\mathbb{R}^3} \phi}_{\mathbb{R}^3}^2$, we can distinguish 3 different cases: 
\begin{enumerate}
    \item $i=j=1$;
    \item $i=1$ and $j\neq 1$;
    \item $i,j\neq1$.
\end{enumerate}

Under the assumption that $r$ is small enough, we can estimate all the terms in $(3)$ with a constant, all the terms in $(2)$ with a constant times $1/\av{x}^2$ and the term in $(1)$ with $1/(4\av{x}^4)$. Hence, we have: 
\begin{align*}
    \av{\nabla_{\mathbb{R}^3} \phi}_{\mathbb{R}^3}^2\leq \frac{1}{4\av{x}^4}+\frac{B_1}{\av{x}^2}+B_2.
\end{align*}
Analogously, we can estimate: 
\begin{align*}
    \frac{2\phi\la \nabla_{\mathbb{R}^3}\phi, x\ra_{\mathbb{R}^3}}{\av{x}_{\mathbb{R}^3}^2}\leq -\frac{1}{2\av{x}^4}+\frac{C_1}{\av{x}^3}+\frac{C_2}{\av{x}^2}.
\end{align*}
It is clear that, for $\av{x}$ small enough, the following holds everywhere: 
\[
\av{\nabla_{\mathbb{R}^3} \phi}_{\mathbb{R}^3}^2+\frac{2\phi\la \nabla_{\mathbb{R}^3}\phi, x\ra_{\mathbb{R}^3}}{\av{x}_{\mathbb{R}^3}^2}<0.
\]
Thus, the proof is complete.
\end{proof}

\begin{thm} \label{SphereBarrier}
Let $(X,g)$ be a multi-Eguchi--Hanson or a multi-Taub--NUT space. Then, compactly supported stationary integral varifolds need to be contained $\pi^{-1} (\{x\in U: \av{x}\leq C \max_i\av{p_i}_{\mathbb{R}^3}\})$, where $C\approx 5.07$ is the only real root of the polynomial: $-x^3+4x^2+5x+2$. Moreover, if $p_i=0$ for some $i$, then, there are no compactly supported stationary integral varifolds contained in $\pi^{-1} \left(\left\{x\in U: \av{x}< r_0\right\}\right)$, for some $r_0$ small enough. 
\end{thm}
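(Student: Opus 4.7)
The plan is to imitate the structure of the proof of Theorem \ref{SphereBarrierHyp}, but exploit the stronger convexity obtained in Lemma \ref{SphereBarrierLemma}. Namely, Lemma \ref{SphereBarrierLemma} tells us that the $S^1$-invariant spheres $N_r$ are strictly \emph{convex} (i.e.\ strictly $1$-convex) with respect to their interior for $r > C\max_i \av{p_i}_{\mathbb{R}^3}$, as well as for $r < r_0$ small enough when one of the $p_i$ is the origin. Since strict $1$-convexity implies strict $k$-convexity for every $k\geq 1$, the boundaries $N_r$ are simultaneously barriers for stationary integral varifolds of every dimension, not merely for $3$-varifolds as in Theorem \ref{SphereBarrierHyp}.

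For the first assertion, I would argue by contradiction: suppose there is a compactly supported stationary integral varifold $T$ whose support is not contained in $\pi^{-1}(\{x\in U:\av{x}_{\mathbb{R}^3}\leq C\max_i\av{p_i}_{\mathbb{R}^3}\})$. Since the support of $T$ is compact, the radii $\av{x}_{\mathbb{R}^3}$ attained on $\pi(\mathrm{supp}\,T)$ are bounded, so there exists a largest $r_*>C\max_i\av{p_i}_{\mathbb{R}^3}$ realized on $\mathrm{supp}\,T$. Then $\mathrm{supp}\,T$ lies inside the region bounded by $N_{r_*}$ and meets $N_{r_*}$ at some point. By Lemma \ref{SphereBarrierLemma}, $N_{r_*}$ is strictly convex (hence strictly $k$-convex for the relevant dimension $k$ of $T$), so Corollary \ref{BarrierGMT} yields a contradiction.

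For the second assertion, suppose that $p_i=0$ for some $i$ and that there is a compactly supported stationary integral varifold $T$ contained in $\pi^{-1}(\{\av{x}_{\mathbb{R}^3}<r_0\})$. Shrinking toward the origin along the family $N_r$ with $r<r_0$, the smallest radius $r_*\in(0,r_0)$ attained by $\pi(\mathrm{supp}\,T)$ exists by compactness. At such an $r_*$, the support of $T$ lies on the \emph{outside} of $N_{r_*}$ and meets $N_{r_*}$. By the second part of Lemma \ref{SphereBarrierLemma}, the sphere $N_{r_*}$ is strictly convex (with respect to the interior containing the origin), so reversing the orientation, $N_{r_*}$ is also strictly $k$-convex with respect to its exterior (which contains $\mathrm{supp}\,T$). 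Another application of Corollary \ref{BarrierGMT} produces the contradiction.

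The only mildly delicate point is the second assertion: strictly convex with respect to the interior does not in general give a barrier for objects on the outside, so one has to be careful about which normal and which domain the $k$-convexity statement refers to. The cleanest way is probably to treat $\pi^{-1}(\{r_*\leq \av{x}_{\mathbb{R}^3}\leq R\})$, for some large $R$ outside the region where Lemma \ref{SphereBarrierLemma} applies, as a smooth domain whose inner boundary component is $N_{r_*}$; with respect to the inward normal of this annular domain, $N_{r_*}$ has exactly the second fundamental form that was shown to be strictly positive in Lemma \ref{SphereBarrierLemma}. Thus Corollary \ref{BarrierGMT} applies directly. Apart from this bookkeeping, both parts are now immediate and the proof is essentially verbatim that of Theorem \ref{SphereBarrierHyp}.
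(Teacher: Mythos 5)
Your first assertion is argued exactly as in the paper: take the largest radius $r_*$ attained by $\pi(\mathrm{supp}\,T)$, note that strict $1$-convexity of $N_{r_*}$ (Lemma \ref{SphereBarrierLemma}) implies strict $k$-convexity for every $k$, and invoke Corollary \ref{BarrierGMT}. That part is fine.

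The second assertion, however, contains a genuine error. You take the \emph{smallest} radius $r_*$ attained by the support, place the varifold outside $N_{r_*}$, and claim that ``reversing the orientation, $N_{r_*}$ is also strictly $k$-convex with respect to its exterior.'' This is false: replacing the normal $\nu$ by $-\nu$ changes the sign of the second fundamental form, so a sphere that is strictly convex with respect to its interior is strictly \emph{concave} with respect to its exterior. Your proposed repair via the annular domain $\pi^{-1}(\{r_*\leq\av{x}_{\mathbb{R}^3}\leq R\})$ fails for the same reason: the inward normal of that annulus along its inner boundary component points away from the origin, i.e.\ it is $-\nu$ in the notation of Lemma \ref{SphereBarrierLemma}, so the relevant second fundamental form is the \emph{negative} of the one shown positive there, not ``exactly'' that form as you assert. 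The correct argument --- and the one the paper intends by ``a similar argument works'' --- runs in the same direction as the first part: if $T$ is contained in $\pi^{-1}(\{\av{x}_{\mathbb{R}^3}<r_0\})$, let $r_*$ be the \emph{largest} radius attained by $\pi(\mathrm{supp}\,T)$; since $0<r_*<r_0$, Lemma \ref{SphereBarrierLemma} makes $N_{r_*}$ strictly convex with respect to its interior, which contains $\mathrm{supp}\,T$ (including the point of $X$ over the singular point $p_i=0$), and $\mathrm{supp}\,T$ touches $N_{r_*}$, contradicting Corollary \ref{BarrierGMT}. No orientation reversal is needed anywhere, and with this fix your proof coincides with the paper's.
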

\begin{proof}
It follows as in Theorem \ref{SphereBarrierHyp}, substituting Lemma \ref{SphereBarrierHypLemma} with Lemma \ref{SphereBarrierLemma}. 
\end{proof}

Once again, note that Theorem \ref{IntroHigherCodimensionSpheres} is the smooth special case of Theorem \ref{SphereBarrier}. Moreover, we can consider Theorem \ref{SphereBarrier} as a generalization of the following classical result.
\begin{corollary} \label{ClassicalCorollaryR^4Taub-NUT}
There are no compact minimal submanifolds (compactly supported stationary integral varifolds) in the Euclidean $\mathbb{R}^4$ and in the Taub--NUT space. 
\end{corollary}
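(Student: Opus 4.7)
The plan is to read this as a direct specialisation of Theorem \ref{SphereBarrier} to the single-centre case. In both of these spaces the harmonic function $\phi$ has exactly one singular point, located at the origin, so $p_1 = 0$ is the unique centre and $\max_i |p_i|_{\mathbb{R}^3} = 0$.

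Applying the first conclusion of Theorem \ref{SphereBarrier}, any compactly supported stationary integral varifold must be contained in $\pi^{-1}(\{x \in U : |x|_{\mathbb{R}^3} \leq C \cdot 0\}) = \pi^{-1}(\emptyset) = \emptyset$, since $0 = p_1 \notin U$. If one were worried that the smooth completion of $X$ adds back the single point $\pi^{-1}(p_1)$, one observes that a positive-dimensional stationary integral varifold cannot be supported on an isolated point, so this cannot yield an honest counterexample either.

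As a more robust second step (and to cover the extended point cleanly), one can invoke the second part of Theorem \ref{SphereBarrier}, whose hypothesis is precisely that $p_i = 0$ for some $i$: there exists $r_0 > 0$ such that no compactly supported stationary integral varifold is contained in $\pi^{-1}(\{x \in U : |x|_{\mathbb{R}^3} < r_0\})$. Combined with the containment from the first part, this immediately rules out the existence of any such varifold.

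There is essentially no obstacle to overcome in this corollary; it is the one-point case of Theorem \ref{SphereBarrier}, and the only mild subtlety is making sure that the extension of $X$ at $p_1$ does not sneak in a degenerate varifold, which is settled either by the dimension argument above or, more robustly, by the second part of Theorem \ref{SphereBarrier}. The result then recovers the classical non-existence of compact minimal submanifolds in $\mathbb{R}^4$ and the Taub--NUT space, now extended to the stationary integral varifold setting.
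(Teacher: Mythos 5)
Your proposal is correct and follows exactly the paper's intended route: the corollary is stated as the immediate one-point specialization of Theorem \ref{SphereBarrier} (with $p_1=0$, so $\max_i\av{p_i}_{\mathbb{R}^3}=0$), and the paper offers no further proof. Your extra care about the completion point $\pi^{-1}(0)$ --- noting that a positive-dimensional stationary integral varifold cannot be supported at a single point, and that the second part of Theorem \ref{SphereBarrier} applies since $p_i=0$ --- is a sound filling-in of a detail the paper leaves implicit.
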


\begin{remark}
Differently from the codimension 1 case, we observe that it is not possible to carry out a similar argument with cylinders and planes. Indeed, cylinders correspond to hypersurfaces that are nowhere convex. The reason is that cylinders in $\mathbb{R}^3$ have one vanishing principal curvature. Hence, using Lemma \ref{SFF S^1 invariant} with the principal directions as a basis, it is straightforward to verify that an element of the diagonal of the second fundamental form is less than or equal to zero. Obviously, Sylvester's criterion cannot hold. Analogously, the same argument works for planes.

Moreover, if the points are collinear, cylinders with axis containing the singular points of $\phi$ correspond to hypersurfaces that are nowhere $2$-convex. Indeed, in the same setting as above, the second fundamental form is simple enough that it is possible to explicitly compute its eigenvalues. It is easy to see that the sum of two of them is always less than zero. Now, consider a plane orthogonal to the line containing the singular points of $\phi$. It is easy to see that, if all the points are contained in one of its half-spaces, the corresponding matrix at the point of intersection with the line is diagonal. Since the sum of the smallest entries is zero, we conclude that it cannot be strictly two-convex.

In particular, we have shown that, even for weaker constants, Theorem \ref{CylinderBarrierHyp} and Theorem \ref{PlaneBarrierHyp} cannot hold in higher codimension.
\end{remark}

Finally, we generalize Theorem \ref{TheoremnonexistenceEH} to the two-centred multi-Taub--NUT space.

\begin{lemma} \label{EllipsoidBarrierLemma}
Let $(X,g)$ be a multi-Eguchi--Hanson or a multi-Taub--NUT space with two singular points of $\phi$, which, without loss of generality,  we can assume to be $p_{\pm}:=(0,0,\pm a)$, and let $N_r$ be the $S^1$-invariant hypersurface corresponding to the Euclidean ellipsoid $\Sigma_r=\{x\in U: \av{x-p_+}_{\mathbb{R}^3}+\av{x-p_-}_{\mathbb{R}^3}=2a\cosh r\}\subset U$, i.e. $\pi(N_r)=\Sigma_r$ for some $r\in\mathbb{R}^{+}$. Then, $N_r$ is strictly convex with respect to the interior of the ellipsoid for all $r>0$.
\end{lemma}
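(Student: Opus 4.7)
The plan is to verify Sylvester's criterion for the $3\times 3$ matrix of $\Gemini^X_{\tilde\nu}$ given by Lemma \ref{SFF S^1 invariant}, in direct analogy with Lemma \ref{SphereBarrierLemma}. I would exploit the axial symmetry of the configuration: both foci $p_\pm=(0,0,\pm a)$ lie on the $z$-axis and $\Sigma_r$ is rotationally invariant around it. Accordingly, I choose the Euclidean orthonormal frame $(u,v)$ on $\Sigma_r$ with $v$ tangent to the rotation circle through $x$ and $u$ in the meridional plane containing $x$ and the $z$-axis. These are principal directions of the bifocal ellipsoid with strictly positive principal curvatures $\kappa_u,\kappa_v$, whose expressions in prolate spheroidal coordinates are classical. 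Since $\nabla_{\R^3}\phi$ lies entirely in the meridional plane (both $\hat r_\pm := (x-p_\pm)/\av{x-p_\pm}_{\R^3}$ do), the vector $u\times\nabla_{\R^3}\phi$ is parallel to $v$ and hence orthogonal to $\tilde\nu$; combined with $\Gemini^{\R^3}_\nu(u,v)=0$, this makes the matrix of $\Gemini^X_{\tilde\nu}$ in the basis $(\tilde u, \tilde v, e_0)$ block-diagonal, with a scalar block $A$ in $\tilde u$ and a $2\times 2$ block in $(\tilde v, e_0)$.

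Setting $r_\pm := \av{x-p_\pm}_{\R^3}$, the inward unit normal is $\nu = -(\hat r_+ + \hat r_-)/\av{\hat r_+ + \hat r_-}_{\R^3}$ and a direct computation gives
\[ \la \nabla_{\R^3}\phi, \nu\ra_{\R^3} = \tfrac{1}{4}\av{\hat r_+ + \hat r_-}_{\R^3}\bigl(r_+^{-2}+r_-^{-2}\bigr),\qquad 2\phi = r_+^{-1}+r_-^{-1}+2m, \]
together with $\la v\times \nabla_{\R^3}\phi, \nu\ra_{\R^3} = -\la \nabla_{\R^3}\phi, u\ra_{\R^3}$, since $(u,v,\nu)$ is positively oriented and $\nabla_{\R^3}\phi\in\mathrm{span}(u,\nu)$. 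Sylvester's criterion then reduces to three inequalities: positivity of $A$ and of $B:=\Gemini^X_{\tilde\nu}(\tilde v, \tilde v)$, both of which read $\kappa_\ast > \la\nabla_{\R^3}\phi,\nu\ra_{\R^3}/(2\phi)$, and positivity of the determinant $BC - D^2$, where $C:=\Gemini^X_{\tilde\nu}(e_0, e_0)$ and $D:=\Gemini^X_{\tilde\nu}(e_0, \tilde v)$. The first two inequalities should follow from the explicit principal curvatures of the bifocal ellipsoid combined with the bound $\av{\hat r_+ + \hat r_-}_{\R^3}<2$ (both foci are strictly interior to $\Sigma_r$), and the extra $2m$ in $2\phi$ only strengthens them.

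The delicate step, and the main obstacle, is the determinantal inequality $BC > D^2$: this is precisely the place where the bifocal structure is essential and which must fail once a third singular point of $\phi$ is added. My plan is to clear denominators and use the fact that $\nabla_{\R^3}\phi$ has no $v$-component, so that $\av{\nabla_{\R^3}\phi}^2_{\R^3} = \la\nabla_{\R^3}\phi,\nu\ra^2_{\R^3} + \la\nabla_{\R^3}\phi,u\ra^2_{\R^3}$, and thereby rewrite the condition in the clean form
\[ 4\phi^3(BC - D^2) = 2\phi\,\kappa_v\la \nabla_{\R^3}\phi,\nu\ra_{\R^3} - \av{\nabla_{\R^3}\phi}^2_{\R^3} > 0. \]
Substituting the explicit expressions reduces this to a one-variable algebraic inequality in prolate spheroidal coordinates constrained by $r_+ + r_- = 2a\cosh r$, which can be checked directly; the monopole contribution $m\geq 0$ enters only through $2\phi$ and through $\kappa_v$ (via the conformal change of metric) in a way that preserves the sign. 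Finally, the two poles of $\Sigma_r$, where the frame $(u,v)$ degenerates, are handled by continuity together with the fact that rotational symmetry forces $\Gemini^X_{\tilde\nu}$ to be a scalar multiple of the identity on the horizontal tangent plane there.
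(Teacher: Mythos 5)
Your skeleton is the paper's own: Lemma \ref{SFF S^1 invariant} plus rotational symmetry gives the same block-diagonal matrix (the azimuthal tangent direction couples only to $e_0$, the meridional direction decouples), Sylvester's criterion yields the same three inequalities --- your $A>0$, $B>0$, $BC-D^2>0$ are the paper's (\ref{E2}), (\ref{E1}), (\ref{E4}) up to swapping the labels of $u$ and $v$ --- and the reduction of $m>0$ to $m=0$ by monotonicity is exactly how the paper argues, writing $\phi=m+\tilde\phi$ and $\nabla\phi=\nabla\tilde\phi$. (One small slip there: $m$ does \emph{not} enter through $\kappa_v$, since the principal curvatures in your inequalities are the Euclidean ones of $\Sigma_r$ and are $m$-independent; it enters only through $\phi$, which is precisely why the sign can only improve.) Your identity $4\phi^3(BC-D^2)=2\phi\,\kappa_v\la \nabla_{\R^3}\phi,\nu\ra_{\R^3}-\av{\nabla_{\R^3}\phi}^2_{\R^3}$, valid because $\nabla_{\R^3}\phi$ has no azimuthal component, is a correct repackaging of the third minor, and the direct prolate-spheroidal check it calls for does succeed: the paper carries it out and obtains a manifestly positive expression.

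The genuine gap is in your treatment of the two diagonal entries: the crude bound $\av{\hat{r}_+ + \hat{r}_-}_{\R^3}<2$ is too lossy to give the meridional inequality for \emph{all} $r>0$. On $\Sigma_r$, with $r_\pm=a(\cosh r\mp\cos\beta)$ and $A^2=\cosh^2 r-\cos^2\beta$, the exact value is $\av{\hat{r}_+ + \hat{r}_-}_{\R^3}=2\sinh r/A\leq 2$, and the factor $\sinh r/A$ is precisely what saves small ellipsoids. Concretely, at the equator ($\beta=\pi/2$, $r_+=r_-=a\cosh r$, $m=0$) the meridional condition (positivity of your scalar block) reads $2\phi\,\kappa_{\mathrm{mer}}=2\sinh r/(a^2\cosh^3 r)>\la\nabla\phi,\nu\ra$, while your bound only gives $\la\nabla\phi,\nu\ra\leq\tfrac12(r_+^{-2}+r_-^{-2})=1/(a^2\cosh^2 r)$; so your argument would need $2\tanh r>1$ and fails whenever $\tanh r\leq 1/2$ --- exactly the ellipsoids collapsing onto the minimal surface, which are the ones the barrier argument of Theorem \ref{Theoremnnonexistence2P} cannot do without. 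The repair is to keep the exact factor (equivalently, compute as the paper does), after which the meridional minor equals $\sinh r/(a^2A^3)>0$ identically; but as written, your justification of the first two inequalities does not cover all $r>0$.
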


\begin{proof}
Given the parametrization of $\Sigma_r$, $r>0$: 
\[
\systeme*{x_1=&a\sinh r \sin \beta\cos \alpha, x_2=&a\sinh r \sin \beta \sin \alpha, x_3=&a\cosh r \cos \beta}
\]
for $\alpha\in[0,2\pi)$ and $\beta\in[0,\pi]$, we observe that $u:=\partial_\alpha/\av{\partial \alpha}$ and $v:=\partial_\beta/\av{\partial_\beta}$ form an orthonormal basis for $\Sigma_r$ and $\nu:=u\times v$ is the inward pointing unit normal. Moreover, we have: 
\begin{align*}
&\Gemini^{\mathbb{R}^3}_\nu (u,u)=\frac{\cosh r}{aA \sinh r}; \hspace{55pt} \Gemini^{\mathbb{R}^3}_\nu (u,v)\equiv 0;\\
&\Gemini^{\mathbb{R}^3}_\nu (v,v)=\frac{\sinh r\cosh r}{aA^3}; \hspace{42pt} \la \nabla \phi, u\ra_{\mathbb{R}^3}\equiv 0;\\
&\la \nabla \phi, v\ra_{\mathbb{R}^3}=\sum_{\pm} \frac{\mp\sin\beta}{2\av{x-p_{\pm}}^2_{\mathbb{R}^3}A};\hspace{20pt} \la \nabla_{\mathbb{R}^3} {\phi},\nu\ra_{\mathbb{R}^3}=\sum_{\pm} \frac{\sinh r}{2\av{x-p_{\pm}}^2_{\mathbb{R}^3}A},
\end{align*}
where $A^2=(\cosh r-\cos \beta)(\cosh r+\cos\beta)$.

By lemma \ref{SFF S^1 invariant}, in the basis $(\tilde{\phi}^{-1/2} u, \tilde{\phi}^{-1/2} v, \tilde{\phi}^{1/2}\xi)$, the matrix representing the second fundamental form of $N_r$ with respect to $\tilde{\nu}:=\phi^{-1/2}\nu$ is:
$$
\setlength{\arraycolsep}{2pt}
  \renewcommand{\arraystretch}{0.8}
\small\frac{1}{2{\phi}^{3/2}}\hspace{-5pt}\begin{bmatrix}
2{\phi} \Gemini^{\mathbb{R}^3}_\nu (u,u)- \la \nabla_{\mathbb{R}^3} {\phi},\nu\ra_{\mathbb{R}^3} & 0 &  -\la u \times \nabla_{\R^3} {\phi}, \nu\ra_{\mathbb{R}^3}\\
0 & 2{\phi} \Gemini^{\mathbb{R}^3}_{\nu}(v,v)- \la \nabla_{\mathbb{R}^3} {\phi},\nu\ra_{\mathbb{R}^3} & 0\\
- \la u \times \nabla_{\mathbb{R}^3} {\phi}, \nu\ra_{\mathbb{R}^3} &0 & \la \nabla_{\mathbb{R}^3} {\phi},\nu\ra_{\mathbb{R}^3}
\end{bmatrix}.
$$
In particular, it is positive definite, and hence $N_r$ is strictly convex, if and only if we have the following inequalities:
\begin{align} 
&2{\phi} \Gemini^{\mathbb{R}^3}_\nu (u,u)- \la \nabla_{\mathbb{R}^3} {\phi},\nu\ra_{\mathbb{R}^3}>0;\label{E1} \\
&2{\phi} \Gemini^{\mathbb{R}^3}_\nu (v,v)- \la \nabla_{\mathbb{R}^3} {\phi},\nu\ra_{\mathbb{R}^3}>0; \label{E2} \\ 
&(2{\phi} \Gemini^{\mathbb{R}^3}_\nu (u,u)- \la \nabla_{\mathbb{R}^3} {\phi},\nu\ra_{\mathbb{R}^3})\la \nabla_{\mathbb{R}^3} {\phi},\nu\ra_{\mathbb{R}^3}-\la \nabla_{\mathbb{R}^3} {\phi},v\ra^2_{\mathbb{R}^3}>0. \label{E4}
\end{align}

Let's first prove the case $m=0$. Explicitly, it is easy to compute: 
\begin{align*}
(\ref{E1})&=\sum_{\pm} \frac{\cosh^2 r\mp 2\cosh r\cos \beta+1}{2A\sinh r \av{x-p_{\pm}}^2_{\mathbb{R}^3}};\\
(\ref{E2})&=\sum_{\pm} \frac{\sinh r}{2a^2A^3};\\
(\ref{E4})&=\left( \sum_{\pm} \frac{1}{\av{x-p_{\pm}}^2_{\mathbb{R}^3}}\right) \left(\sum_{\pm} \frac{(\cosh r\mp \cos \beta)^2}{4A^2 \av{x-p_{\pm}}^2_{\mathbb{R}^3}} \right),
\end{align*}
which are clearly positive.

Now, we consider the case $m>0$ and we write $\phi=m+\tilde{\phi}$. The first minor of the second fundamental form is positive if and only if $2m \Gemini^{\mathbb{R}^3}_\nu (u,u)+(2\tilde{\phi} \Gemini^{\mathbb{R}^3}_\nu (u,u)- \la \nabla_{\mathbb{R}^3} \phi,\nu\ra_{\mathbb{R}^3})>0$. The first term is clearly greater than zero as  $\Gemini^{\mathbb{R}^3}_\nu (u,u)$ and $m$ are. The positivity of the remaining part follows from (\ref{E1}) in the $m=0$ case and $\nabla\phi=\nabla\tilde{\phi}$. Analogously, using ($\ref{E2}$) with $m=0$, we can prove that the second minor is positive.  

The determinant is greater than zero if and only if 
\begin{align*}
&2m\Gemini^{\mathbb{R}^3}_\nu (u,u)\la \nabla_{\mathbb{R}^3} {\phi},\nu\ra_{\mathbb{R}^3}+\\
 &+\left( (2\tilde{\phi} \Gemini^{\mathbb{R}^3}_\nu (u,u)- \la \nabla_{\mathbb{R}^3} \phi,\nu\ra_{\mathbb{R}^3})\la \nabla_{\mathbb{R}^3} \phi,\nu\ra_{\mathbb{R}^3}-\la \nabla_{\mathbb{R}^3} \phi,v\ra^2_{\mathbb{R}^3}\right)>0.
\end{align*}
This is the case because of ($\ref{E4}$) in the $m=0$ case and $\nabla\phi=\nabla \tilde{\phi}$.

We conclude that $N_r$ is strictly convex for all $r>0$ and all $m\geq0$.
\end{proof}

\begin{remark}
We observe that this proof, in the case $m=0$, is conceptually equivalent to Lemma \ref{EHconvexfunction}. Indeed, as observed in Example \ref{Level set example}, the ellipsoids are the level sets of the square of the distance function from the circle invariant compact minimal surface in the Eguchi--Hanson space. Lemma \ref{EHconvexfunction}, together with Remark \ref{LevelSetsConvexFunctions}, implies that they need to be strictly convex.
\end{remark}

\begin{thm}\label{Theoremnnonexistence2P}
Let $(X,g)$ be a multi-Eguchi--Hanson or a multi-Taub--NUT space with two singular points of $\phi$. Then, compactly supported stationary integral varifolds are contained in the unique $S^1$-invariant compact minimal surface.
\end{thm}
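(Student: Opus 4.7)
The plan is to foliate $X \setminus N_0$ by the strictly convex hypersurfaces $N_r$ of Lemma \ref{EllipsoidBarrierLemma}, where $N_0 := \pi^{-1}(\overline{p_- p_+})$ is the unique $S^1$-invariant compact minimal surface (cf.\ Example \ref{Level set example}). As $r$ ranges over $(0,\infty)$, the Euclidean ellipsoids $\Sigma_r$ sweep out $\R^3 \setminus \overline{p_- p_+}$, collapsing to the segment $\overline{p_- p_+}$ as $r \to 0^+$ and going out to infinity as $r \to \infty$. Lifting via $\pi^{-1}$, the family $\{N_r\}_{r > 0}$ foliates $X \setminus N_0$ and degenerates onto $N_0$ as $r \to 0^+$.

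Let $V$ be a compactly supported stationary integral $k$-varifold in $X$. By Lemma \ref{EllipsoidBarrierLemma}, $N_r$ is strictly convex inward for every $r > 0$, hence strictly $k$-convex inward for every $k \geq 1$ by Lemma \ref{Linear Algebra translation}. Compactness of $\mathrm{supp}(V)$ guarantees that $\mathrm{supp}(V)$ lies inside the interior region bounded by $N_r$ for all sufficiently large $r$. Define
\[
r_0 := \inf\left\{ r > 0 : \mathrm{supp}(V) \subset \overline{\mathrm{int}(N_r)}\right\}.
\]
The argument is then an extremal principle: if $r_0 > 0$, continuity of the defining function $\av{x-p_+}+\av{x-p_-}$ and closedness of $\mathrm{supp}(V)$ yield $\mathrm{supp}(V) \subset \overline{\mathrm{int}(N_{r_0})}$, while the definition of infimum forces $\mathrm{supp}(V) \cap N_{r_0} \neq \emptyset$. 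This is exactly the configuration ruled out by Corollary \ref{BarrierGMT} applied to the strictly $k$-convex barrier $N_{r_0}$, a contradiction. Therefore $r_0 = 0$, and collapsing of the $\Sigma_r$ onto $\overline{p_- p_+}$ gives $\mathrm{supp}(V) \subset N_0$.

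The only mild subtlety is that one must verify the infimum is achieved (i.e.\ that $\mathrm{supp}(V)$ actually touches $N_{r_0}$ rather than merely accumulating at it), but this is immediate from compactness of $\mathrm{supp}(V)$ together with the fact that the $\Sigma_r$ are level sets of a continuous exhaustion function of $\R^3 \setminus \overline{p_- p_+}$. The genuine work was performed in Lemma \ref{EllipsoidBarrierLemma}, where strict convexity of every ellipsoidal barrier was verified through the delicate positivity analysis of the Sylvester minors; once that ingredient is in hand, Theorem \ref{Theoremnnonexistence2P} reduces to the foliation/extremal argument outlined above.
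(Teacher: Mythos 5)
Your proposal is correct and takes essentially the same approach as the paper: the paper's proof of Theorem \ref{Theoremnnonexistence2P} simply runs the contradiction argument of Theorem \ref{SphereBarrierHyp} with Lemma \ref{SphereBarrierHypLemma} replaced by Lemma \ref{EllipsoidBarrierLemma}, i.e.\ exactly your extremal-ellipsoid argument feeding the strictly convex barrier $N_{r_0}$ into Corollary \ref{BarrierGMT}. You merely spell out the foliation and infimum details that the paper leaves implicit.
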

\begin{proof}
The proof follows as in Theorem \ref{SphereBarrierHyp}, where we use Lemma \ref{EllipsoidBarrierLemma} instead of Lemma \ref{SphereBarrierHypLemma}. 
\end{proof}
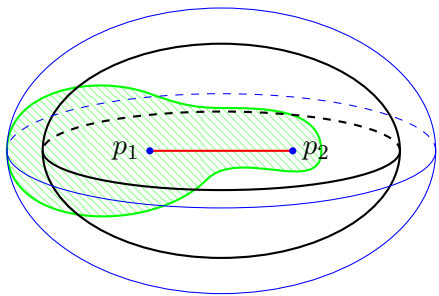
\begin{figure}[t]
\begin{tikzpicture}[scale=0.95]
\draw [-,thick, green, pattern=north west lines, pattern color=green!40] (-3,0) to [out=90,in=160] (-1,0.8)
to [out=-20,in=180] (0,0.6) to [out=0, in=90] (1.4,0)
to [out=-90, in=45] (-0.2,-0.4) to [out=225, in=-90] (-3,0);

\draw[red, thick] (-1,0,0)--(1,0,0);
\fill[blue] (-1,0,0) circle (0.05cm) node[black,left] {$p_1$};
\fill[blue] (1,0,0) circle (0.05cm) node[black,right] {$p_2$};

\draw[black, thick] (0,0,0) ellipse (2.5cm and 1.5cm);
\draw[black, thick, dashed] (2.5,0,0) arc(0:180:2.5cm and 0.55cm);
\draw[black, thick] (-2.5,0,0) arc(180:360:2.5cm and 0.55cm);

\draw[blue] (0,0) ellipse (3cm and 2cm);
\draw[blue, dashed] (3,0,0) arc(0:180:3cm and 0.8cm);
\draw[blue] (-3,0,0) arc(180:360:3cm and 0.8cm);
\end{tikzpicture}
\caption{Barriers used in Theorem \ref{Theoremnnonexistence2P}.}
\end{figure}

The last result is the geometric measure theory generalization of Theorem \ref{IntroTheoremTwoPoints}.

Putting together Theorem \ref{TheoremnonexistenceEH}, Corollary \ref{ClassicalCorollaryR^4Taub-NUT} and Theorem \ref{Theoremnnonexistence2P}, we have:
\begin{corollary}
In the multi-Eguchi--Hanson and multi-Taub--NUT spaces with at most two singular points of $\phi$, compact minimal submanifolds (compactly supported stationary integral varifolds) are $S^1$-invariant, or are contained in one.
\end{corollary}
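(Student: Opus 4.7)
The statement is an immediate combination of the three results cited, so the plan is just to split into cases according to the number $k$ of singular points of $\phi$ and apply the appropriate theorem in each case.

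First I would handle $k=1$. In this case the multi-Eguchi--Hanson space reduces to Euclidean $\mathbb{R}^4$ and the multi-Taub--NUT space reduces to the Taub--NUT space (by the very definitions given in the Examples subsection). Corollary \ref{ClassicalCorollaryR^4Taub-NUT} then says there are no compactly supported stationary integral varifolds in either, so the statement holds vacuously.

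Next I would handle $k=2$. Here Theorem \ref{Theoremnnonexistence2P} directly applies and gives that every compactly supported stationary integral varifold is contained in the unique $S^1$-invariant compact minimal surface predicted by Lemma \ref{S1invariantsurfaces} and Example \ref{Level set example}. Since that surface is itself $S^1$-invariant, each such varifold is "contained in an $S^1$-invariant compact minimal surface" in the sense of the statement, which in particular covers both the case of being $S^1$-invariant (e.g.\ the surface itself, or its image under a subgroup) and the case of being a proper subvariety of it.

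I would then conclude by observing that the smooth assertion about compact minimal submanifolds is the special case of the varifold assertion, so no separate argument is needed. The main (in fact only) subtlety is a bookkeeping one: one should point out that "at most two singular points" excludes $k=0$ by our standing hypothesis that the harmonic function $\phi$ is built from at least one pole (otherwise $X$ would be either empty or, in the pure multi-Taub--NUT case with no poles, isometric to $\mathbb{R}^3\times S^1$, which is outside the class considered). There is no real obstacle, as the proof is simply a collation of previously established facts.
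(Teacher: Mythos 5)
Your proof is correct and matches the paper's, which likewise just collates the prior results (Theorem \ref{TheoremnonexistenceEH}, Corollary \ref{ClassicalCorollaryR^4Taub-NUT} and Theorem \ref{Theoremnnonexistence2P}) with the obvious case split on the number of singular points. The only cosmetic difference is that the paper cites Theorem \ref{TheoremnonexistenceEH} separately for the Eguchi--Hanson case, whereas you subsume it under Theorem \ref{Theoremnnonexistence2P}; this is legitimate, since that theorem's statement (via Lemma \ref{EllipsoidBarrierLemma}, which allows $m\geq 0$) already covers the two-point multi-Eguchi--Hanson space.
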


\begin{remark}
Observe that we are not claiming that all compact minimal submanifolds are circle-invariant. Indeed, as the circle-invariant compact minimal submanifold of the Eguchi--Hanson space (multi-Taub--NUT space with two singular points of $\phi$) is totally geodesic \cite[Lemma 4.2]{LO20}, the closed non-equivariant geodesics of it are also closed geodesics in the total space. By the theorem of the three geodesics there are at least 2 of such objects.
\end{remark}

\begin{remark}
It is easy to see that the results in this subsection are still true for multi-centred Gibbons--Hawking spaces.
\end{remark}

\begin{remark}
In the Euclidean space and in the Taub--NUT space we showed that spheres centred at the origin are strictly convex. Moreover, in the Eguchi--Hanson space and in the two-centred multi-Taub--NUT space, we showed that ellipsoids with foci the singular points of $\phi$ are strictly convex. Since spheres can be considered 1-focus ellipsoids, one would expect k-foci ellipsoids to be strictly convex in the multi-Eguchi--Hanson and multi-Taub--NUT spaces with singular set of $\phi$ corresponding to the foci.

Unfortunately, this cannot hold even in the three point case. Indeed, 3-ellipsoids form a family of (possibly singular when passing through the foci) surfaces that foliates the space and shrinks to a point (see figure \ref{3-ellipsoids}). Clearly, if the surfaces were convex at all non-singular points of $\phi$, we could only have 1 circle-invariant closed geodesic contradicting Proposition \ref{Circle-invariant geodesics}. Moreover, even if the 3-ellipsoids were 2-convex at all non-singular points of $\phi$, this wouldn't be enough to prove that compact minimal surfaces need to be circle invariant.
\end{remark}

\begin{figure}[t]
\centering
\begin{subfigure}{.5\textwidth}
  \centering
  \begin{tikzpicture} [scale=1.3]
\draw[thick, red] (-1,0)--(1,0);
\draw[thick, red] (-1,0)--(0,1.73205);
\draw[thick, red] (1,0)--(0,1.73205);
\draw [-,thick, green] (-1,0) to [out=90, in=205] (0,1.73205);%to [out=-15, in=90] (1,0);
\draw [-,thick, green, rotate around={120:(0,0.5775)}] (-1,0) to [out=90, in=205] (0,1.73205);
\draw [-,thick, green, rotate around={-120:(0,0.5775)}] (-1,0) to [out=90, in=205] (0,1.73205);
\draw [-,thick, green] (-1.5,0) to [out=110, in=180] (0,2.2) to [out=0, in=70] (1.5,0);
\draw [-,thick, green]  (-1.5,0) to [out=-70, in=-110] (1.5,0);
\draw [-,thick, green, scale=0.2] (-1.5,2.5) to [out=110, in=180] (0,4.7) to [out=0, in=70] (1.5,2.5);
\draw [-,thick, green, scale=0.2]  (-1.5,2.5) to [out=-70, in=-110] (1.5,2.5);
\fill[blue] (-1,0) circle (0.05cm) node[black,left] {$p_1$};
\fill[blue] (1,0) circle (0.05cm) node[black,right] {$p_2$};
\fill[blue] (0,1.73205) circle (0.05cm) node[black,left] {$p_3$};
\end{tikzpicture}
  \end{subfigure}%
\begin{subfigure}{.5\textwidth}
  \centering
  \begin{tikzpicture}[scale=0.7]

\draw[thick, red] (-1,0)--(1,0);
\draw[thick, red] (-1,0)--(0,4);
\draw[thick, red] (1,0)--(0,4);

\draw [-,thick, green] (-1,0) to [out=110, in=180] (0,2) to [out=0, in=70] (1,0);
\draw [-,thick, green, rotate around={120:(0,0.5775)}] (-1,0) to [out=90, in=205] (0,1.73205);

\draw [-,thick, green, scale=0.2] (-1.5,2) to [out=120, in=180] (0,3.7) to [out=0, in=60] (1.5,2);
\draw [-,thick, green, scale=0.2]  (-1.5,2) to [out=-70, in=-110] (1.5,2);

\draw [-,thick, green] (-1.9,-0.4) to [out=120, in=220] (0,4) to [out=-40, in=60] (1.9,-0.4) to [out=-120 ,in=-60] (-1.9,-0.4);

\fill[blue] (-1,0) circle (0.08cm) node[black,left] {$p_1$};
\fill[blue] (1,0) circle (0.08cm) node[black,right] {$p_2$};
\fill[blue] (0,4) circle (0.08cm) node[black,left] {$p_3$};
\end{tikzpicture}
  
  \end{subfigure}
    \caption{Examples of 3-ellipsoids in the plane containing the foci.}
    \label{3-ellipsoids}
\end{figure}
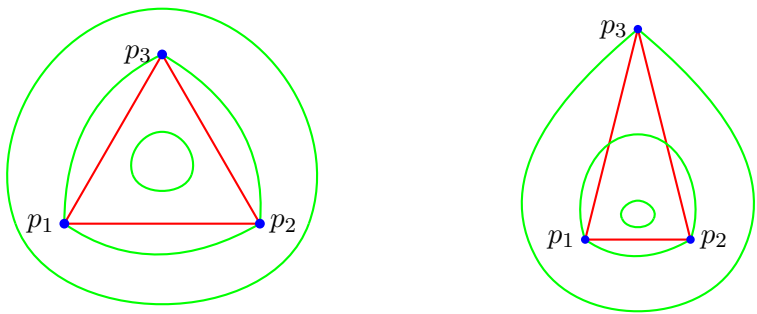

\subsection{Local barriers} In Section \ref{SectionBarrier}, we discussed the connection between strong stability and the convexity of the square of the distance function. We also showed that, in the multi-Eguchi--Hanson and in the multi-Taub--NUT spaces, the only strongly stable compact minimal submanifolds are, essentially, the circle-invariant compact minimal surfaces. In this setting, by Proposition \ref{Strong Stability in HK}, the strong stability condition is completely encoded by the Gaussian curvature of the surface. 

Lotay and Oliveira computed the Gaussian curvature of a circle-invariant compact minimal surface and obtained the following result.

\begin{lemma}[Lotay and Oliveira{\cite[Appendix A]{LO20}}] \label{GaussianCurvature}
Let $(X,g)$ be a multi-Eguchi--Hanson or a multi-Taub--NUT space, let $N$ be a compact $S^1$-invariant minimal surface in $(X,g)$ and let $\gamma:=\pi(N)$ be the associated straight line in $U$ connecting two singular points of $\phi$. Without loss of generality, we can assume that $\gamma$ is the straight line connecting $p_{\pm}:=(0,0,\pm a)$. Then, the Gaussian curvature of $N$ is given by:
\[
K=-\frac{\partial^2}{\partial x_3^2} \left(\frac{1}{2\phi}\right).
\]
Moreover, if we write
$$
\phi=m+\sum_{i=3}^{k} \frac{1}{2\av{x-p_i}_{\mathbb{R}^3}}+\frac{1}{2\av{x-p_+}_{\mathbb{R}^3}}+\frac{1}{2\av{x-p_-}_{\mathbb{R}^3}},
$$
and define
$$
\tilde{\phi}:=m+\sum_{i=3}^{k} \frac{1}{2\av{x-p_i}_{\mathbb{R}^3}},
$$
then, $K$ has the form: 
$$K=-\frac{M+N}{2(a+\tilde{\phi}(a^2-x_3^2))^3},$$ 
where
\[
N:=-(2a^2+2a\tilde{\phi}(a^2-x_3^2)+8a\tilde{\phi}x_3^2)
\]
and where
\begin{align*}
M:&=2(\partial_{x_3} \tilde{\phi})^2(a^2-x_3^2)^3+8a x_3 (\partial_{x_3} \tilde{\phi})(a^2-x_3^2)\\
&\hspace{10pt}-a(\partial^2_{x_3} \tilde{\phi})(a^2-x_3^2)^2-\tilde{\phi}(\partial^2_{x_3} \tilde{\phi})(a^2-x_3^2)^3\\
:&=(I)+(II)+(III)+(IV).
\end{align*}
\end{lemma}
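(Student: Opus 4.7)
The plan is to compute the induced metric on $N$, identify it as a surface-of-revolution metric, apply the standard $K = -h''/h$ formula, and then substitute the explicit form of $\phi|_\gamma$.

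Parametrize $\gamma$ by $t \mapsto (0,0,t)$ with $t \in (-a,a)$. From $g = \phi^{-1}\eta^2 + \phi g_{\mathbb{R}^3}$ and the fact that the horizontal lift of $\partial_{x_3}$ is $g$-orthogonal to the fibre direction $\xi$, the induced metric on $N$ in coordinates $(x_3,\theta)$ adapted to the $S^1$-action is
\[
g_N = \psi(x_3)\, dx_3^2 + \psi(x_3)^{-1}\, d\theta^2, \qquad \psi(x_3) := \phi(0,0,x_3).
\]
Changing to the arclength parameter $s$ with $ds = \sqrt{\psi}\, dx_3$, this becomes the surface-of-revolution metric $ds^2 + h(s)^2 d\theta^2$ with $h = \psi^{-1/2}$. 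The classical identity $K = -h''(s)/h(s)$, combined with the chain rule $\tfrac{d}{ds} = \psi^{-1/2}\tfrac{d}{dx_3}$, gives after a short computation
\[
K = \tfrac{1}{2}\psi^{-2}\psi'' - \psi^{-3}(\psi')^2 = -\frac{\partial^2}{\partial x_3^2}\!\left(\frac{1}{2\phi}\right),
\]
establishing the first part of the lemma.

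For the explicit expansion, along $\gamma$ we have $|x - p_\pm|_{\mathbb{R}^3} = a \mp x_3$, so
\[
\phi|_\gamma \,=\, \tilde\phi + \frac{1}{2(a-x_3)} + \frac{1}{2(a+x_3)} \,=\, \tilde\phi + \frac{a}{a^2-x_3^2}.
\]
Setting $u := a^2 - x_3^2$ and $v := \tilde\phi\, u + a$, so that $1/(2\phi|_\gamma) = u/(2v)$, two applications of the quotient rule (using $u' = -2x_3$, $u'' = -2$) yield
\[
K \,=\, -\frac{\partial^2}{\partial x_3^2}\!\left(\frac{u}{2v}\right) \,=\, \frac{2v^2 + u v v'' - 4 x_3 v v' - 2 u (v')^2}{2 v^3}.
\]
Expanding $v' = \tilde\phi'\, u - 2 x_3 \tilde\phi$ and $v'' = \tilde\phi''\, u - 4 x_3 \tilde\phi' - 2\tilde\phi$ and grouping by powers of $u$ produces the decomposition $K = -(M+N)/(2v^3)$ claimed in the statement.

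The main obstacle is the bookkeeping in this last expansion: at order $u^2$ the contributions $4x_3\tilde\phi\tilde\phi'$ and $\tilde\phi^2$ from $-uvv''$ and $2u(v')^2$ must be shown to cancel against those coming from $4x_3 v v'$ and $-2v^2$, and similarly the $x_3^2\tilde\phi^2 u$ terms cancel at order $u$. What remains reassembles exactly as $M$ (the part carrying $\tilde\phi'$ and $\tilde\phi''$) plus $N$ (the remaining polynomial in $a$, $x_3$ and $\tilde\phi$). No further geometric input is needed; once the warped-product identity and the restriction $\phi|_\gamma = \tilde\phi + a/(a^2-x_3^2)$ are in hand, the conclusion is forced by a direct, if delicate, computation.
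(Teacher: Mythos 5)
Your proof is correct, and I verified the algebra in full: with $u=a^2-x_3^2$, $v=a+\tilde\phi u$, $v'=\tilde\phi'u-2x_3\tilde\phi$, $v''=\tilde\phi''u-4x_3\tilde\phi'-2\tilde\phi$, the numerator identity
\[
-\left(2v^2+uvv''-4x_3vv'-2u(v')^2\right)=M+N
\]
does hold: the $u^2$-coefficients $\pm2\tilde\phi^2$ and $(-4-4+8)x_3\tilde\phi\tilde\phi'$ cancel leaving $a\tilde\phi''u^2$, the $\pm8x_3^2\tilde\phi^2u$ terms cancel at order $u$, and the survivors reassemble exactly into $(I)$--$(IV)$ and $N$. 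Your route is genuinely different in packaging from the paper's: the paper gives no computation at all, deferring to Lotay--Oliveira's moving-frames argument (``Cartan structure equations and a direct computation''), whereas you work intrinsically, identifying the induced metric $g_N=\phi^{-1}\eta^2|_N+\phi\,dx_3^2$ as a surface-of-revolution metric and invoking the classical $K=-h''(s)/h(s)$ with $h=\psi^{-1/2}$; since Gaussian curvature is intrinsic this is legitimate, and it buys a self-contained, elementary proof in which the substitution $1/(2\phi)|_\gamma=u/(2v)$ makes the bookkeeping behind the $M+N$ decomposition explicit rather than hidden in a citation. Two small points deserve a line each in a final write-up: (i) the identification $\eta|_N=d\theta$ requires $d\eta|_N=0$, which follows because $d\eta=\pi^*(\ast_{\mathbb{R}^3}d\phi)$ and every tangent vector to $N$ projects into the one-dimensional span of $\dot\gamma$, so the pulled-back $2$-form vanishes on $N$; and (ii) the lemma's $\partial^2_{x_3}\left(1/(2\phi)\right)$ is the ambient partial derivative restricted to $\gamma$, which coincides with the derivative of the restriction $\psi(x_3)=\phi(0,0,x_3)$ that you actually differentiate, since $\partial_{x_3}$ differentiates along the axis itself.
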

\begin{proof}
This follows from Cartan structure equations and a direct computation. 
\end{proof}

\begin{figure}[t]
\begin{tikzpicture}[scale=0.9]
\fill[black] (-0.6,0) circle (0.05cm) node[black,left] {$p_1$};
\fill[black] (0.6,0) circle (0.05cm) node[black,right] {$p_2$};
\fill[black] (0,0) circle (0.05cm) node[black,below] {$q$};
\draw[thick] (-0.6,0) -- (0.6,0);

\draw[blue,thick] (0,0) circle (2cm);
\draw[thick, blue, dashed] (2,0) arc (0:180:2cm and 0.5 cm);
\draw[blue,thick] (-2,0) arc (180:360:2cm and 0.5 cm);
\draw[red, thick, dashed, rotate=45] (0,0)--(2,0);
\node at (2.2,1.57) [red] {$(s+1)a$};

\fill[black] (-2.3,1.2) circle (0.05cm) node[black,right] {$p_3$};
\fill[black] (0,-2.4) circle (0.05cm) node[black,right] {$p_4$};
\fill[black] (2.3,-0.3) circle (0.05cm) node[black,right] {$p_5$};
\end{tikzpicture}
\caption{Example of distribution of points satisfying the condition given in Proposition \ref{strong stability GH}.}
\end{figure}
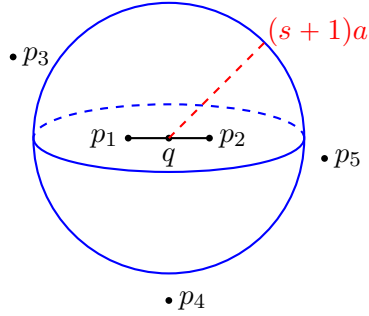

We can use Lemma \ref{GaussianCurvature} to prove Proposition \ref{strong stability GH}.
\begin{proof}[Proof of Proposition \ref{strong stability GH}]
By Proposition \ref{Strong Stability in HK}, it is enough to show that $N$ has positive Gaussian curvature. Moreover, Lemma \ref{GaussianCurvature} implies that it is equivalent to the condition: $$M+N<0.$$ Note that $N$ has always the right sign, so we just need to control the terms of $M$: $(I)$, $(II)$, $(III)$ and $(IV)$. 

Letting $r_l:=\av{x-p_l}_{\mathbb{R}^3}$, we have $\tilde{\phi}=m+\sum_{l=3}^k \frac{1}{2r_l}$,
$\partial_{x_3}\tilde{\phi}=\sum_{l=3}^{k}\frac{(p_l)_3-x_3}{2r_l^3}$ 
and $\partial^2_{x_3} \tilde{\phi}=\sum_{l=3}^{k} \frac{1}{r_l^3}-\frac{3}{2}\sum_{l=3}^{k} \frac{(p_l)_1^2+(p_l)_2^2}{r_l^5}$. Since $$r_l=\sqrt{(p_l)_1^2+(p_l)_2^2+((p_l)_3-x_3)^2}\geq \av{(p_l)_3-x_3}$$ on $\gamma$, we deduce that 
\[
\av{\partial_{x_3}\tilde{\phi}}_{\mathbb{R}^3}\leq \sum_{l=3}^k \frac{1}{2r^2_l}; \hspace{20pt} \partial^2_{x_3} \tilde{\phi}\geq -\sum_{l=3}^k \frac{1}{2r_l^3}.
\]

Defining $b:=\min_{l\geq3} r_l$, it is clear that $b\leq r_l$ for all $l>2$ and hence, $1/r_l\leq 1/b$. Now, we have the obvious estimates:
\begin{align*}
    (I)&\leq 2\left(\frac{k-2}{2b^2}\right)^2 a^6;\\
    (II)&\leq 8\frac{a^2}{b} \left(\sum_{l=3}^k \frac{1}{2r_l}\right)(a^2-x_3^2)\leq8\frac{a^2}{b} \tilde{\phi}(a^2-x_3^2);\\
    (III)&\leq a\left(\sum_{l=3}^k \frac{1}{2r_l^3} \right)a^2 (a^2-x_3^2)\leq \frac{a^3}{b^2}\tilde{\phi}(a^2-x_3^2);\\
    (IV)&\leq \tilde{\phi} \left(\sum_{l=3}^k \frac{1}{2r_l^3}\right) a^4(a^2-x_3^2)\leq \frac{k-2}{2b^3}a^4 \tilde{\phi}(a^2-x_3^2).
\end{align*}
Triangle inequality, together with the conditions on the Euclidean distance from $q$ to $p_i$, gives $b> sa$. Combining it with the previous estimates for $(I), (II), (III), (IV)$, we obtain: 

\begin{align*}
    (I)&< 2\left(\frac{k-2}{2s^2}\right)^2 a^2; \hspace{20pt} (II)<\frac{8}{s} a\tilde{\phi}(a^2-x_3^2);\\
    (III)& < \frac{1}{s^2}a\tilde{\phi}(a^2-x_3^2); \hspace {19pt} (IV)< \frac{k-2}{2s^3}a \tilde{\phi}(a^2-x_3^2).
\end{align*}
Under the assumptions on $s$, it is immediate to see that $(I)-2a^2<0$ and that $(II)+(III)+(IV)-2a\tilde{\phi} (a^2-x_3^2)<0$. We conclude that $M+N<0$.
\end{proof}

\begin{corollary}
Let $(X,g)$ be a multi-Eguchi--Hanson or a multi-Taub--NUT space and let $N$ be a compact $S^1$-invariant minimal surface in $(X,g)$. If $(X,g)$ and $N$ satisfy the conditions of Proposition \ref{strong stability GH}, then, $N$ is the only compact minimal submanifold (compactly supported stationary integral varifold) of dimension at least $2$ in a neighbourhood of $N$.
\end{corollary}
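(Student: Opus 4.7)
The plan is to combine Proposition \ref{strong stability GH} with the local convexity statement of Proposition \ref{TW local uniqueness}. Under the hypotheses of the corollary, Proposition \ref{strong stability GH} gives that $N$ is strongly stable, so Proposition \ref{TW local uniqueness} produces an open neighbourhood $V$ of $N$ on which the square of the distance function $\psi$ from $N$ is $2$-convex, and strictly so on $V\setminus N$. Since $2$-convexity implies $k$-convexity for every $k\geq 2$, this gives a strict $k$-convex barrier (in the sense of Section \ref{SectionBarrier}) around $N$ for all the dimensions of interest.

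For the smooth part of the statement, given any orientable compact minimal submanifold $\Sigma\subset V$ of dimension $k\geq 2$, I would apply Lemma \ref{kconvrestr} to $\psi|_\Sigma$ to force $\Sigma$ to lie in the set where $\psi$ fails to be strictly $k$-convex, namely $N$ itself. Since $\dim\Sigma\geq 2$ while $\dim N=2$, this forces $\dim\Sigma=2$; compactness, together with the fact that $N$ is a connected topological sphere by Corollary \ref{Strongly Stable surfaces are Spheres}, then gives $\Sigma=N$. The orientability hypothesis can either be read off from Lemma \ref{kconvrestr} as stated, or bypassed by passing to the orientable double cover, or absorbed into the varifold argument below.

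For the integral varifold case, after possibly shrinking $V$, I would take the sublevel sets $\{\psi<\eta\}$ for small $\eta>0$ as the relevant domains: by Remark \ref{LevelSetsConvexFunctions} combined with strict $2$-convexity of $\psi$ on $V\setminus N$, each level set $\{\psi=\eta\}$ is a smooth strictly $2$-convex (hence strictly $k$-convex for $k\geq 2$) hypersurface in $V$. Given a compactly supported stationary integral $k$-varifold with support $T$ contained in a small neighbourhood of $N$, if $T\not\subseteq N$ then $\eta:=\max_T\psi>0$ and $T$ touches $\{\psi=\eta\}$ from the inside of $\{\psi<\eta\}$, contradicting Corollary \ref{BarrierGMT} applied to the domain $\{\psi<\eta\}$. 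Hence $T\subseteq N$, and the constancy theorem identifies the varifold with a positive integer multiple of $N$.

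The main obstacle is the bookkeeping of nested neighbourhoods: one must choose $V$ small enough that $\psi$ is smooth on $V$ (i.e.\ inside the tubular neighbourhood on which the normal exponential map is a diffeomorphism), all level sets $\{\psi=\eta\}$ for $0<\eta\leq\eta_0$ are smooth regular hypersurfaces, and the $k$-convex function furnished by Theorem \ref{boundarytointerior} can be constructed on each such sublevel set to feed into Corollary \ref{BarrierGMT}. This is routine once $V$ lies inside a genuine tubular neighbourhood of $N$, so the remaining content of the argument is essentially the interplay between Proposition \ref{strong stability GH}, Proposition \ref{TW local uniqueness}, and the barrier principles of Section \ref{SectionBarrier}.
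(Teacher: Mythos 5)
Your proposal is correct and takes essentially the same route as the paper, whose proof consists precisely of deducing strong stability from Proposition \ref{strong stability GH}, invoking Proposition \ref{TW local uniqueness} for the local $2$-convexity (hence $k$-convexity for $k\geq 2$) of the squared distance function, and then applying ``the usual barrier argument'' of Section \ref{SectionBarrier}. Your write-up merely spells out what the paper leaves implicit: the smooth case via Lemma \ref{kconvrestr} and the varifold case via the maximum of $\psi$ on the support, the strict convexity of the level sets (Remark \ref{LevelSetsConvexFunctions}), and Corollary \ref{BarrierGMT}.
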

\begin{proof}
The local uniqueness follows from Proposition \ref{TW local uniqueness} and the usual barrier argument.
\end{proof}

\begin{remark}
Since the real root of $-4x^3+16x^2+2x+(k-2)$ is strictly greater than 4, Proposition \ref{strong stability GH} is weaker than \cite[Proposition A.1]{LO20} in the collinear case. 
\end{remark}

\begin{proposition} 
There is no distribution of 3 or more points for which the condition of Proposition \ref{strong stability GH} is satisfied by all compact $S^1$-invariant minimal surfaces.
\end{proposition}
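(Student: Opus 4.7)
The plan is to fix any three distinct singular points $p_1, p_2, p_3$ of $\phi$ and derive a contradiction from the assumption that all three compact $S^1$-invariant minimal surfaces corresponding to the pairs $(1,2)$, $(1,3)$, $(2,3)$ satisfy the condition of Proposition \ref{strong stability GH}. Write $q_{ij}$ for the Euclidean midpoint of $p_i, p_j$ and $a_{ij}:=\av{p_i-p_j}_{\mathbb{R}^3}/2$, so that the condition for the pair $(i,j)$ reads $\av{q_{ij}-p_l}_{\mathbb{R}^3} > (s+1)a_{ij}$ for every $l\neq i,j$ and some admissible $s\geq R_k$.

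The key algebraic preliminary is that $R_k>4$ for every $k\geq 3$: evaluating the cubic $-4x^3+16x^2+2x+(k-2)$ at $x=4$ gives $8+(k-2)\geq 9>0$, and since the leading coefficient is negative, its unique real root lies strictly above $4$. Hence $s>4$ in all admissible instances. Applied to the pair $(1,3)$ with $l=2$, the hypothesis gives $\av{q_{13}-p_2}_{\mathbb{R}^3} > (s+1)a_{13}$, whereas the Euclidean triangle inequality gives $\av{q_{13}-p_2}_{\mathbb{R}^3}\leq \av{q_{13}-p_1}_{\mathbb{R}^3}+\av{p_1-p_2}_{\mathbb{R}^3} = a_{13}+2a_{12}$. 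Combining the two yields $2a_{12}>sa_{13}>4a_{13}$, i.e.\ $a_{12}>2a_{13}$. Running the identical argument on the pair $(2,3)$ with $l=1$ gives $a_{12}>2a_{23}$.

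The last step is to apply the Euclidean triangle inequality to $p_1,p_2,p_3$, which reads $2a_{12}=\av{p_1-p_2}_{\mathbb{R}^3}\leq \av{p_1-p_3}_{\mathbb{R}^3}+\av{p_2-p_3}_{\mathbb{R}^3}=2a_{13}+2a_{23}$. Substituting the two strict bounds obtained above produces $2a_{12}<a_{12}+a_{12}=2a_{12}$, which is the desired contradiction. There is no serious obstacle here: the entire content of the proof is the observation that the threshold $s>4$ forced by the definition of $R_k$ is already strong enough to make the hypothesis incompatible with the triangle inequality on any triple of singular points, so no assumption on the global geometry of the configuration is required.
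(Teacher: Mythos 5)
Your proof is correct and takes essentially the same approach as the paper: the paper likewise combines the fact that $R_k>4$ (recorded in the remark following the proposition) with the Euclidean triangle inequality, showing that if the midpoint condition holds for one pair among a triple of points then it must fail for another pair. Your version simply makes explicit the step the paper dismisses as ``an easy application of triangle inequality,'' phrasing it as a direct contradiction between the conditions for the pairs $(1,3)$ and $(2,3)$ rather than as one condition implying the negation of another.
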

\begin{proof}
It is enough to show that, given 3 points $\{p_1, p_2,p_3\}\subset\mathbb{R}^3$ such that $d_{\mathbb{R}^3} (\frac{p_1+p_2}{2},p_3)>4\frac{\av{p_1-p_2}_{\mathbb{R}^3}}{2}$, then $d_{\mathbb{R}^3} (\frac{p_2+p_3}{2},p_1)<4\frac{\av{p_2-p_3}_{\mathbb{R}^3}}{2}$. This is an easy application of triangle inequality (see Figure \ref{Proof Proposition no distrib}). 
\end{proof}
\begin{remark}
The same holds if we consider \cite[Proposition A.1]{LO20} instead.
\end{remark}

\begin{figure}[t]
\begin{tikzpicture}[scale=0.75]
\draw[thick,red, rotate=-15] (1.5,0) arc (0:81:1.5cm);
\draw[thick,red, rotate=68.5] (1.5,0) arc (0:8.5:1.5cm);
\draw[thick,red, rotate=79.5] (1.5,0) arc (0:10.5:1.5cm);
\draw[thick,red, rotate=93.5] (1.5,0) arc (0:235:1.5cm);
\draw[thick,blue] (0.35,1.15) circle (3cm);

\draw[red, rotate=-45, dashed] (0,0)--(1.5,0);

\node[scale=0.9] at (1.7,-0.51) [red]{$2\lvert p_1 -p_2\rvert$};
\draw[blue, rotate around={45:(0.35,1.15)}, dashed] (0.35,1.15)--(3.35,1.15);
\node[scale=0.9] at (2.25,1.8) [blue]{$2\lvert p_2 -p_3\rvert$};

\draw[thick, green] (-0.5,0) -- (0.5,0);
\draw[thick, green] (0.5,0) -- (0.2,2.3);
\draw[thick, green] (-0.5,0) -- (0.2,2.3);

\fill[black] (-0.5,0) circle (0.05cm) node[black,left] {$p_1$};
\fill[black] (0.5,0) circle (0.05cm) node[black,right] {$p_2$};
\fill[black] (0.2,2.3) circle (0.05cm) node[black,above] {$p_3$};
\fill[black] (0,0) circle (0.05cm) node[black,below] {$q$};
\fill[black] (0.35,1.15) circle (0.05cm) node[black,right] {$r$};
\end{tikzpicture}
\caption{\hspace{5pt}}\label{Proof Proposition no distrib}
\end{figure}

Finally, we use once again Lemma \ref{GaussianCurvature} to prove Proposition \ref{CounterexampleSS}.

\begin{proof}[Proof of Proposition \ref{CounterexampleSS}]
By Lemma \ref{GaussianCurvature}, a direct computation yields: 
\[
(M+N)(p)=-2a^2-2a^3m-\frac{a^3}{\epsilon}+\frac{a^5}{2\epsilon^3}+\frac{ma^6}{2\epsilon^3}+\frac{a^6}{4\epsilon^4},
\]
for all $p\in\pi^{-1}(0)$.

As $\epsilon\xrightarrow[]{}0$, the leading term of $(M+N)(p)$ is $\frac{a^6}{4}>0$. Then, for $\epsilon$ small enough, $(M+N)(p)>0$ and so $K(p)<0$.

Analogously, as $a\xrightarrow[]{}+\infty$, the leading term is $\frac{m}{2\epsilon^3}+ \frac{1}{4\epsilon^4}>0$. Then, for $a$ big enough, $(M+N)(p)>0$ and so $K(p)<0$.
\end{proof}

This result, together with Proposition \ref{Final converse Strong Stability}, implies that any function that locally looks like the distance function, or a function of the distance function, cannot be $2$-convex in this setting. 
Since in all examples where the barrier method is used we only have dependence on the distance function \cite{TW, TW1, TW18}, we have shown that the natural local theory does not work.

            			\bibliographystyle{plain}

\begin{thebibliography}{Hala12}		

\bibitem[Bie99]{Bie99}{\scshape Bielawski, R.}, Complete hyper-Kähler 4n-manifolds with a local tri-Hamiltonian $\R^n$-action, {\em Math. Ann.} {\bf{314}} (1999), no. 3, 505-528. \mrev{1704547}, \zbl{0952.53024}.

%\bibitem[BY73]{BY73}{\scshape Bourguignon, J.-P.; Yau, S.-T.}, Sur les m\'etriques riemanniennes \'a courbure de Ricci nulle sur le quotient d'une surface K3, {\em C. R. Acad. Sci. Paris Ser. A-B} {\bf{277}} (1973), A1175-A1177. \mrev{334079}, \zbl{0266.53037}.

\bibitem[GH78]{GH78}{\scshape Gibbons, G.; Hawking, S.}, Gravitational multi-instantons, {\em Phys. Lett.} {\bf{78B}} (1978), no. 4, 430-432.

\bibitem[GW00]{GW00}{\scshape Gross, M.; Wilson, P. M. H.}, Large complex structure limits of K3 surfaces, {\em J. Differential Geom.} {\bf{55}} (2000), no. 3, 475-546. \mrev{1863732}, \zbl{1027.32021}.

\bibitem[HaLa12]{HaLa12}{\scshape Harvey, F. R.; Lawson, H.B.}, Geometric plurisubharmonicity and convexity: an introduction, {\em Adv. Math.}, {\bf{230}} (2012), no. 4-6, 2428-2456. \mrev{2927376}, \zbl{1251.31003}.

\bibitem[HsLa71]{HsLa71}{\scshape Hsiang, W.; Lawson, H.B.}, Minimal submanifolds of low cohomogeneity, {\em J. Differential Geom.} {\bf{5}} (1971), no. 1-2, 1-38. \mrev{298593}, \zbl{0219.53045}.

\bibitem[KM05]{KM05}{\scshape Karigiannis, S.; Min-Oo, M.}, Calibrated subbundles in noncompact manifolds of special holonomy, {\em Ann. Global Anal. Geom.} {\bf{28}} (2005), 371-394. \mrev{2199999}, \zbl{1093.53054}.

\bibitem[LO20]{LO20}{\scshape Lotay, J. D.; Oliveira, G.}, Special Lagrangians, Lagrangian mean curvature flow and the Gibbons-Hawking ansatz, {\em J. Differential Geom.}, to appear.

\bibitem[LoSu20]{LS20}{\scshape Lotay, J. D.; Schulze, F.}, Consequences of strong stability of minimal submanifolds, {\em Int. Math. Res. Not.} (2020), 2352-2360. \mrev{4090742}, \zbl{1437.53072}.

\bibitem[MW93]{MW93}{\scshape Micallef, M. J.; Wolfson, J. G.}, The second variation of area of minimal surfaces in four-manifolds,  {\em Math. Ann.} {\bf{295}} (1993), 245-268.\mrev{1202392}, \zbl{0788.58016}.

\bibitem[Oli]{Oli}{\scshape Oliveira, G.}, Locating closed geodesics on Foscolo's K3 surfaces, {\em preprint}.

\bibitem[Pr79]{Pr79}{\scshape Prasad, M. K.}, Equivalence of Eguchi-Hanson metric to two center Gibbons-Hawking metric, {\em Phys. Lett.} {\bf{83B}} (1979), no. 3-4, 310.

\bibitem[Sim68]{Sim68}{\scshape Simons, J.}, Minimal varieties in Riemannian manifolds, {\em Annals of Math.} {\bf{88}} (1968), no. 1, pp. 62-105. \mrev{233295}, \zbl{0181.49702}.

\bibitem[TW18]{TW18}{\scshape Tsai, C.-J.; Wang, M.-T.}, Mean curvature flow in manifolds of special holonomy, {\em J. Differential Geom.} {\bf{108}} (2018), no. 3, 531-569. \mrev{3770850}, \zbl{1385.53061}.

\bibitem[TW]{TW}{\scshape Tsai, C.-J.; Wang, M.-T.}, A strong stability condition on minimal submanifolds and its implications, {\em J. Reine Angew. Math.}, {\bf 764} (2020),111-156. \mrev{4116634}, \zbl{07225383}.

\bibitem[TW1]{TW1}{\scshape Tsai, C.-J.; Wang, M.-T.}, Global uniqueness of the minimal sphere in the Atiyah--Hitchin manifold, {\em Preprint: \url{https://arxiv.org/abs/2002.10391}} (2018).	


\bibitem[Wh15]{Wh15}{\scshape White, B.}, Topics in mean curvature flow, notes by O. Chodosh, {\em available at \url{http://web.stanford.edu/~ochodosh/MCFnotes.pdf}} (2015).	

	
            	 \end{thebibliography}

\end{document}